\documentclass[11pt,a4paper]{article}
\usepackage{amsmath}
\usepackage{amssymb}
\usepackage{amsfonts}
\usepackage{amsthm}
\usepackage{relsize}
\usepackage{setspace}
\usepackage{geometry}
\usepackage{enumerate}
\usepackage{url}

\geometry{includehead, includefoot, left=20mm, right=20mm, top=20mm, bottom=20mm}

\newtheorem{thm}{Theorem}[section]
\newtheorem{lem}[thm]{Lemma}
\newtheorem{prop}[thm]{Proposition}
\newtheorem{cor}[thm]{Corollary}

\theoremstyle{definition}
\newtheorem{defn}[thm]{Definition}

\newtheorem{rem}[thm]{Remark}

\newcommand{\tdlc}{t.d.l.c$.$}

\newcommand{\Aut}{\mathrm{Aut}}

\newcommand{\F}{\mathrm{F}}
\newcommand{\M}{\mathrm{M}}
\newcommand{\N}{\mathrm{N}}
\renewcommand{\O}{\mathrm{O}}

\newcommand{\Out}{\mathrm{Out}}
\newcommand{\Inn}{\mathrm{Inn}}

\newcommand{\Sym}{\mathrm{Sym}}

\newcommand{\Comm}{\mathrm{Comm}}

\newcommand{\Sylp}{\mathrm{Syl}_p}
\newcommand{\Syls}{\mathrm{Syl}_*}

\newcommand{\piP}{p \in \mathbb{P}}

\newcommand{\bN}{\mathbb{N}}
\newcommand{\bP}{\mathbb{P}}
\newcommand{\bQ}{\mathbb{Q}}

\newcommand{\mcC}{\mathcal{C}}
\newcommand{\mcD}{\mathcal{D}}

\newcommand{\mcL}{\mathcal{L}}
\newcommand{\mcM}{\mathcal{M}}

\newcommand{\mcP}{\mathcal{P}}

\newcommand{\mcS}{\mathcal{S}}
\newcommand{\mcT}{\mathcal{T}}
\newcommand{\mcU}{\mathcal{U}}

\setlength{\parindent}{0pt}
\setlength{\parskip}{0pt}

\begin{document}

\title{Local Sylow theory of totally disconnected, locally compact groups}

\author{Colin D. Reid\\
Universit\'{e} catholique de Louvain\\
Institut de Recherche en Math\'{e}matiques et Physique (IRMP)\\
Chemin du Cyclotron 2, 1348 Louvain-la-Neuve\\
Belgium\\
colin@reidit.net}

\maketitle

\abstract{We define a local Sylow subgroup of a totally disconnected, locally compact (\tdlc{}) group $G$ to be a maximal pro-$p$ subgroup of an open compact subgroup of $G$.  We use these subgroups to define the $p$-localisation of $G$, a locally virtually pro-$p$ group which maps continuously and injectively to $G$ with dense image, and describe the relationship between the scale and modular functions of $G$ and those of its $p$-localisation.  In the case of locally virtually prosoluble groups, we consider all primes simultaneously using local Sylow bases.}

\section{Introduction}

The usual definition of Sylow subgroups for an arbitrary group is that they are maximal $p$-subgroups.  However, a much better-behaved notion in the class of profinite groups is that of maximal pro-$p$ subgroups; it is for these subgroups that one obtains a generalisation of Sylow's theorem, including its corollary that a (pro-)finite group is generated by its Sylow subgroups (provided one allows topological generation).  It is therefore natural to consider a corresponding `local' theory in totally disconnected, locally compact (\tdlc{}) groups, based on the maximal pro-$p$ subgroups of open compact subgroups.  The aim of this paper is to introduce this notion and to prove some basic results.

Throughout this paper, a property is said to hold `locally' in a \tdlc{} group $G$ if it holds for all open compact subgroups of $G$.

\begin{defn}Let $G$ be a \tdlc{} group and let $K$ be a compact subgroup of $G$.  We define the \emph{localisation} $G_{(K)}$ of $G$ at $K$ to be the topological group formed by endowing the commensurator $\Comm_G(K)$ of $G$ in $K$ with the coarsest group topology such that $K$ is open (which is also the finest group topology such that $K$ is compact).  Note that the inclusion map $G_{(K)} \rightarrow G$ is continuous.  Given a prime number $p$, define the \emph{$p$-localisation} $G_{(p)}$ to be $G_{(S)}$, where $S$ is a local $p$-Sylow subgroup, that is, a maximal pro-$p$ subgroup of an open compact subgroup of $G$.\end{defn}

The following result shows that we do not need to be concerned with the choice of $S$ in the definition of $p$-localisation, and that the $p$-localisation incorporates a large part of $G$ as a whole.

\begin{thm}\label{commdense}Let $G$ be a \tdlc{} group, let $p$ be a prime and let $S$ be a local $p$-Sylow subgroup of $G$.  Then $\Comm_G(S)$ is a dense subgroup of $G$, and the $G$-conjugacy class of $\Comm_G(S)$ and the isomorphism type of $G_{(p)}$ as a topological group do not depend on the choice of $S$.  There is an injective homomorphism from $\Out(G)$ to $\Out(G_{(p)})$ (where we only allow automorphisms that respect the topology).\end{thm}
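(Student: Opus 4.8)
The plan is to reduce all three assertions to the following lemma, which I would establish first: \emph{if $S_1, S_2$ are maximal pro-$p$ subgroups of compact open subgroups $U_1, U_2$ of $G$, then $S_1^w$ is commensurable with $S_2$ for some $w \in U_1 \cap U_2$.} Once this is known, two of the clauses drop out. Taking $S_1 = S$ and $S_2 = S'$ an arbitrary local $p$-Sylow subgroup, and using that commensurable subgroups have the same commensurator, $\Comm_G(S') = \Comm_G(S^w) = w^{-1}\Comm_G(S)w$, so the conjugacy class of $\Comm_G(S)$ is independent of $S$; moreover the localisation topology on $\Comm_G(S)$ is the coarsest group topology in which $S$, equivalently any finite-index subgroup of $S$, is open, so it is determined by the abstract group $\Comm_G(S)$ together with the commensurability class of $S$, and conjugation by $w$ is therefore a topological isomorphism $G_{(S')} \to G_{(S)}$.

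To prove the lemma I would put $W = U_1 \cap U_2$ and use repeatedly the elementary bound that for profinite $U' \le U$ with $[U:U']$ finite and any closed subgroup $H \le U$, the coset map $H/(H \cap U') \to U/U'$ is injective, so $[H : H \cap U'] \le [U:U'] < \infty$; in particular $S_i \cap W$ is commensurable with $S_i$. The point needing care is that $S_i \cap W$ need \emph{not} be a maximal pro-$p$ subgroup of $W$; but if $T$ is a maximal pro-$p$ subgroup of $W$ containing $S_1 \cap W$, then by Sylow's theorem for profinite groups $T$ lies inside a maximal pro-$p$ subgroup $S_1^x$ of $U_1$, and maximality of $T$ forces $T = S_1^x \cap W$. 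Hence $S_1 \cap W \le S_1 \cap S_1^x =: D$, so $[S_1 : D] < \infty$, and since $S_1^x$ is conjugate to $S_1$ a comparison in the finite quotients of $U_1$ (where conjugate subgroups have equal order) gives $[S_1^x : D] < \infty$ as well; then $S_1 \cap W = D \cap W$ and $[T : S_1 \cap W] = [S_1^x \cap W : D \cap W] \le [S_1^x : D] < \infty$. Thus $W$ has a maximal pro-$p$ subgroup $T$ commensurable with $S_1$, and similarly one, $T'$, commensurable with $S_2$; by Sylow's theorem $T' = T^w$ for some $w \in W$, and chaining commensurabilities gives $S_1^w$ commensurable with $S_2$.

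For density, fix $g \in G$ and a compact open subgroup $V$; put $U' = U \cap V$ and let $S'$ be a maximal pro-$p$ subgroup of $U'$ containing $S \cap U'$, so that $S'$ is commensurable with $S$ (by the same finite-index estimate) and $\Comm_G(S') = \Comm_G(S)$. Applying the lemma to the pairs $(U', S')$ and $(gU'g^{-1}, gS'g^{-1})$ produces $w \in U' \cap gU'g^{-1}$ with $(S')^w$ commensurable with $gS'g^{-1}$, whence $wg \in \Comm_G(S') = \Comm_G(S)$ and $wg \in gU' \subseteq gV$. So $\Comm_G(S) \cdot V = G$ for every compact open $V$, and since such $V$ form a neighbourhood basis of the identity in $G$, the subgroup $\Comm_G(S)$ is dense.

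Finally, for the map $\Out(G) \to \Out(G_{(p)})$: a bicontinuous automorphism $\phi$ of $G$ sends $S$ to a maximal pro-$p$ subgroup of the compact open subgroup $\phi(U)$, hence to another local $p$-Sylow subgroup, so by the lemma I may compose $\phi$ with an inner automorphism of $G$ to arrange that $\phi(S)$, and hence also $\phi^{-1}(S)$, is commensurable with $S$. Then $\phi(\Comm_G(S)) = \Comm_G(\phi(S)) = \Comm_G(S)$, and as $\phi^{\pm 1}$ carry finite-index subgroups of $S$ to compact subgroups commensurable with $S$, hence to open subgroups of $G_{(p)}$, the restriction of $\phi$ to $\Comm_G(S)$ is a topological automorphism of $G_{(p)}$. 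Two admissible choices of the inner twist differ by conjugation by an element $h$ with $S^h$ commensurable with $S$, i.e.\ $h \in \Comm_G(S) = G_{(p)}$, so $[\phi] \mapsto [\phi|_{\Comm_G(S)}]$ is a well-defined homomorphism; it is injective, since if $\phi|_{\Comm_G(S)}$ is inner in $G_{(p)}$ then, after one more inner twist, $\phi$ fixes the dense subgroup $\Comm_G(S)$ pointwise and hence — $G$ being Hausdorff — equals the identity, so $\phi$ was inner in $G$. The only real obstacle is the lemma, and within it the finite-index bound $[T : S \cap W] < \infty$; the rest is formal bookkeeping.
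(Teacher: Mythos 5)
Your proof is correct, and at its core it runs on the same engine as the paper's: the profinite Sylow theorem (conjugacy of maximal pro-$p$ subgroups, every pro-$p$ subgroup lies in one) plus finite-index bookkeeping gives conjugacy-up-to-commensuration of local $p$-Sylow subgroups inside any common open compact subgroup, from which density, the conjugacy of commensurators, the topological isomorphism $G_{(S')} \cong G_{(S)}$ (via conjugation), and the injectivity of $\Out(G) \rightarrow \Out(G_{(p)})$ (via density of $\Comm_G(S)$ and closedness of agreement sets in a Hausdorff group) all follow exactly as in the paper. The two organisational differences are worth noting. First, where you prove a concrete commensuration lemma directly (including the careful step that a maximal pro-$p$ subgroup of $U_1 \cap U_2$ containing $S_1 \cap (U_1\cap U_2)$ is commensurate to $S_1$ --- your finite-quotient comparison there is sound), the paper packages the same content into the general formalism of ``locally conjugate structures'' (Lemma 2.5, Proposition 2.6, Lemma 3.2), which it then reuses verbatim for Sylow bases of prosoluble groups later; your version is more self-contained but less reusable. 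Second, to find a representative of an outer class that commensurates $S$, you apply your lemma directly to $\phi(S)$, which is again a local $p$-Sylow subgroup; the paper instead embeds everything in the holomorph $G \rtimes \Aut(G)$ and invokes density of $\Comm_H(S)$ there --- the two devices are interchangeable, and yours is arguably the more elementary. Only cosmetic points: ``any finite-index subgroup of $S$'' in your topology discussion should read ``any open (equivalently closed) finite-index subgroup'', and the homomorphism property of $[\phi] \mapsto [\phi|_{\Comm_G(S)}]$ deserves the one-line check (if $\phi_1(S)$ and $\phi_2(S)$ are commensurate to $S$ then so is $\phi_1\phi_2(S)$) that the paper spells out.
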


Some properties of $G_{(p)}$ and the embedding $G_{(p)} \rightarrow G$ are closely related to questions about the extent to which a $p$-Sylow subgroup of an open compact subgroup $U$ fails to be normal in $U$.

\begin{defn}Let $U$ be a profinite group.  Define the \emph{$p$-core} $\O_p(U)$ to be the unique largest normal pro-$p$ subgroup of $U$.  Say $U$ is \emph{$p$-normal} if $\O_p(U)$ is the $p$-Sylow subgroup of $U$.\end{defn}

\begin{thm}\label{pnormthm}Let $G$ be a \tdlc{} group, let $p$ be a prime and let $S$ be a local $p$-Sylow subgroup of $G$.
\begin{enumerate}[(i)]
\item Suppose $G_{(p)}$ has an open compact normal subgroup.  Then $G$ is locally virtually $p$-normal.
\item Let $H$ be a \tdlc{} group that is locally virtually $p$-normal.  Let $\phi: G \rightarrow H$ be a continuous homomorphism.  Then $\phi(\Comm_G(S)) = \phi(G)$.
\item The commensurator of $S$ contains the $p$-core of every open compact subgroup of $G$.
\item Suppose that $G$ is first-countable and that there is an open compact subgroup $U$ of $G$ that has only finitely many maximal open normal subgroups.  Then the following are equivalent:
\begin{enumerate}[(a)]
\item $G$ is locally virtually $p$-normal;
\item $\Comm_G(S)=G$;
\item $\Comm_G(S)$ contains a dense normal subgroup of $G$.\end{enumerate}\end{enumerate}\end{thm}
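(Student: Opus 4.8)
The common engine is a Sylow-intersection lemma which I would prove first from Sylow theory for profinite groups: for every local $p$-Sylow $S$ of $G$ and every open compact subgroup $U$, some $p$-Sylow $P$ of $U$ is commensurate with $S$ (that is, $S\cap P$ is open in each), so that $\Comm_G(P)=\Comm_G(S)$. Writing $S$ as a maximal pro-$p$ subgroup of an open compact $V$, one checks that $S\cap(U\cap V)$ has finite index in some $p$-Sylow of the open subgroup $U\cap V$ of $V$ — a short supernatural-index computation using $[V:U\cap V]<\infty$ — and then transports this to a $p$-Sylow of $U$ by Sylow conjugacy. I would also record the elementary facts that all open compact subgroups of a \tdlc{} group are commensurable and that virtual $p$-normality is a commensurability invariant among profinite groups, so that $G$ is locally virtually $p$-normal if and only if one open compact subgroup is; and that a profinite group $U$ is virtually $p$-normal once a $p$-Sylow has finite index over $\O_p(U)$ (pass to $U/\O_p(U)$, which then has finite $p$-Sylow, and take an open normal subgroup meeting it trivially).

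Granting the lemma, part (iii) is immediate: for an open compact $U$ take such a $P$; then $\O_p(U)\le P$ because a normal pro-$p$ subgroup lies in every $p$-Sylow, and $P\le\N_G(P)\le\Comm_G(P)=\Comm_G(S)$. For part (i): let $N\unlhd G_{(p)}=\Comm_G(S)$ be open compact. Since $N\cap S$ is an open pro-$p$ subgroup of the profinite group $N$, the subgroup $\O_p(N)$ is open (and characteristic) in $N$, so replacing $N$ by $\O_p(N)$ we may take $N$ pro-$p$. As $\Comm_G(S)$ is dense in $G$ by Theorem \ref{commdense}, $G$ normalises $N$, so $N\unlhd G$; then for any open compact $U$ the group $UN$ is open compact and contains $N$, and since a finite-index overgroup of a virtually $p$-normal profinite group is virtually $p$-normal, it is enough to prove $UN$ virtually $p$-normal, i.e. we may assume $N\le U$. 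Then $N\le\O_p(U)$; and $N$, being open in $\Comm_G(S)$, contains an open subgroup $S'$ of $S$, so $S'\le\O_p(U)\le P$ with $[P:S']<\infty$, whence $[P:\O_p(U)]<\infty$ and $U$ is virtually $p$-normal.

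For part (ii) I would first observe that if a \tdlc{} group $H$ is locally virtually $p$-normal then $\Comm_H(T)=H$ for a local $p$-Sylow $T$: $H$ has an open compact $p$-normal subgroup $W$, its $p$-Sylow $P=\O_p(W)$ is commensurate with $T$ by the lemma, so $W\le\N_H(P)\le\Comm_H(P)=\Comm_H(T)$, and a dense (Theorem \ref{commdense}) subgroup containing the open subgroup $W$ is everything. Next, $\overline G:=G/\ker\phi$ is locally virtually $p$-normal, because its open compact subgroups, being compact, embed topologically into $H$ (a continuous bijection between compact groups is a homeomorphism) with image a compact subgroup of $H$, and compact subgroups of $H$ are virtually $p$-normal, being closed subgroups of virtually-$p$-normal open compact subgroups; moreover the image $\overline S$ of $S$ is a local $p$-Sylow of $\overline G$, since images of $p$-Sylows under quotient maps of profinite groups are $p$-Sylows. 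Applying the first observation to $\overline G$ gives $\Comm_{\overline G}(\overline S)=\overline G$, i.e. $\Comm_G(S\ker\phi)=G$. What remains is to upgrade this to $\Comm_G(S)\cdot\ker\phi=G$, which is exactly $\phi(\Comm_G(S))=\phi(G)$; this is a Sylow-lifting statement — an element conjugating $S\ker\phi$ into commensurability with itself must be correctable by a $\ker\phi$-element so as to conjugate $S$ into commensurability with itself — and I expect it to be the main obstacle in this part.

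For part (iv): (a)$\Rightarrow$(b) is the first observation of the previous paragraph applied to $G$, and (b)$\Rightarrow$(c) is trivial. For (c)$\Rightarrow$(a), use Theorem \ref{commdense} to take $S$ to be a $p$-Sylow of the distinguished $U$; a dense normal subgroup of $G$ inside $\Comm_G(S)$ lies in $\Core_G(\Comm_G(S))$, so intersecting with $U$ gives a dense normal $D\unlhd U$ with $D\le\Comm_U(S)$. By the commensurability remark it suffices to show $U$ is virtually $p$-normal, and by the facts above this follows once $N:=\bigcap_{d\in D}dSd^{-1}$ — a closed normal pro-$p$ subgroup of $U$ contained in $S$ — has finite index in $S$. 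Proving $[S:N]<\infty$ is where the hypotheses are needed: first-countability supplies a descending chain of open normal subgroups of $U$ with trivial intersection, and the finitely many maximal open normal subgroups of $U$ bound, along this chain, the conjugacy behaviour of $S$ modulo the successive normal subgroups, after which density of $D$ forces it to stabilise. This limiting argument is the crux.
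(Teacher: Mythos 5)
Your arguments for (i) and (iii) are sound and essentially the paper's own (the paper's (i) is even slightly shorter: it does not need the reduction to $N\le U$, since $U\cap N\le\O_p(U)$ together with $U\cap N$ being commensurate to $S$ already gives the conclusion). The genuine problem is that in both (ii) and (iv) the step you defer is precisely the content of the paper's proof. In (ii), after reducing to the statement that every $g\in G$ commensurates $S\ker\phi$, you still must ``correct'' an arbitrary $u$ by an element of $\ker(\phi|_U)$ so that it commensurates $S$ itself, and you explicitly leave this unproved. The paper does exactly this, working inside the \emph{compact} group $L=S\ker(\phi|_U)$ (note $S$ is a $p$-Sylow subgroup of $L$): from $|V:\O_p(V)|_p<\infty$ one gets $\Comm_U(L)=U$ directly (no need to pass to the quotient $G/\ker\phi$), and then for $u\in U$ the group $uSu^{-1}\cap L$ is open in $uSu^{-1}$ and of finite index in a $p$-Sylow subgroup of $L$, so Sylow conjugacy in $L$ produces $l\in L$ with $uSu^{-1}$ commensurate to $lSl^{-1}$; hence $U=\Comm_U(S)\ker(\phi|_U)$ and, by density of $\Comm_G(S)$, $G=\Comm_G(S)\ker\phi$. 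Your quotient detour also lands you with the non-compact group $S\ker\phi$, where Sylow-type arguments are not available, so the intersection with $U$ and the passage to $L$ is not just cosmetic.

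In (iv) the gap is larger. Your reformulation (show $N=\bigcap_{d\in D\cap U}dSd^{-1}$ has finite index in $S$) is a legitimate target, since $N\supseteq\O_p(U)$ is normalised by the dense subgroup $D\cap U$ and hence by $U$; but note it is \emph{equivalent} to the conclusion (a), so nothing has been gained, and the sketch you give for it (``the chain bounds the conjugacy behaviour \ldots density forces it to stabilise'') is not an argument. In particular it contains no substitute for the key external input of the paper's proof: Corollary \ref{normprop} (resting on the Caprace--Monod theorem, Proposition \ref{capmon}), which guarantees that each element $uxu^{-1}$ (for $x\in D\cap U$, $u\in U$), merely by commensurating $S$, normalises some \emph{open subgroup} of $S$ --- a fact that is not elementary even inside a profinite group, since the relevant compactly generated group $\langle S,uxu^{-1}\rangle$ must be taken with the refined topology of $U_{(S)}$. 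That fact is what makes $U=\bigcup_{T\le_o S}X(T)$ a countable union of closed sets (countability is where first-countability enters), so Baire gives an open normal $V\unlhd U$ with $T\cap V$ a $p$-Sylow of $V$ normalised by all $vxv^{-1}$, whence $x$ normalises every $p$-Sylow subgroup of $V$. A second, separate argument is then needed to pass from ``each $x\in D\cap U$ works for \emph{some} $V$'' to a single $V$: the paper covers $D\cap U$ by the closed normal subgroups $R(V)$ (intersections of normalisers of all $p$-Sylow subgroups of $V$), uses the hypothesis that $U$ has only finitely many maximal open normal subgroups to make the Mel'nikov subgroup $\M(U)$ open, finds finitely many $V_i$ with $\bigcup R(V_i)$ meeting every coset of $\M(U)$, and concludes $R(V)=U$ for $V=\bigcap V_i$ because a proper closed normal subgroup of a profinite group lies in a maximal open normal subgroup. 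Neither the Caprace--Monod ingredient nor this Mel'nikov-subgroup argument appears in your proposal, so (c)$\Rightarrow$(a) is missing its proof; by contrast, your direct proof of (a)$\Rightarrow$(b) (bypassing part (ii)) is correct and slightly more economical than the paper's reduction.
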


For part (i), note that if $G$ is profinite, then $G_{(p)}$ is topologically residually finite.  As a result, $p$-localisations of profinite groups can be examples of \tdlc{} groups in which the open normal subgroups have trivial intersection, but do not form a base of neighbourhoods of the identity.    It is also worth noting that the $p$-cores of open compact subgroups of a \tdlc{} group form a single commensurability class (see Proposition \ref{corecomm}).

\

It is a feature of \tdlc{} groups that the modular function takes only rational values, that is, the scaling of the Haar measure by conjugation (or indeed by any topological automorphism) is always by a rational factor.  In a locally virtually pro-$p$ group, the values are always powers of $p$.  We will use a subscript $p$ to denote the $p$-part of a rational number, or the set of $p$-parts of a set of rational numbers.  The modular function of a \tdlc{} group and that of its $p$-localisation are related in a straightforward way.

\begin{prop}\label{goodmod}Let $G$ be a \tdlc{} group, let $S$ be a local $p$-Sylow subgroup of $G$ and let $x \in \Comm_G(S)$.  Let $\Delta$ be the modular function on $G$, and let $\Delta_{(p)}$ be the modular function on $G_{(p)}$.  Then $\Delta(x)_p = \Delta_{(p)}(x)$.  As a consequence, $\Delta(G)_p = \Delta_{(p)}(G_{(p)})$.\end{prop}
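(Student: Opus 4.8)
The plan is to compute both modular functions from the index formula for \tdlc{} groups and then to compare $p$-parts using Sylow theory for profinite groups. Recall that for any compact open subgroup $U$ of a \tdlc{} group and any $x \in G$,
\[
\Delta(x) = \frac{[xUx^{-1} : U \cap xUx^{-1}]}{[U : U \cap xUx^{-1}]}
\]
(with one of the two standard sign conventions for the modular function; the other merely replaces $\Delta$ and $\Delta_{(p)}$ simultaneously by their reciprocals, so nothing is lost by fixing this one). Applying the same formula inside $G_{(p)}$, where $S$ is a compact open subgroup and $x \in \Comm_G(S) = G_{(p)}$, gives $\Delta_{(p)}(x) = [xSx^{-1} : S \cap xSx^{-1}] / [S : S \cap xSx^{-1}]$. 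Since $S$ and $xSx^{-1}$ are pro-$p$, this is a power of $p$, so $\Delta_{(p)}(x)$ is its own $p$-part and it suffices to prove $\Delta(x)_p = \Delta_{(p)}(x)$.

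The crux is to choose $U$ in the first formula so that the $p$-Sylow subgroups match across the relevant intersections. Concretely, I want a compact open $U$ with $S \leq U$, with $S$ a $p$-Sylow subgroup of $U$, and with $W := S \cap xSx^{-1}$ a \emph{maximal} pro-$p$ subgroup of $V := U \cap xUx^{-1}$. Beginning with any compact open $U_0$ in which $S$ is a $p$-Sylow subgroup, $S$ stays a $p$-Sylow subgroup of every open subgroup of $U_0$ containing it, and as $U$ ranges over such subgroups the intersection of the resulting $V$'s is $W$, which is always a pro-$p$ subgroup of $V$; the point that makes the shrinking terminate usefully is that, because $x \in \Comm_G(S)$, the subgroup $W$ is \emph{open} in both $S$ and $xSx^{-1}$. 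It is here that the structural results on local $p$-Sylow subgroups and on $\Comm_G(S)$ proved earlier are meant to be invoked, and \emph{this is the step I expect to be the main obstacle}. Observe also that once $W$ is a maximal pro-$p$ subgroup of $V$, maximality automatically forces $S \cap V = W = xSx^{-1} \cap V$, since $S \cap V$ and $xSx^{-1} \cap V$ are pro-$p$ subgroups of $V$ containing $W$.

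Granting such a $U$, the rest is bookkeeping. The elementary input is: if $B \leq A$ are closed subgroups of a profinite group with $[A:B]$ finite and $P$ is a $p$-Sylow subgroup of $A$ for which $P \cap B$ is a $p$-Sylow subgroup of $B$, then $[A:B]_p = [P : P \cap B]$, which one checks by passing to the finite quotient $A/N$ for $N \unlhd A$ open with $N \leq B$ and invoking the corresponding statement for finite groups. Applying this with $(A,B,P) = (U,V,S)$ and with $(A,B,P) = (xUx^{-1},V,xSx^{-1})$ — legitimate precisely because $S \cap V = W$ and $xSx^{-1} \cap V = W$ are $p$-Sylow subgroups of $V$ — gives $[U:V]_p = [S:W]$ and $[xUx^{-1}:V]_p = [xSx^{-1}:W]$, and dividing yields
\[
\Delta(x)_p = \frac{[xUx^{-1}:V]_p}{[U:V]_p} = \frac{[xSx^{-1}:W]}{[S:W]} = \Delta_{(p)}(x).
\]

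For the consequence, $\Delta$ is trivial on every compact open subgroup $U$ of $G$ (its image under $\Delta$ is a compact subgroup of $\bR_{>0}$, hence trivial) and so is constant on the cosets $gU$, while $\Comm_G(S)$ is dense in $G$ by Theorem~\ref{commdense} and therefore meets every such coset; hence $\Delta(\Comm_G(S)) = \Delta(G)$ as subsets of $\bQ_{>0}$, and applying the $p$-part map together with the identity just proved gives $\Delta(G)_p = \Delta(\Comm_G(S))_p = \Delta_{(p)}(\Comm_G(S)) = \Delta_{(p)}(G_{(p)})$.
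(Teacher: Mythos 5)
Your overall strategy --- compute both modular functions from the index formula for a compact open subgroup and compare $p$-parts via Sylow theory --- is in the same spirit as the paper's, but your execution hinges on a claim you do not prove and which you yourself flag as the main obstacle: the existence of a compact open $U$ with $S \le U$, $S$ a $p$-Sylow subgroup of $U$, and $W = S \cap xSx^{-1}$ a $p$-Sylow subgroup of $V = U \cap xUx^{-1}$. This is a genuine gap, and the shrinking argument you sketch does not close it. Knowing that $W$ is open in $S$ and in $xSx^{-1}$ and that the intersection of the groups $V$ over all admissible $U$ is $W$ does not imply that $|V:W|_p=1$ at any finite stage: a decreasing family of profinite groups whose intersection is a pro-$p$ group need not contain that group as a $p$-Sylow subgroup at any stage, and nothing in your sketch rules out pro-$p$ subgroups of $V$ strictly containing $W$ occurring for every admissible $U$, with the extra $p$-elements accumulating into $W$. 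In general $S \cap V$ need not be a $p$-Sylow subgroup of $V$ even though $S$ is one of $U$; tellingly, in the paper the analogous alignment problem (in the proof of Theorem \ref{locscale}(ii)) is only resolved by replacing $x$ with $xu$ for a suitable $u \in U$ supplied by Sylow's theorem --- an adjustment unavailable to you here, since the statement concerns the given $x$, and $u$ need not lie in $\Comm_G(S)$, so $\Delta_{(p)}(u)$ need not even be defined.

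The paper's proof shows that no such alignment is needed. Take an arbitrary compact open subgroup $U$ of $G$ and an arbitrary open subgroup $T$ of $S$ with $T \le U$ (no containment $S \le U$ and no Sylow property of $T$ in $U$ is assumed). Since $x \in \Comm_G(S)$, the groups $T$ and $xTx^{-1}$ are commensurate to $S$, hence commensurate to $p$-Sylow subgroups of $U$, of $xUx^{-1}$ and of $U \cap xUx^{-1}$, so the $p$-parts of all relevant (supernatural) indices are finite. Multiplicativity of $p$-parts along chains of closed subgroups and the conjugation identity $|xUx^{-1}:xTx^{-1}|_p = |U:T|_p$ then give $\Delta(x)_p = |xTx^{-1}:xTx^{-1} \cap T|\,/\,|T:xTx^{-1} \cap T| = \Delta_{(p)}(x)$ directly: the exact Sylow containments you wanted are replaced by finiteness of the $p$-part of the ``defect'', which cancels between numerator and denominator. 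Your bookkeeping lemma about $[A:B]_p = [P:P\cap B]$ is fine, as is your deduction of $\Delta(G)_p = \Delta_{(p)}(G_{(p)})$ from triviality of $\Delta$ on compact subgroups and density of $\Comm_G(S)$; the missing piece is precisely the aligned $U$, and the paper's computation shows it is unnecessary.
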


A more powerful invariant in the theory of \tdlc{} groups is the scale (function) introduced by G. A. Willis (\cite{Wil1}), a continuous function from a \tdlc{} group (or its automorphism group) to the natural numbers.  Similar considerations apply in terms of divisibility by powers of $p$ as with the modular function.  However, the connection between the scale on $G$ and the scale on $G_{(p)}$ is more complicated.

\begin{thm}\label{locscale}Let $G$ be a \tdlc{} group and let $S$ be a local $p$-Sylow subgroup of $G$.  Write $s$ for the scale on $G$, and $s_{(p)}$ for the scale on $G_{(p)} := G_{(S)}$.  Let $x \in G$.
\begin{enumerate}[(i)]
\item If $x$ commensurates $S$, then $s(x)_p \le s_{(p)}(x)$.
\item Let $U$ be a tidy subgroup for $x$ in $G$.  Then there exists $u \in U$ such that $xu$ commensurates $S$ and such that
\[ s(x)=s(xu); \; s(x)_p = s_{(p)}(xu).\]\end{enumerate}\end{thm}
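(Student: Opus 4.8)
The plan is to run Willis's tidying machinery for the scale function on $G$ and on $G_{(p)}$ in parallel, transferring information between them by means of the density of $\Comm_G(S)$ (Theorem~\ref{commdense}) and the agreement of $p$-parts of modular functions (Proposition~\ref{goodmod}).

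For part~(i), the first move is to observe that $s_{(p)}(x)$ is a power of $p$. A standard property of the scale is that $s(y)$ divides $[yWy^{-1}:yWy^{-1}\cap W]$ for every compact open subgroup $W$; applied inside $G_{(p)}$ with $W=S$ and using $x\in\Comm_G(S)$ (so that $S$ and $xSx^{-1}$ are commensurable pro-$p$ subgroups of $G_{(p)}$), this gives $s_{(p)}(x)\mid[xSx^{-1}:xSx^{-1}\cap S]$, and the latter index is a power of $p$. Hence it is enough to show $s(x)_p\mid s_{(p)}(x)$. I would do this by taking a tidy subgroup $V$ for $x$ in $G_{(p)}$, so that $s_{(p)}(x)=[xV_+x^{-1}:V_+]$ with $V_+\le xV_+x^{-1}$, and then using that $V$, being commensurable with $S$, lies in a compact open subgroup $W$ of $G$ which (invoking the structure theory of local Sylow subgroups from earlier in the paper) may be chosen so that $V$ is a pro-$p$ Sylow subgroup of $W$; a comparison of $[xV_+x^{-1}:V_+]$ with $[xWx^{-1}:xWx^{-1}\cap W]$ via the second isomorphism theorem, exploiting that conjugation by $x$ carries pro-$p$ Sylow subgroups to pro-$p$ Sylow subgroups, shows that the $p$-part of $[xWx^{-1}:xWx^{-1}\cap W]$ divides $[xV_+x^{-1}:V_+]$. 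Combined with $s(x)\mid[xWx^{-1}:xWx^{-1}\cap W]$, this yields $s(x)_p\mid s_{(p)}(x)$.

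For part~(ii), two ingredients are immediate. First, since $U$ is tidy for $x$ it is tidy for $xu$ with $s(xu)=s(x)$ for every $u\in U$; and since $xu$ and $ux$ are conjugate and $U$ is tidy for $x^{-1}$, also $s((xu)^{-1})=s(x^{-1})$. Second, $\Comm_G(S)$ is dense in $G$ and $xU$ is open, so some $u_0\in U$ satisfies $xu_0\in\Comm_G(S)$; replacing $x$ by $xu_0$ I may assume $x$ itself commensurates $S$, with $U$ still tidy for $x$. By part~(i), $s_{(p)}(xu)\ge s(xu)_p=s(x)_p$ for every $u\in U\cap\Comm_G(S)$, and it remains to choose $u$ in this dense subgroup of $U$ realizing equality. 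For this I would carry out a tidying argument for $xu$ inside $G_{(p)}$ using the $G$-tidy subgroup $U$: take a pro-$p$ Sylow subgroup $P$ of the positive part $U_+$, note that conjugation by $xu$ sends pro-$p$ Sylow subgroups of $U_+$ to pro-$p$ Sylow subgroups of $(xu)U_+(xu)^{-1}$, and use Sylow conjugacy inside $(xu)U_+(xu)^{-1}$, together with the index identity $[(xu)U_+(xu)^{-1}:U_+]=s(xu)=s(x)$, to adjust $u$ so that $P\le(xu)P(xu)^{-1}$ and $[(xu)P(xu)^{-1}:P]=s(x)_p$; one must simultaneously arrange (using density of $\Comm_G(S)$ and conjugacy of local $p$-Sylow subgroups) that $P$ is commensurable with $S$, and then check, via Willis's tidiness criteria, that $P$ is tidy for $xu$ in $G_{(p)}$, which forces $s_{(p)}(xu)=[(xu)P(xu)^{-1}:P]=s(x)_p$. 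Having reduced to the case where $x$ commensurates $S$, one may alternatively force the corresponding equality for $(xu)^{-1}$ and then deduce it for $xu$ from Proposition~\ref{goodmod}.

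The index bookkeeping above — second isomorphism theorem, $s(y^n)=s(y)^n$, $\Delta(y)=s(y)/s(y^{-1})$, and the coset-invariance of tidy subgroups and of the scale — is routine, and I expect the real obstacle to be the compatibility of the two tidying procedures: the machinery in $G$ naturally produces a pro-$p$ subgroup with the correct expansion factor $s(x)_p$, but a priori in the wrong commensurability class, whereas the machinery in $G_{(p)}$ sees only pro-$p$ subgroups commensurable with $S$. Forcing the pro-$p$ subgroup built from $U$ into the commensurability class of $S$ — so that it is a genuine compact open subgroup of $G_{(p)}=G_{(S)}$ — without destroying its tidiness for $xu$ in $G_{(p)}$, and keeping control of how the positive part of $U$ for $xu$ varies with $u$, is the delicate point; it is precisely here that the density of $\Comm_G(S)$ and the conjugacy theory of local Sylow subgroups must be used in tandem.
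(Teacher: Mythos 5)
Both halves of your plan hinge on steps that fail, not merely on deferred bookkeeping. For (i), you assume a tidy subgroup $V$ for $x$ in $G_{(p)}$ ``may be chosen so that $V$ is a pro-$p$ Sylow subgroup of'' some compact open subgroup $W$ of $G$. Nothing in the paper provides this, and it is false in general: compact open subgroups of $G_{(p)}$ are merely commensurate to $S$ (they are virtually pro-$p$, not necessarily pro-$p$), tidy subgroups cannot be shrunk or replaced at will (an open subgroup of a tidy subgroup need not be tidy), and even a pro-$p$ subgroup commensurate to $S$ need not be a local $p$-Sylow subgroup of $G$ --- e.g.\ in $G=\bZ_p\times \bZ/p\bZ$ with $S=G$, the subgroup $p\bZ_p\times 1$ is a $p$-Sylow subgroup of no open compact subgroup of $G$. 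Because exact Sylow alignment is unavailable, a one-step index comparison only yields $s(x)_p\le C\, s_{(p)}(x)$ with a bounded error factor $C$; the paper's proof is built precisely around this obstruction: it takes $U$ tidy for $x$ in $G$, uses the lemma of \S 2 to find a tidy subgroup $W\le U$ for $x$ in $G_{(p)}$, and compares $|U:U_n|=s(x)^n$ with $|U:W_n|_p\le |U:W|_p\, s_{(p)}(x)^n$ for the iterated intersections $U_n$, $W_n$, letting $n\to\infty$ so that the bounded factor $|U:W|_p$ washes out.

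For (ii), your candidate tidy subgroup --- a $p$-Sylow subgroup $P$ of $U_+$ --- is in general not a compact open subgroup of $G_{(p)}$ at all: $U_+$ typically has infinite index in $U$ (it is trivial, for instance, for a translation in $\bQ_p\rtimes\bZ$), so $P$ cannot be commensurate with $S$, and the ``delicate point'' you defer (forcing $P$ into the commensurability class of $S$) is an impossibility for this choice rather than a technicality. The construction also aims at more than is needed: since the scale is a minimum over compact open subgroups, it suffices to exhibit one compact open subgroup of $G_{(p)}$ whose displacement index under $xu$ equals $s(x)_p$, and then quote part (i) for the reverse inequality; no tidiness in $G_{(p)}$ has to be certified. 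That is exactly what the paper does: it takes a $p$-Sylow subgroup $R$ of $U\cap xUx^{-1}$ containing $S\cap U\cap xUx^{-1}$ as an open subgroup, extends $R$ to $p$-Sylow subgroups $T$ of $U$ and $T^*$ of $xUx^{-1}$, uses Sylow conjugacy in $xUx^{-1}$ to write $T^*=xuTu^{-1}x^{-1}$ with $u\in U$, so that $xuT(xu)^{-1}\cap T=R$ and $|xuT(xu)^{-1}:xuT(xu)^{-1}\cap T|=|T^*:R|=|xUx^{-1}:xUx^{-1}\cap U|_p=s(x)_p$, whence $s_{(p)}(xu)\le s(xu)_p$ and equality follows from (i). To salvage your approach, work with Sylow subgroups of the whole tidy subgroup $U$ rather than of $U_+$, and drop the attempt to verify tidiness in $G_{(p)}$.
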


\begin{cor}\label{scalesub}Let $G$ be a \tdlc{} group.  Then $s(G)_p \subseteq s_{(p)}(G_{(p)})$.  In particular, if $G_{(p)}$ is uniscalar then $s(x)$ is coprime to $p$ for every $x \in G$.\end{cor}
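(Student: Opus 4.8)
The plan is to read off the containment directly from Theorem~\ref{locscale}(ii). Recall that, by definition, $s(G)_p = \{s(x)_p : x \in G\}$, while $s_{(p)}(G_{(p)}) = \{s_{(p)}(y) : y \in \Comm_G(S)\}$, since the underlying abstract group of $G_{(p)} = G_{(S)}$ is $\Comm_G(S)$ (only the topology has been refined so that $S$ is open compact).

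Fix $x \in G$ and let $U$ be a tidy subgroup for $x$ in $G$; such a subgroup exists by Willis's theory. By Theorem~\ref{locscale}(ii) there is an element $u \in U$ such that $xu$ commensurates $S$ and $s(x)_p = s_{(p)}(xu)$. Since $xu \in \Comm_G(S)$, the quantity $s_{(p)}(xu)$ is a value taken by the scale function of $G_{(p)}$, so $s(x)_p = s_{(p)}(xu) \in s_{(p)}(G_{(p)})$. As $x \in G$ was arbitrary, this yields $s(G)_p \subseteq s_{(p)}(G_{(p)})$.

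For the final assertion, if $G_{(p)}$ is uniscalar then $s_{(p)}(G_{(p)}) = \{1\}$, so the inclusion just established forces $s(x)_p = 1$ for every $x \in G$; that is, $s(x)$ is coprime to $p$ for all $x \in G$.

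There is essentially no obstacle beyond correctly invoking Theorem~\ref{locscale}(ii): the one point that deserves a moment's care is the identification of the underlying group of $G_{(p)}$ with $\Comm_G(S)$, which is what guarantees that $s_{(p)}(xu)$ genuinely belongs to the set $s_{(p)}(G_{(p)})$ once $xu$ is known to commensurate $S$. Note also that it is crucial to use part (ii) rather than part (i) of Theorem~\ref{locscale}, since part (i) only gives an inequality $s(x)_p \le s_{(p)}(x)$ for elements already commensurating $S$, which would not by itself produce the claimed set-theoretic containment.
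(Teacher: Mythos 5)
Your proof is correct and follows exactly the route the paper intends: the corollary is an immediate consequence of Theorem~\ref{locscale}(ii), with $s(x)_p = s_{(p)}(xu)$ for a suitable $u$ in a tidy subgroup and $xu \in \Comm_G(S)$ giving the containment, and the uniscalar case read off from it. The paper gives no separate argument, so there is nothing to add.
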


In \S\ref{egsec} we will consider groups $G$ acting on trees that are universal with respect to a prescribed action at each vertex, in the sense of Burger and Mozes, and show how to calculate scale functions for such groups.  These examples will show that if every element of $G$ has scale coprime to $p$, it does not follow that $G_{(p)}$ is uniscalar.

\

We will also consider the class of locally virtually prosoluble groups.  All of the results stated so far have obvious alternative forms in which $G$ is required to be locally virtually prosoluble, but $p$ may be replaced by an arbitrary set of primes $\pi$; the proofs are essentially the same, using Hall's theorem for prosoluble groups instead of Sylow's theorem for profinite groups.  (We leave it as an exercise for the interested reader to state and prove these results; they will not be needed for the purposes of this paper.)  A more interesting approach is to consider Sylow subgroups for every prime at once using local Sylow bases.  We will obtain results analogous to parts of Theorems \ref{commdense} and \ref{pnormthm} in which the local $p$-Sylow subgroup $S$ is replaced by a local Sylow basis, that is, a set of pro-$p$ groups, one for each prime $p$, that are pairwise permutable and whose product is an open prosoluble subgroup of $G$.

\section{Localisation}

We recall the definitions of tidy subgroups and the scale.  Results of Willis can be used to establish a connection between the tidy subgroups of a \tdlc{} group and those of its localisations.

\begin{defn}\label{tidydef}Let $G$ be a \tdlc{} group, let $H$ be a closed normal subgroup of $G$, let $x \in G$, and let $U$ be an open compact subgroup of $H$.  Define the following subgroups of $H$:
\[ U_+ = \bigcap_{i \ge 0}x^iUx^{-i}; \; U_- = \bigcap_{i \le 0} x^iUx^{-i}; \; U_{++} = \bigcup_{j \ge 0}\bigcap_{i \ge j}  x^iUx^{-i}; \; U_{--} = \bigcup_{j \le 0}\bigcap_{i \le j}x^iUx^{-i}.\]

Say that $U$ is \emph{tidy} for $x$ (in $H$) if the following conditions are satisfied:

(I) $U = U_+U_-$;

(II) $U_{++}$ and $U_{--}$ are closed subgroups of $H$.

The \emph{scale} $s_H(x)$ of $x$ on $H$ is the minimum value of $|xUx^{-1}: xUx^{-1} \cap U|$ as $U$ ranges over the open compact subgroups of $H$.  In $\cite{Wil2}$, it is shown that the minimum value of $|xUx^{-1}: xUx^{-1} \cap U|$ is achieved if and only if $U$ is tidy for $x$; in particular this implies that every element has a tidy subgroup.\end{defn}

\begin{lem}Let $G$ be a \tdlc{} group, and let $K$ be a closed compact subgroup of $G$.  Let $x \in \Comm_G(K)$.  Let $U$ be a tidy subgroup for $x$ in $G$.  Then $U$ contains a tidy subgroup for $x$ in $G_{(K)}$.\end{lem}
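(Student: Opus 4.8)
The plan is to run Willis's tidying procedure for $x$ inside the group $G_{(K)}$, starting from the compact open subgroup $U \cap K$ of $G_{(K)}$, and to observe that, because $U$ is already tidy for $x$ in $G$, the procedure never leaves $U$. Two features of the localisation will be used. First, the compact open subgroups of $G_{(K)}$ are precisely the subgroups of $\Comm_G(K)$ that are commensurate with $K$; in particular $U \cap K$, being open of finite index in $K$, is such a subgroup, and it is contained in $U$. Second, since the inclusion $G_{(K)} \rightarrow G$ is a continuous injective homomorphism, the topology of $G_{(K)}$ refines the subspace topology inherited from $G$, so every subset of $\Comm_G(K)$ that is closed in $G$ is closed in $G_{(K)}$; note also that the conjugation action of $x$, and hence the subgroups $V_+$, $V_-$, $V_{++}$, $V_{--}$ attached to any subgroup $V \le \Comm_G(K)$, are the same whether computed in $G$ or in $G_{(K)}$.

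First I would apply the ``tidy above'' part of Willis's procedure (\cite{Wil1, Wil2}) in $G_{(K)}$ to $U \cap K$, obtaining a finite intersection $V$ of $x$-conjugates of $U \cap K$ (including $U \cap K$ itself) that is tidy above for $x$ in $G_{(K)}$, that is, $V = V_+V_-$. Since $x \in \Comm_G(K)$, each conjugate $x^i(U\cap K)x^{-i}$ is commensurate with $K$, so the finite intersection $V$ is too; hence $V$ is a compact open subgroup of $G_{(K)}$, and clearly $V \le U \cap K \le U$.

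Next I would apply the ``tidy below'' part in $G_{(K)}$: there is a compact subgroup $L$ of $G_{(K)}$, contained in $\overline{V_{++}} \cap \overline{V_{--}}$ (closures in $G_{(K)}$) and normalised by $x$, such that $VL$ is a compact open subgroup of $G_{(K)}$ that is tidy for $x$ in $G_{(K)}$. The crucial claim is that $L \le U$. From $V \le U$ we get $V_{++} \le U_{++}$ and $V_{--} \le U_{--}$; since $U$ is tidy for $x$ in $G$, the subgroups $U_{++}$ and $U_{--}$ are closed in $G$, and since $V_{++}, V_{--} \le \Comm_G(K)$ it follows from the topological remark above that $\overline{V_{++}} \le U_{++}$ and $\overline{V_{--}} \le U_{--}$, closures taken in $G_{(K)}$. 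Hence $L \le U_{++} \cap U_{--}$, and by a standard property of tidy subgroups (\cite{Wil2}), $U_{++} \cap U_{--} \le U$. Therefore $L \le U$, so $VL \le U$, and $VL$ is the desired tidy subgroup for $x$ in $G_{(K)}$ contained in $U$.

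The main point to extract carefully from the references is the output of the tidying-below step --- specifically that the compact subgroup it adjoins to $V$ lies in $\overline{V_{++}} \cap \overline{V_{--}}$ --- together with the fact that $U_{++} \cap U_{--} \le U$ whenever $U$ is tidy for $x$ in $G$. Granting these, the remaining verifications (that $G_{(K)}$ is a \tdlc{} group so that Willis's results apply to it, that $U \cap K$ is compact open in $G_{(K)}$, and the commensurability bookkeeping) are routine.
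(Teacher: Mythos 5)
Your argument is correct and takes essentially the same route as the paper: run Willis's tidying procedure inside $G_{(K)}$ starting from a compact open subgroup contained in $U$, and use the tidiness of $U$ in $G$ together with the fact that the topology of $G_{(K)}$ refines the induced one to confine the tidying-below step to $U$ (your containment $U_{++}\cap U_{--}\subseteq U$ is exactly the paper's $\mcM_U\subseteq U$ from the corollary to Lemma 3 of \cite{Wil1}). The only minor imprecision is that the procedure of \cite{Wil2}, \S 3 produces a tidy subgroup \emph{contained in} $V\mathcal{L}$ rather than $V\mathcal{L}$ itself, but containment in $U$ is all that is required, just as in the paper's proof.
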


\begin{proof}Let $T$ be an open compact subgroup of $G_{(K)}$ that is contained in $U$.  By (\cite{Wil1}, Lemma 1), there is an open subgroup of $T$ that satisfies condition (I) for $x$, so we may assume $T$ satisfies condition (I).  Now define
\[ \mcM_U = \{g \in G \mid \exists k \; \forall |n| > k: x^ngx^{-n} \in U \};\]
\[ \mcM_T = \{g \in G_{(K)} \mid \exists k \; \forall |n| > k: x^ngx^{-n} \in T \}.\]
Note that $\mcM_T$ is a subset of $\mcM_U$.  By (\cite{Wil1}, Corollary to Lemma 3), the fact that $U$ is tidy for $x$ in $G$ implies that $\mcM_U \subseteq U$, so $\mcM_T \subseteq U$.  Note also that $U \cap G_{(K)}$ is open and hence closed in $G_{(K)}$.  In particular, $U$ contains the closure $M$ of $\mcM_T$ in $G_{(K)}$.  There is now a procedure, given in (\cite{Wil2}, \S3), giving a tidy subgroup $W$ for $x$ in $G_{(K)}$ that is contained in $TM$ and hence in $U$.\end{proof}

We now give some general conditions on a family of subgroups, analogous to some well-known properties of Sylow subgroups in finite group theory, which will have useful consequences for commensurators and localisation.

\begin{defn}Let $G$ be a group, and let $\Sigma_1, \Sigma_2$ be sets of subgroups.  A \emph{commensuration of $\Sigma_1$ to $\Sigma_2$} is a bijection $\beta: \Sigma_1 \rightarrow \Sigma_2$ such that $\beta(S)$ is commensurate to $S$ for all $S \in \Sigma_1$ and $\beta(S)=S$ for all but finitely many $S \in \Sigma_1$; say $\Sigma_2$ is \emph{commensurate} to $\Sigma_1$ if a commensuration from $\Sigma_1$ to $\Sigma_2$ exists, and write $\Sigma_1 \le_o \Sigma_2$ if in addition $\beta(S)$ contains $S$ as an open subgroup for all $S \in \Sigma_1$.  Given a group $H$ acting on $G$, write $\Comm_H(\Sigma_1)$ for the group of elements $h \in H$ for which $S \mapsto h(S)$ is a commensuration from $\Sigma_1$ to $h(\Sigma_1)$.  Say $h \in H$ \emph{normalises} $\Sigma_1$ if $h(S) = S$ for all $S \in \Sigma_1$.

Given a subset $X$ of $G$, define $\Sigma_1 \cap X := \{S \cap U \mid S \in \Sigma_1\}$.\end{defn}

We do not attempt to define a localisation of the topology `at $\Sigma$' in the cases where $\Sigma$ can consist of more than one subgroup.

\begin{defn}Let $\mcC$ be a class of \tdlc{} groups that is closed under taking open subgroups.  A \emph{locally conjugate structure} $\mcL$ defined for $\mcC$ is an assignment to every $G \in \mcC$ a non-empty set $\mcL(G)$ of (sets of) compact subgroups of $G$ such that:
\begin{enumerate}[(i)]
\item $\mcL$ is compatible with isomorphisms;
\item Given $\Sigma_1,\Sigma_2 \in \mcL(G)$, there is some $g \in G$ such that $g\Sigma_1g^{-1}$ is commensurate to $\Sigma_2$;
\item Given an open subgroup $U$ of $G$, then every element of $\mcL(U)$ is commensurate to an element of $\mcL(G)$ and vice versa.\end{enumerate}\end{defn}

\begin{lem}\label{lcslem}Let $\mcC$ be a class of \tdlc{} groups that is closed under taking open subgroups and let $\mcD$ be the class of \tdlc{} groups that have an open subgroup in $\mcC$.  Let $\mcL$ be a locally conjugate structure defined on $\mcC$.  Then there is a locally conjugate structure $\mcL^*$ defined on $\mcD$ by setting $\mcL^*(G) = \bigcup_{U \in \mcU(G)} \mcL(U)$, where $\mcU(G)$ is the set of open $\mcC$-subgroups of $G$.\end{lem}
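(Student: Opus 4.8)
The plan is to verify directly that $\mcL^*$ meets the three axioms defining a locally conjugate structure on $\mcD$, together with the two implicit requirements: that $\mcD$ is itself closed under taking open subgroups, and that each $\mcL^*(G)$ is a non-empty set of (sets of) compact subgroups of $G$. Both of these are quick. If $G \in \mcD$ has an open $\mcC$-subgroup $U$ and $V$ is an open subgroup of $G$, then $U \cap V$ is open in $U$, hence lies in $\mcC$, and is open in $V$, so $V \in \mcD$; this manoeuvre of intersecting with a fixed open $\mcC$-subgroup is really the engine of the whole proof. Non-emptiness holds because $\mcU(G)$ is non-empty (by the definition of $\mcD$) and each $\mcL(U)$ is non-empty, and a member of $\mcL(U)$ is a set of compact subgroups of $U$, hence of $G$. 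Axiom (i) (compatibility with isomorphisms) is handled by transporting the set $\mcU(G)$ along a topological isomorphism $\phi$ and invoking axiom (i) for $\mcL$ on each restriction $\phi|_U$, $U \in \mcU(G)$, then taking the union.

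Before the remaining two axioms I would record two elementary facts about commensurability of \emph{sets} of subgroups of a fixed group, writing $\Sigma \sim \Sigma'$ for it: it is an equivalence relation (reflexivity, symmetry and transitivity of commensurations reduce to the corresponding statements for individual subgroups, using that a finite union of finite sets is finite), and $\Sigma \sim \Sigma'$ implies $g \Sigma g^{-1} \sim g \Sigma' g^{-1}$ for every $g$ in the ambient group. For axiom (ii), given $\Sigma_1, \Sigma_2 \in \mcL^*(G)$ with $\Sigma_i \in \mcL(U_i)$ and $U_i \in \mcU(G)$, I would set $W = U_1 \cap U_2$, which lies in $\mcU(G)$ by the opening remark; apply axiom (iii) for $\mcL$ (with ambient group $U_i$ and open subgroup $W$) to obtain $\Sigma_i' \in \mcL(W)$ with $\Sigma_i' \sim \Sigma_i$; apply axiom (ii) for $\mcL$ inside $W$ to get $g \in W \subseteq G$ with $g \Sigma_1' g^{-1} \sim \Sigma_2'$; and finally chain $g \Sigma_1 g^{-1} \sim g \Sigma_1' g^{-1} \sim \Sigma_2' \sim \Sigma_2$.

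For axiom (iii), let $V$ be an open subgroup of $G \in \mcD$, so that $V \in \mcD$ by the first paragraph. An element of $\mcL^*(V)$ lies in $\mcL(W)$ for some $W \in \mcU(V)$; but then $W$ is an open subgroup of $G$ lying in $\mcC$, so $W \in \mcU(G)$ and the element already belongs to $\mcL^*(G)$, hence is (trivially) commensurate to an element of $\mcL^*(G)$. Conversely, an element $\Sigma \in \mcL^*(G)$ lies in $\mcL(U)$ for some $U \in \mcU(G)$; setting $W = U \cap V \in \mcU(V)$ and applying axiom (iii) for $\mcL$ (with ambient group $U$, open subgroup $W$) yields an element of $\mcL(W) \subseteq \mcL^*(V)$ commensurate to $\Sigma$.

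I expect the only genuinely delicate point to be the bookkeeping in axiom (ii): one must keep careful track of which group is serving as the ambient group each time axioms (ii) and (iii) for $\mcL$ are invoked, and it is precisely here that one needs commensurability of families of subgroups to be transitive and stable under conjugation. The rest is a formal manipulation resting on the single observation that the intersection of two open $\mcC$-subgroups of a \tdlc{} group is again an open $\mcC$-subgroup.
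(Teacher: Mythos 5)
Your verification is correct and is exactly the routine check the paper has in mind: the paper dismisses Lemma \ref{lcslem} with ``clear from the definition'' (at the start of the proof of Proposition \ref{lcsdense}), and your write-up simply spells out that check, with the key observation being that the intersection of an open $\mcC$-subgroup with any open subgroup is again an open $\mcC$-subgroup. The supporting facts you record (transitivity and conjugation-stability of commensurability of families of subgroups) are indeed the only points needing care, and your handling of them is sound.
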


\begin{prop}\label{lcsdense}Let $\mcC$ be a class of \tdlc{} groups closed under open subgroups, let $\mcL$ be a locally conjugate structure defined for $\mcC$, let $G \in \mcC$.  Let $U,V \leq_o G$, let $\Sigma_1 \in \mcL(U)$ and let $\Sigma_2 \in \mcL(V)$.
\begin{enumerate}[(i)]
\item Given any open subgroup $W$ of $G$, there exists $w \in W$ such that $w\Sigma_1w^{-1}$ is commensurate to $\Sigma_2$, and thus $w\Comm_G(\Sigma_1)w^{-1} = \Comm_G(\Sigma_2)$.  In particular, if in addition $\Sigma_1$ and $\Sigma_2$ are subgroups of $G$ then $G_{(\Sigma_1)} \cong G_{(\Sigma_2)}$.
\item $\Comm_G(\Sigma_1)$ is a dense subgroup of $G$.\end{enumerate}\end{prop}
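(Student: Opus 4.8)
The plan is to derive both parts from the three defining conditions of a locally conjugate structure, using only routine closure properties of commensurability of families of subgroups. First I would record these closure properties: (a) commensurability of families of subgroups is an equivalence relation; (b) for fixed $g \in G$, conjugation carries a commensuration $\Sigma \to \Sigma'$ to a commensuration $g\Sigma g^{-1} \to g\Sigma' g^{-1}$, and hence preserves commensurability classes; and (c) if $\Sigma$ is commensurate to $\Sigma'$ then $\Comm_G(\Sigma) = \Comm_G(\Sigma')$, while $\Comm_G(g\Sigma g^{-1}) = g\,\Comm_G(\Sigma)\,g^{-1}$ for every $g$. Each follows by unwinding the definitions: transitivity and conjugation-invariance reduce to the analogous statements for pairs of commensurate subgroups together with the fact that a finite union of finite ``exceptional'' sets is finite, and (c) comes from transporting the ``equal to $S$ for all but finitely many $S$'' condition along the relevant bijections.

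For part (i), given an open subgroup $W$ of $G$ I would pass to $W' := W \cap U \cap V$. This is open in $G$, hence lies in $\mcC$, and it is open in both $U$ and $V$. Applying condition (iii) in the definition of a locally conjugate structure to $W' \leq_o U$ and to $W' \leq_o V$ produces $\Theta_1 \in \mcL(W')$ commensurate to $\Sigma_1$ and $\Theta_2 \in \mcL(W')$ commensurate to $\Sigma_2$. Condition (ii), applied inside the group $W' \in \mcC$, then yields $w \in W'$ with $w\Theta_1 w^{-1}$ commensurate to $\Theta_2$. Chaining these commensurabilities by (a) and (b), $w\Sigma_1 w^{-1}$ is commensurate to $w\Theta_1 w^{-1}$, hence to $\Theta_2$, hence to $\Sigma_2$; and $w \in W' \subseteq W$, which is the first assertion of part (i). The identity $w\Comm_G(\Sigma_1)w^{-1} = \Comm_G(\Sigma_2)$ is then immediate from (c). For the final sentence of part (i), if $\Sigma_1$ and $\Sigma_2$ are single compact subgroups then conjugation by $w$ is a topological isomorphism $G_{(\Sigma_1)} \to G_{(w\Sigma_1 w^{-1})}$, and since two commensurate open compact subgroups of $G$ induce the same group topology on their common commensurator (their intersection is closed of finite index, hence open in each), $G_{(w\Sigma_1 w^{-1})}$ and $G_{(\Sigma_2)}$ coincide as topological groups.

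For part (ii), it suffices to show that $\Comm_G(\Sigma_1)$ meets every coset $gW$ with $g \in G$ and $W \leq_o G$ (such cosets forming a base for the topology, as $G$ is \tdlc{}). By compatibility of $\mcL$ with isomorphisms (condition (i)), conjugation by $g^{-1}$ carries $\mcL(U)$ onto $\mcL(g^{-1}Ug)$, so $g^{-1}\Sigma_1 g \in \mcL(g^{-1}Ug)$ with $g^{-1}Ug \leq_o G$. Applying part (i) with $g^{-1}\Sigma_1 g$ in the role of $\Sigma_2$ and with the same open subgroup $W$ gives $w \in W$ with $w\Sigma_1 w^{-1}$ commensurate to $g^{-1}\Sigma_1 g$; conjugating by $g$ and using (b), $(gw)\Sigma_1 (gw)^{-1}$ is commensurate to $\Sigma_1$, so that $gw \in \Comm_G(\Sigma_1) \cap gW$. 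Hence $\Comm_G(\Sigma_1)$ is dense in $G$.

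The one genuinely non-formal point, and where I expect the main difficulty, is the detour through $W' = W \cap U \cap V$: condition (ii) of a locally conjugate structure only supplies a conjugating element somewhere in the ambient group, whereas here it must be found inside the prescribed open subgroup $W$. Transporting both $\Sigma_1$ and $\Sigma_2$ into $\mcL(W')$ via condition (iii) and then applying condition (ii) inside $W'$ is precisely what confines the conjugator to $W$; everything else is the commensurability bookkeeping collected in (a)--(c).
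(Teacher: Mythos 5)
Your proof is correct and follows essentially the same route as the paper's: part (i) by passing to $W \cap U \cap V$, transporting $\Sigma_1$ and $\Sigma_2$ into $\mcL$ of that subgroup via condition (iii) and conjugating there via condition (ii), and part (ii) by applying (i) to $g^{-1}\Sigma_1 g \in \mcL(g^{-1}Ug)$ to place an element of $\Comm_G(\Sigma_1)$ in each coset $gW$. The commensurability bookkeeping and the identification of $G_{(w\Sigma_1 w^{-1})}$ with $G_{(\Sigma_2)}$ as topological groups are left implicit in the paper, and your spelled-out versions of these points are accurate.
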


\begin{proof}Lemma \ref{lcslem} is clear from the definition.

(i) Without loss of generality $W \le U \cap V$.  By definition, there is some $\Sigma^*_1 \in \mcL(W)$ commensurate to $\Sigma_1$ and $\Sigma^*_2 \in \mcL(W)$ commensurate to $\Sigma_2$; in turn there is $w \in W$ such that $w\Sigma^*_1w^{-1}$ is commensurate to $\Sigma^*_2$.  Clearly $w\Sigma_1w^{-1}$ is commensurate to $w\Sigma^*_1w^{-1}$, so it follows that $w\Sigma_1w^{-1}$ is commensurate to $\Sigma_2$.

(ii) Let $W$ be an open compact subgroup of $G$ and let $x \in G$.  We have $x\Sigma_1x^{-1} \in \mcL(xUx^{-1})$, so by (i) there is $w$ in $W$ such that $w\Sigma_1w^{-1}$ is commensurate to $x\Sigma_1x^{-1}$, so $x \in W\Comm_G(\Sigma_1)$.  As this applies for every open subgroup $W$, it follows that $\Comm_G(\Sigma_1)$ is dense in $G$.\end{proof}

Some results of Willis on properties of the scale on passing to closed subgroups have consequences for localisation.

\begin{lem}[Willis: see \cite{Wil2}, \S4]\label{wilsclem}Let $G$ be a \tdlc{} group, let $H$ be a closed subgroup of $G$ and let $x \in G$ such that $xHx^{-1}=H$.  Let $s_G$ be the scale on $G$ and let $s_H$ be the scale on $H$.
\begin{enumerate}[(i)]
\item We have $s_H(x) \le s_G(x)$.
\item If $\N_G(H)$ is open in $G$ then $s_H(x)$ divides $s_G(x)$.
\item If $H$ is open in $G$ then $s_H(x) = s_G(x)$, in other words, $H$ contains a tidy subgroup for $x$.\end{enumerate}\end{lem}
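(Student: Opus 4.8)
The plan is to prove (i) directly by a coset count, deduce (iii) from (i) almost immediately, and reduce (ii) to the case where $H$ is a closed \emph{normal} subgroup of $G$, which is where the real work lies. For (i), I would take a tidy subgroup $U$ for $x$ in $G$, so $s_G(x)=|xUx^{-1}:xUx^{-1}\cap U|$, and test the scale on $H$ against the open compact subgroup $U\cap H$ of $H$. Since $xHx^{-1}=H$, we have $x(U\cap H)x^{-1}=xUx^{-1}\cap H$, hence
\[ s_H(x)\le |x(U\cap H)x^{-1}:x(U\cap H)x^{-1}\cap(U\cap H)| = |xUx^{-1}\cap H:xUx^{-1}\cap U\cap H|. \]
Applying the elementary fact that $|B\cap C:A\cap C|\le|B:A|$ for subgroups $A\le B$ and an arbitrary subgroup $C$ (the cosets of $A\cap C$ in $B\cap C$ inject into those of $A$ in $B$), with $B=xUx^{-1}$, $A=xUx^{-1}\cap U$ and $C=H$, yields $s_H(x)\le s_G(x)$.

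For (iii), I would choose a tidy subgroup $V$ for $x$ in $H$. As $H$ is open in $G$, $V$ is also an open compact subgroup of $G$, so $s_G(x)\le|xVx^{-1}:xVx^{-1}\cap V|=s_H(x)$; combined with (i) this forces $s_H(x)=s_G(x)$. Then $V$ is an open compact subgroup of $G$ attaining the minimal index for $x$, hence tidy for $x$ in $G$ by the characterisation recalled in Definition \ref{tidydef}; since $V\le H$, this is exactly the assertion.

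For (ii): as $x$ normalises $H$ it normalises $N:=\N_G(H)$, which is open in $G$ by hypothesis, so by (iii) there is a tidy subgroup $W$ for $x$ in $G$ with $W\le N$ and $s_N(x)=s_G(x)$; after replacing $G$ by $N$ we may assume $H\unlhd G$ and that $W$ normalises $H$. The key claim is that $W\cap H$ is then tidy for $x$ in $H$. One checks that $(W\cap H)_\pm=W_\pm\cap H$ and $(W\cap H)_{++}=W_{++}\cap H$, $(W\cap H)_{--}=W_{--}\cap H$; condition (II) for $W\cap H$ in $H$ then follows from condition (II) for $W$ in $G$ because $H$ is closed, while condition (I), that is $W\cap H=(W_+\cap H)(W_-\cap H)$, must be extracted from the tidiness of $W$ and the fact that $W_+$, $W_-$ normalise $H$. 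Granting the claim, $s_H(x)=|x(W\cap H)x^{-1}:x(W\cap H)x^{-1}\cap(W\cap H)|$; here $x(W\cap H)x^{-1}=xWx^{-1}\cap H$ is normal in $xWx^{-1}$ (since $H\unlhd G$), so by the second isomorphism theorem this index equals $|(xWx^{-1}\cap H)(xWx^{-1}\cap W):xWx^{-1}\cap W|$, a divisor of $|xWx^{-1}:xWx^{-1}\cap W|=s_G(x)$, and therefore $s_H(x)\mid s_G(x)$.

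The hard part will be verifying condition (I) for $W\cap H$ in the normal case, or equivalently proving directly that $s_H(x)$ divides $s_G(x)$ when $H\unlhd G$ is closed: this is where Willis's structural analysis of tidy subgroups is genuinely used, and it can alternatively be deduced from the multiplicativity of the scale along a closed normal subgroup, $s_G(x)=s_H(x)\,s_{G/H}(xH)$. Everything else amounts to routine bookkeeping with open compact subgroups.
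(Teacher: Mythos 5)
Your parts (i) and (iii) are fine and essentially self-contained: testing $s_H$ against the compact open subgroup $U\cap H$ of $H$ for $U$ tidy in $G$, the coset-injection bound $|B\cap C:A\cap C|\le |B:A|$, and then (for (iii)) the minimality characterisation of tidiness recalled in Definition \ref{tidydef}. Note that the paper offers no proof of this lemma at all -- it is quoted from Willis (\cite{Wil2}, \S 4) -- so your argument has to stand entirely on its own.

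It does not yet do so for (ii), and that is a genuine gap. The reduction to the case $H\unlhd G$ via $\N_G(H)$ and part (iii) is correct, and \emph{granting} your key claim the divisibility does follow by the second isomorphism argument you give. But the key claim -- that $W\cap H$ satisfies condition (I) in $H$, i.e.\ $W\cap H=(W_+\cap H)(W_-\cap H)$ -- is exactly the substantive content of the cited result of Willis; you explicitly flag it as ``the hard part'' and do not prove it, and nothing elsewhere in this paper supplies it. (The transfers $(W\cap H)_{\pm}=W_\pm\cap H$, $(W\cap H)_{++}=W_{++}\cap H$ and condition (II) are indeed routine, but (I) is not: for $u\in W\cap H$ the factorisation $u=u_+u_-$ in $W$ gives no control on whether $u_\pm\in H$, and establishing this is where Willis's finer structure theory of tidy subgroups is needed.) So as written, (ii) is a correct reduction plus an unproved statement that is equivalent to what is being proved; you would either have to reproduce Willis's argument or cite \cite{Wil2} as the paper does. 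Your proposed fallback is also unsound: the scale is \emph{not} multiplicative along closed normal subgroups in general -- the identity $s_G(x)=s_H(x)\,s_{G/H}(xH)$ fails for general closed normal $H$, and only divisibility-type statements survive -- and in any case such a product formula is a stronger assertion than the divisibility $s_H(x)\mid s_G(x)$ you are trying to establish, so deducing (ii) from it would assume more than the goal.
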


\begin{prop}\label{sgpscale}Let $G$ be a \tdlc{} group, let $H$ be a closed subgroup of $G$ and let $U$ be an open compact subgroup of $G$; write $K=H \cap U$.  Then $H$ has the same topology as a subgroup of $G_{(K)}$ as it does as a subgroup of $G$; moreover $H$ is an open subgroup of $G_{(K)}$.  Let $s_G$ be the scale on $G$ and let $s_{(K)}$ be the scale on $G_{(K)}$.  Then for $x \in \N_G(H)$, we have $s_{(K)}(x) \le s_G(x)$.  If $\N_G(H)$ is open in $G$ then $s_{(K)}(x)$ divides $s_G(x)$.\end{prop}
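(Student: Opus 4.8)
The plan is to deduce Proposition~\ref{sgpscale} from the general lemma relating tidy subgroups of $G$ to those of $G_{(K)}$ together with Willis's Lemma~\ref{wilsclem}, by observing that inside $G_{(K)}$ the subgroup $H$ is \emph{open} and hence the restriction of the scale to $H$ computed in $G_{(K)}$ agrees with the scale computed in $H$ itself. First I would verify the topological claims: since $K = H \cap U$ is a compact subgroup of $G$, it is a compact subgroup of $H$ as well, and in fact an open compact subgroup of $H$ (because $U$ is open in $G$, so $K$ is open in $H$). By construction $G_{(K)}$ carries the finest group topology making $K$ compact, so $K$ — and hence $H$, which contains $K$ — is open in $G_{(K)}$; moreover the subspace topology that $G_{(K)}$ induces on $H$ must make $K$ a compact open subgroup of $H$, and since $H$ already has such a topology inherited from $G$ with the \emph{same} compact open subgroup $K$, the two topologies on $H$ coincide. (One checks $\Comm_G(K) \supseteq H$ since $K \trianglelefteq H$, so $H$ genuinely sits inside $G_{(K)}$.)

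Next I would apply Lemma~\ref{wilsclem}(iii) \emph{within} $G_{(K)}$: $H$ is an open subgroup of $G_{(K)}$, so for any $x \in H$ we have $s_H(x) = s_{(K)}(x)$. On the other hand, for $x \in \N_G(H)$ we have $x \in \Comm_G(K)$ — since $x$ normalises $H$ and hence conjugates $K = H\cap U$ to $H \cap xUx^{-1}$, which is commensurate to $K$ because $U$ and $xUx^{-1}$ are commensurate — so $x$ lies in $G_{(K)}$ and $s_{(K)}(x)$ makes sense. Combining, $s_{(K)}(x) = s_H(x)$ whenever $x \in \N_G(H)$. Now Lemma~\ref{wilsclem}(i) applied to the closed subgroup $H \le G$ gives $s_H(x) \le s_G(x)$, yielding the first inequality $s_{(K)}(x) \le s_G(x)$; and Lemma~\ref{wilsclem}(ii), applicable precisely when $\N_G(H)$ is open in $G$, gives $s_H(x) \mid s_G(x)$, yielding the divisibility statement.

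The one point that needs genuine care — and the likely main obstacle — is the claim that the subspace topology on $H$ induced from $G_{(K)}$ coincides with its original topology from $G$, rather than just being finer or coarser in one obvious direction. The inclusion $G_{(K)} \to G$ is continuous, so the $G_{(K)}$-topology on $H$ is a priori \emph{finer} than the $G$-topology; to get equality I would argue that $K$ is compact in both and open in both, and that a group topology on $H$ is determined by declaring a fixed compact subgroup to be open — more precisely, both topologies have $\{kV : k \in K,\ V \le_o K\}$ (with $V$ ranging over open subgroups of $K$ in its intrinsic topology) as a neighbourhood basis of points of $K$, and translation spreads this over all of $H$. This is the same "coarsest topology making $K$ open equals finest topology making $K$ compact" uniqueness already invoked in the definition of the localisation, so it should go through cleanly; the subtlety is just to be sure $K$ really is open in $H$ with the $G$-topology (true, since $U$ is open in $G$) so that the intrinsic topology of $K$ and its role as a basic open set match up on the two sides.
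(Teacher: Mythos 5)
Your proposal is correct and follows essentially the same route as the paper: identify the two topologies on $H$ via the common compact open subgroup $K$, note that $H$ is open in $G_{(K)}$ and that $\N_G(H) \le \Comm_G(K)$, identify $s_{(K)}$ with the scale of $x$ acting on $H$, and conclude from Lemma \ref{wilsclem}. One small slip: $K = H \cap U$ need not be normal in $H$, but the inclusion $H \le \Comm_G(K)$ you want follows from the commensuration argument you give for elements of $\N_G(H)$ (which contains $H$), so nothing is lost.
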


\begin{proof}By construction $K$ is an open compact subgroup of $H$ under the topology induced in $G$, and so all open compact subgroups of $H$ are commensurate to $K$.  As the same holds for $H$ under the topology induced in $G_{(K)}$, we conclude that the topologies agree on $H$; in addition, $H$ is open in $G_{(K)}$ as it contains the open subgroup $K$.

Let $s_H$ be the scale defined $H$.  Then given $x \in \N_G(H)$, we can define $s_H(x)$ by considering $x$ as an automorphism of $H$.  Moreover, $\N_G(H)$ preserves the set of open compact subgroups of $H$, and therefore is a subgroup of $G_{(K)}$, so $s_{(K)}(x)$ is also defined on $\N_G(H)$; since $H$ is open in $G_{(K)}$ with compatible topology, it is clear that $s_{(K)}(x) = s_H(x)$.  If $\N_G(H)$ is open in $G$, then $s_{\N_G(H)}(x) = s_G(x)$, so we may assume $G = \N_G(H)$.  The conclusions now follow from Lemma \ref{wilsclem}.\end{proof}

A recent result of P-E. Caprace and N. Monod gives a significant restriction on localisations of profinite groups, or more generally of \tdlc{} residually discrete groups, that is, those \tdlc{} groups whose open normal subgroups have trivial intersection.

\begin{prop}[\cite{CM} Corollary 4.1]\label{capmon}Let $G$ be a residually discrete locally compact group that is compactly generated.  Then the open normal subgroups of $G$ form a base of neighbourhoods of the identity in $G$.\end{prop}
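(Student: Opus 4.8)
The plan is to combine van Dantzig's theorem, the Cayley--Abels graph construction, and the structure theory of the discrete residual from \cite{CM}.

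First I would reduce to the totally disconnected case: if $G$ is residually discrete and $N$ is an open normal subgroup, then the image of the identity component $G^\circ$ in the discrete group $G/N$ is a connected subset, hence trivial, so $G^\circ$ lies in every open normal subgroup and is therefore trivial. Thus $G$ is \tdlc{}, and by van Dantzig its compact open subgroups form a base of identity neighbourhoods; it suffices to show that every compact open subgroup $U$ contains an open normal subgroup of $G$. Since $G$ is compactly generated, a compact generating set may be covered by finitely many cosets of $U$, so $G = \langle U \cup F \rangle$ for some finite symmetric set $F$. This produces the Cayley--Abels graph $\Gamma$ with vertex set $G/U$ and edges $\{\,\{gU, gfU\} : g \in G,\ f \in F\,\}$: it is connected and locally finite, and $G$ acts on it continuously and vertex-transitively, with the stabiliser of $gU$ equal to the compact open subgroup $gUg^{-1}$. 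Writing $B_r$ for the ball of radius $r$ about the vertex $U$, the subgroups $U_r := \bigcap_{gU \in B_r} gUg^{-1}$ form a descending chain of open subgroups (a finite intersection at each stage, by local finiteness), and since $\Gamma$ is connected we have $\bigcap_r U_r = \bigcap_{g \in G} gUg^{-1} = \Core_G(U)$, the kernel of the action.

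The crux is to prove that this chain $U_0 \supseteq U_1 \supseteq \cdots$ stabilises; equivalently, that $\Core_G(U)$ is open, in which case it is the desired open normal subgroup contained in $U$ and we are done, since $U$ was an arbitrary member of a base of identity neighbourhoods. (As a consistency check: once the open normal subgroups form a base, one may choose a compact open normal $N \le U$, whence $s_G(x) = s_N(x) = 1$ for every $x$ by Lemma~\ref{wilsclem}(iii); so the statement in particular forces $G$ to be uniscalar, and it is exactly this sort of conclusion that is unavailable without using compact generation in an essential way.) I expect the stabilisation of the $U_r$ to be the main obstacle. There is no soft argument here: residual discreteness must be fed in seriously, since one has to exclude pathologies such as a finitely generated group acting on a locally finite graph in which every open normal subgroup is large. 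The route I would try is to use that $\mathrm{Res}(G) = 1$ means the $G$-action on $\Gamma$ is, in a suitable sense, the inverse limit of its actions on the discrete quotients $G/N$ over open normal $N$, and to combine this with a König's-lemma argument on the locally finite graph $\Gamma$ to force $U_r = U_{r+1}$ for some $r$. The clean way to organise this, however, is through the principal structural result of \cite{CM} on the discrete residual of a compactly generated \tdlc{} group, from which the required stabilisation follows; that is where the genuine content of the proposition lies.

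Granting that $\Core_G(U)$ is open, the argument concludes at once: $\Core_G(U)$ is an open normal subgroup of $G$ contained in $U$, and as $U$ ranges over a base of identity neighbourhoods so do the open normal subgroups of $G$.
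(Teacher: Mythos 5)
The paper does not prove this proposition at all: it is imported verbatim as Corollary 4.1 of \cite{CM} and used as a black box, so there is no internal proof to compare your attempt with. Judged on its own terms, your argument is a correct \emph{reduction}, not a proof. The preliminary steps are fine: the image of $G^\circ$ in each discrete quotient $G/N$ is trivial, so $G^\circ=1$ and $G$ is \tdlc{}; van Dantzig gives a base of compact open subgroups; hence it suffices to show that every compact open $U$ contains an open normal subgroup of $G$, equivalently that $\Core_G(U)=\bigcap_{g\in G}gUg^{-1}$ is open, equivalently that the chain $U_r=\bigcap_{gU\in B_r}gUg^{-1}$ coming from the Cayley--Abels graph stabilises. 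All of that is standard bookkeeping.

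The gap is exactly where you flag it: the stabilisation \emph{is} the theorem, and you do not prove it. The sketch via ``the action on $\Gamma$ is an inverse limit of the actions on the discrete quotients $G/N$'' combined with K\"onig's lemma is not an argument: residual discreteness only says the open normal subgroups intersect trivially, and gives no quantitative control relating the cores $U_r$ to the open normal subgroups, so no purely graph-theoretic compactness argument forces $U_r=U_{r+1}$. Your fallback is to invoke ``the principal structural result of \cite{CM} on the discrete residual'' --- but that is the very source from which the proposition is quoted, and the structural results in question (the analysis in \S 4 of \cite{CM} of the quasi-centre and of minimal closed normal subgroups of compactly generated \tdlc{} groups) are precisely the nontrivial machinery used there to prove this corollary; citing them here is circular in spirit and leaves the genuine content unproved, as you concede. (The aside about uniscalarity via Lemma \ref{wilsclem}(iii) is correct but does no work.) So what you have established is the equivalence of the statement with the openness of $\Core_G(U)$ for compact open $U$; the proof of that openness is missing.
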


\begin{cor}\label{normprop}Let $G$ be a residually discrete locally compact group and let $K$ be a compact subgroup of $G$ (possibly open).  Let $X$ be a subset of $G$ contained in $\Comm_G(K)$, such that $X$ is contained in a union of finitely many cosets of $K$.  (Equivalently, let $X$ be a relatively compact subset of $G_{(K)}$.)  Then $X$ normalises an open subgroup of $K$.  In particular, $G_{(K)}$ is uniscalar.\end{cor}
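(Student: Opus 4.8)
First I would dispose of the parenthetical equivalence, which is immediate: since $K$ is a compact open subgroup of $G_{(K)}$, a subset of $\Comm_G(K)$ is relatively compact in $G_{(K)}$ exactly when it lies in finitely many cosets of $K$. So the plan is to prove the first assertion, and then to deduce uniscalarity of $G_{(K)}$ by applying it to singletons. Assume $X \neq \emptyset$ and pick $g_1, \dots, g_n \in X$ (so each $g_i \in \Comm_G(K)$) with $X \subseteq g_1 K \cup \dots \cup g_n K$. The idea is to pass to the closed subgroup $\bar H$ of $G$ topologically generated by the compact set $K \cup \{g_1, \dots, g_n\}$: this $\bar H$ is compactly generated, it is locally compact as a closed subgroup of $G$, and it is residually discrete, since for $N$ ranging over the open normal subgroups of $G$ the subgroups $N \cap \bar H$ are open and normal in $\bar H$ with trivial intersection. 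Hence Proposition~\ref{capmon} applies to $\bar H$ and tells us that the open normal subgroups of $\bar H$ form a base of neighbourhoods of the identity.

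The core step is then to extract from this an $X$-invariant open subgroup of $K$. Since each $g_i$ commensurates $K$, each of $g_i K g_i^{-1} \cap K$ and $g_i^{-1} K g_i \cap K$ is a closed subgroup of finite index in $K$, hence open in $K$, so $V := K \cap \bigcap_{i=1}^{n}\big(g_i K g_i^{-1} \cap g_i^{-1} K g_i\big)$ is open in $K$. Being open in $K$, it has the form $W \cap K$ for some open neighbourhood $W$ of $1$ in $\bar H$; I would then choose an open normal subgroup $M$ of $\bar H$ with $M \subseteq W$ and set $V' := M \cap K \subseteq V$. Then $V'$ should be the subgroup I want: it is normal in $K$ (as $M$ is normal in $\bar H \supseteq K$); and for each $i$, $g_i V' g_i^{-1}$ lies in $g_i M g_i^{-1} = M$ and in $g_i(g_i^{-1} K g_i)g_i^{-1} = K$ (because $V' \subseteq V \subseteq g_i^{-1} K g_i$), hence in $M \cap K = V'$, and the same argument with $g_i^{-1}$ gives the reverse inclusion, so $g_i V' g_i^{-1} = V'$. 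As $X \subseteq \bigcup_i g_i K$ and $K$ normalises $V'$, every element of $X$ normalises $V'$.

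Finally, for uniscalarity I would apply the result to $X = \{x\}$ for arbitrary $x \in G_{(K)} = \Comm_G(K)$: this gives an open compact subgroup $V'$ of $G_{(K)}$ with $x V' x^{-1} = V'$, whence the scale of $x$ in $G_{(K)}$ is at most $|x V' x^{-1} : x V' x^{-1} \cap V'| = 1$. I expect the main obstacle to be the core step of the second paragraph --- converting the commensuration hypothesis into a genuinely $X$-invariant open subgroup. The trick that makes it work is shrinking $M$ until $M \cap K$ is trapped inside every conjugate $g_i^{\pm 1} K g_i^{\mp 1}$, which is exactly what upgrades ``$g_i$ commensurates $M \cap K$'' to ``$g_i$ normalises $M \cap K$''. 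A secondary point to watch is that $\bar H$, carrying the subspace topology from $G$ rather than any localised topology, is still residually discrete and compactly generated, so that Proposition~\ref{capmon} genuinely applies.
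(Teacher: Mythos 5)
Your argument is correct, and it rests on the same key ingredient as the paper: apply Proposition~\ref{capmon} to a compactly generated, residually discrete group built from $K$ and (representatives of) $X$. The difference is where you work. The paper first passes to $G_{(K)}$ itself, noting that the refined topology still makes the group residually discrete; there $K$ is \emph{open}, so taking $H=\langle K,\overline{X}\rangle$ (open in $G_{(K)}$, hence locally compact, and compactly generated) Caprace--Monod immediately supplies an open normal subgroup of $H$ inside $K$, normalised by $X$, with no further work. You instead stay in $G$ with its original topology, where $K$ need not be open, and compensate with the trapping step: shrink an open normal subgroup $M$ of $\bar H$ until $M\cap K$ sits inside $\bigcap_i\bigl(g_iKg_i^{-1}\cap g_i^{-1}Kg_i\bigr)$, which upgrades commensuration to genuine normalisation. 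What your route buys is that you never have to discuss the localised topology or verify that $G_{(K)}$ is residually discrete; what it costs is this extra (correct) bookkeeping, plus two small points worth making explicit: the coset representatives should be chosen inside $X$ so that they lie in $\Comm_G(K)$ (always possible after discarding cosets that miss $X$), and compact generation of the closed subgroup $\bar H$ topologically generated by a compact set needs the standard one-line argument of adjoining a compact neighbourhood of the identity (the subgroup it generates with $K\cup\{g_i^{\pm1}\}$ is open, hence closed, hence all of $\bar H$). With those remarks added, your proof is complete and a perfectly good alternative to the one in the paper.
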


\begin{proof}As the topology of $G_{(K)}$ is a refinement of the induced topology, we see that $G_{(K)}$ is residually discrete.  We may therefore assume that $G = G_{(K)}$ and that $K$ is open in $G$.  Consider $H = \langle K,\overline{X}\rangle$ with the induced topology.  Then $H$ is residually discrete (as this is inherited from $G$) and compactly generated, so by Proposition \ref{capmon} there is an open subgroup $V$ of $K$ that is normalised by $H$.\end{proof}

\section{Local Sylow theory}

\begin{defn}A \emph{supernatural number} is a formal product $\prod_{\piP} p^{n_p}$ of prime powers, where each $n_p$ is either a non-negative integer or $\infty$.  Multiplication of supernatural numbers is defined in the obvious manner, giving rise to a semigroup; note that any set of supernatural numbers has a supernatural least common multiple, and by unique factorisation, the supernatural numbers contain a copy of the natural numbers.  Given a supernatural number $x = \prod_{\piP} p^{n_p}$, the \emph{$p$-part} $x_p$ of $x$ is $p^{n_p}$; we will similarly talk about the $p$-part of a set $X$ of supernatural numbers, given by $X_p = \{x_p \mid x \in X\}$.  Define the \emph{index} $|G:H|$ of $H$ in $G$ to be the least common multiple of $|G/N:HN/N|$, regarded as a supernatural number, as $N$ ranges over all open normal subgroups of $G$.  Say $H$ is a \emph{$p$-Sylow subgroup} of $G$ if $H$ is a pro-$p$ group such that $|G:H|_p=1$.

Let $G$ be a \tdlc{} group.  A \emph{local $p$-Sylow subgroup} of $G$ is a $p$-Sylow subgroup of an open compact subgroup of $G$.  Let $\Sylp(G)$ be the set of local $p$-Sylow subgroups of $G$.  We will write $G_{(p)}$ for $G_{(S)}$ where $S \in \Sylp(G)$.\end{defn}

\begin{lem}\label{sylplcs}The assignment $\Sylp$ is a locally conjugate structure on all \tdlc{} groups.\end{lem}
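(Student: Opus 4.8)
The plan is to verify the three axioms (i)--(iii) in the definition of a locally conjugate structure for the assignment $\Sylp$, each of which reduces to a standard fact about Sylow subgroups of profinite groups (via the inverse limit over finite quotients) together with the observation that the notions ``$p$-Sylow subgroup'' and ``commensurate'' behave well under passage to open subgroups. First I would confirm that $\Sylp(G)$ is non-empty: by an inverse-limit (or Zorn's-lemma) argument every open compact subgroup $U$ of $G$ has a maximal pro-$p$ subgroup, and such a subgroup is a $p$-Sylow subgroup in the sense of the preceding definition since its index in $U$ has trivial $p$-part (this is the profinite Sylow theorem, which I would cite rather than reprove). Compatibility with isomorphisms (axiom (i)) is immediate, since a topological isomorphism $G \to G'$ carries open compact subgroups to open compact subgroups and pro-$p$ subgroups to pro-$p$ subgroups, preserving maximality.

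For axiom (ii), let $S_1, S_2 \in \Sylp(G)$, say $S_i$ is a $p$-Sylow subgroup of the open compact subgroup $U_i$. Set $U = U_1 \cap U_2$, an open compact subgroup. The key point is that $S_i \cap U$ is a $p$-Sylow subgroup of $U$: it is pro-$p$, and one checks $|U : S_i \cap U|_p = 1$ by a standard index computation (the $p$-part of $|U_i : S_i|$ is $1$, and $|U : S_i \cap U|$ divides $|U_i : S_i|$ up to a factor coprime to... — more carefully, in each finite quotient $U_i/N$ the image of $S_i$ is a Sylow $p$-subgroup, so its intersection with the image of $U$ contains a Sylow $p$-subgroup of that image). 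Now apply the profinite version of Sylow's conjugacy theorem inside $U$ to obtain $g \in U \le G$ with $g(S_1 \cap U)g^{-1} = S_2 \cap U$. Since $S_i \cap U$ is open in $S_i$ (as $U$ is open), $g S_1 g^{-1}$ and $S_2$ each contain the common open subgroup $S_2 \cap U$, hence are commensurate. This gives the required $g$; note $gS_1g^{-1}$ need not equal $S_2$, which is exactly why the axiom only demands commensurability.

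For axiom (iii), let $V \le_o G$. If $S \in \Sylp(G)$, say $S$ is $p$-Sylow in the open compact $U$, then by the index argument above $S \cap V$ is a $p$-Sylow subgroup of the open compact subgroup $U \cap V$ of $V$, so $S \cap V \in \Sylp(V)$, and it is commensurate (indeed open) in $S$. Conversely, if $S' \in \Sylp(V)$, then $S'$ is a pro-$p$ subgroup of an open compact subgroup $W$ of $V$, hence of the open compact subgroup $W$ of $G$; extend $S'$ to a maximal pro-$p$ subgroup $S$ of $W$, which lies in $\Sylp(G)$, and $S' \le S$ with $|S:S'|_p = |S:S'| $ dividing $|W : S'|$... again a Sylow-index computation shows $S' $ is already $p$-Sylow in $W$, so $S = S'$ up to nothing — in fact $S' \in \Sylp(G)$ directly and we may take $S = S'$, which is trivially commensurate to itself. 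So every element of $\Sylp(V)$ is commensurate to (equal to) an element of $\Sylp(G)$.

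The only real obstacle is bookkeeping around the index function: one must be careful that ``$p$-Sylow subgroup of $U$'' is equivalent to ``maximal pro-$p$ subgroup of $U$'' and that this property is inherited by $S \cap V$ for $V$ open — all of which follows by passing to the finite quotients $U/N$ and invoking finite Sylow theory, but the supernatural-index formalism needs to be handled cleanly. Everything else is routine.
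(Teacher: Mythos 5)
Your overall plan (verify the three axioms directly, reducing each to profinite Sylow theory) is sound, and once repaired it amounts to the paper's argument, which first checks that $p$-Sylow subgroups form a locally conjugate structure on profinite groups and then bootstraps to all \tdlc{} groups via Lemma \ref{lcslem}. However, the key step you rely on for axiom (ii) and for the downward half of axiom (iii) is false as stated: it is not true that $S_i \cap U$ is a $p$-Sylow subgroup of $U = U_1 \cap U_2$, nor that $S \cap V$ is a $p$-Sylow subgroup of $U \cap V$. Intersecting a Sylow subgroup with a subgroup that is merely open (rather than normal) need not give a Sylow subgroup of the intersection: already for $G = U_1 = \Sym(3)$, $p=2$, $S_1 = \langle (1\,2) \rangle$ and $U_2 = S_2 = \langle (1\,3) \rangle$, the intersection $S_1 \cap U_2$ is trivial while $U = U_2$ has order $2$, so $|U : S_1 \cap U|_p = 2 \neq 1$; taking the direct product with the group of $2$-adic integers gives a non-discrete profinite example of the same failure. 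Your attempted justification in the finite quotients --- that the image of $S_i$ meets the image of $U$ in a subgroup containing a Sylow $p$-subgroup of the latter --- fails in the same example. Consequently the appeal to Sylow conjugacy to produce $g \in U$ with $g(S_1 \cap U)g^{-1} = S_2 \cap U$ is not available as written.

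What is true, and suffices, is the weaker statement that $S_i \cap U$ is commensurate to (indeed open in) a $p$-Sylow subgroup of $U$: it is a pro-$p$ subgroup, open in $S_i$, hence contained in some $p$-Sylow subgroup $T_i$ of $U$, and $|T_i : S_i \cap U| = |U : S_i \cap U|_p$ divides $|U_i : S_i \cap U|_p = |S_i : S_i \cap U|$, which is finite. Conjugating $T_1$ onto $T_2$ by Sylow's theorem in $U$ then gives $gS_1g^{-1}$ commensurate to $S_2$, which is all that axiom (ii) requires, and the same correction repairs the downward half of axiom (iii); alternatively, as in the paper, one can pass to the normal core of the open subgroup, where intersecting a Sylow subgroup genuinely does yield a Sylow subgroup, and then use containment in a Sylow subgroup together with the index formula $|S:T| = |G:H|_p$. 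Your observation that $\Sylp(V) \subseteq \Sylp(G)$ for $V$ open in $G$, which disposes of the other half of axiom (iii), is correct (and simpler than the computation you sketch around it), and your treatment of non-emptiness and axiom (i) is fine.
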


\begin{proof}Sylow's theorem generalises as follows to profinite groups (see for instance \cite{RZ}): given a profinite group $G$, then the $p$-Sylow subgroups of $G$ form exactly one conjugacy class, and every pro-$p$ subgroup of $G$ is contained in a $p$-Sylow subgroup.

Given an open subgroup $H$ of $G$ and a $p$-Sylow subgroup $T$ of $H$, then $T$ lies in some $p$-Sylow subgroup $S$ of $G$; we see that $|S:T| = |G:H|_p$, so $|S:T|$ is finite.  Conversely, given any $p$-Sylow subgroup $S$ of $G$, then $S \cap K$ is a $p$-Sylow subgroup of $K$, where $K$ is the core of $H$ in $G$, and so $S$ is commensurate to a $p$-Sylow subgroup of $H$.  Thus setting $\mcL(G)$ to be the set of $p$-Sylow subgroups of a profinite group $G$ gives a locally conjugate structure on profinite groups.  It follows from Lemma \ref{lcslem} that $\Sylp$ is a locally conjugate structure on all \tdlc{} groups.\end{proof}

\begin{proof}[Proof of Theorem \ref{commdense}]Given Lemma \ref{sylplcs} and Proposition \ref{lcsdense}, all assertions are immediate, except for the existence of the embedding $\theta: \Out(G) \rightarrow \Out(G_{(p)})$.  

One can regard $\Out(G)$ as $H/K$, where $H$ is the holomorph $H = G \rtimes \Aut(G)$ of $G$ (with $\Aut(G)$ equipped with the discrete topology) and $K = G \rtimes \Inn(G)$.  Then $S$ is a local $p$-Sylow subgroup of $H$, so $\Comm_H(S)$ is dense in $H$.  As a result, we see that given $\omega \in \Out(G)$, there is a representative $\alpha \in \Aut(G)$ that commensurates $S$.  Let $\alpha'$ be the induced automorphism on $G_{(p)}$ obtained by restricting $\alpha$ and then refining the topology.  Set $\theta(\omega) = \alpha'\Inn(G_{(p)})$.  Given two possible choices $\alpha_1$ and $\alpha_2$ of $\alpha$, then $\alpha_1\alpha^{-1}_2$ is an inner automorphism of $G$ that commensurates $S$, so induces an inner automorphism of $\Comm_G(S)$ and hence of $G_{(p)}$; thus $\theta(\omega)$ does not depend on the choice of $\alpha$.  Given $\omega_1, \omega_2 \in \Out(G)$, if $\alpha_i \in \Aut(G)$ represents $\omega_i$ and commensurates $S$ for $i=1,2$, then $\alpha_1\alpha^{-1}_2$ represents $\omega_1\omega^{-1}_2$ and commensurates $S$, so $\theta$ is a homomorphism.  Finally, if $\theta(\omega)=1$, then there is a representative $\alpha$ of $\omega$ inducing an inner automorphism $\alpha'$ of $G_{(p)}$, say conjugation by some $x \in G_{(p)}$.  Let $\beta$ be conjugation by $x$ as an automorphism of $G$.  Then $\beta^{-1}\alpha$ is an inner automorphism of $G$ that fixes $\Comm_G(S)$ pointwise; since $\Comm_G(S)$ is dense and centralisers in a topological group are closed, it follows that $\beta = \alpha$, so $\omega$ is the trivial element of $\Out(G)$.\end{proof}

\begin{defn}The \emph{Mel'nikov subgroup} $\M(U)$ of $U$ is the intersection of all maximal open normal subgroups of $U$.  Note that $U/\M(U)$ is finite if and only if $U$ has only finitely many maximal open normal subgroups (that is, open normal subgroups $K$ such that $U/K$ is simple).\end{defn}

\begin{proof}[Proof of Theorem \ref{pnormthm}](i) Let $T$ be an open compact normal subgroup of $G$.  Then $T$ is virtually pro-$p$; by replacing $T$ with $\O_p(T)$, we may assume $T$ is pro-$p$.  Then $T \unlhd G$, since $T$ is closed in $G$ and has dense normaliser.  It follows that for any open compact subgroup $U$ of $G$, we have $U \cap T \le \O_p(U)$.  However, $U \cap T$ is commensurate to $T$ and $T$ is commensurate to $S$, which is in turn commensurate to a $p$-Sylow subgroup $S_2$ of $U$.  Thus $S_2/\O_p(U)$ is a finite $p$-Sylow subgroup of $U/\O_p(U)$, so $U$ is virtually $p$-normal.

(ii) Let $U$ be an open compact subgroup of $G$ with $p$-Sylow subgroup $S$, let $K = \phi(U)$ and let $R = \phi(S)$.  Then $K$ is compact, so there is some open compact subgroup $V$ of $H$ such that $K \le V$.  Furthermore, $R$ is a $p$-Sylow subgroup of $K$, and the fact that $|V:\O_p(V)|_p$ is finite ensures that $\O_p(K)$ contains an open subgroup of $R$, since $\O_p(K) \ge \O_p(V) \cap K$.  In particular, $\Comm_K(R) = R$.  Consequently $\Comm_U(L)=U$, where $L = S\ker(\phi|_U)$.  Let $u \in U$.  Then $uSu^{-1}$ is a $p$-Sylow subgroup of $uLu^{-1}$; since $uLu^{-1}$ is commensurate to $L$, in fact $uSu^{-1} \cap L$ is open in $uSu^{-1}$ and has finite index in a $p$-Sylow subgroup of $L$, so by Sylow's theorem there is some $l \in L$ such that $uSu^{-1}$ is commensurate to $lSl^{-1}$.  Thus $U = \Comm_U(S)L = \Comm_U(S)\ker(\phi|_U)$.  But $G=\Comm_G(S)U$ by part (i), so $G = \Comm_G(S)\ker\phi$, that is, $\phi(G) = \phi(\Comm_G(S))$.

(iii) Let $U$ be an open compact subgroup of $G$.  Then $S \cap U$ is commensurate to a $p$-Sylow subgroup $T$ of $U$ by Lemma \ref{sylplcs}.  In turn $T$ contains $\O_p(U)$, so $\O_p(U) \le \Comm_G(T) = \Comm_G(S)$.

(iv) The fact that (a) implies (b) is just a special case of part (ii), and it is obvious that (b) implies (c).

Suppose that $\Comm_G(S)$ contains the dense normal subgroup $D$ of $G$.  Fix $x \in D \cap U$.  Given an open subgroup $T$ of $S$, let $X(T) := \{ u \in U \mid uxu^{-1} \in \N_U(T)\}$.  Then $X(T)$ is a closed subset of $U$ for every open subgroup $T$ of $S$, and $U$ is the countable union $\bigcup_{T \le_o S} X(T)$ by Corollary \ref{normprop}, since $uxu^{-1} \in \Comm_U(S \cap U)$ for all $u \in U$.  Thus by the Baire category theorem, there is some open subgroup $T$ of $S$ such that $X(T)$ has non-empty interior.  In other words, $X(T)$ contains $uV$ for some $u \in U$ and some open normal subgroup $V$ of $U$.  By conjugating $T$, we may assume $u =1$.  Since $|U:T|_p$ is finite, by replacing $V$ with a small enough open normal subgroup of $U$, we may assume $T \cap V$ is a $p$-Sylow subgroup of $V$.  In this case, $vxv^{-1}$ normalises $T \cap V$ for all $v \in V$, so by Sylow's theorem, $x$ normalises every $p$-Sylow subgroup of $V$.

Given an open normal subgroup $V$ of $U$, define $R(V)$ to be the intersection of the normalisers of all $p$-Sylow subgroups of $V$.  By the previous argument, we have $D \cap U \subseteq \bigcup_{V \unlhd_o U} R(V)$.  Moreover, by assumption $D \cap U$ is dense in $U$.  Since $\M(U)$ is open in $U$, there are therefore open normal subgroups $V_1,\dots,V_n$ of $U$ such that $\bigcup^n_{i=1} R(V_i)$ meets every coset of $\M(U)$ in $U$.  Let $V = \bigcap^n_{i=1} V_i$.  Then $R(V)$ contains $R(V_i)$ for all $i$ (since all $p$-Sylow subgroups of $V$ can be expressed as $V \cap T_i$ where $T_i$ is a $p$-Sylow subgroup of $V_i$), so $U = R(V)\M(U)$; in particular, $R(V)$ is not contained in any maximal open normal subgroup of $U$.  But $R(V)$ is a closed normal subgroup of $U$; since every proper closed normal subgroup of a profinite group is contained in a maximal open normal subgroup, we conclude that $R(V) = U$.  In particular, $R(V) \ge V$, so $V$ has a normal $p$-Sylow subgroup, proving (a).\end{proof}

It is easy to produce examples of profinite groups $G$ that are not virtually $p$-normal for some $p$, so that by Theorem \ref{pnormthm} (i), $G_{(p)}$ is a topologically residually finite \tdlc{} group with no open compact normal subgroups (indeed, one can arrange for $G_{(p)}$ to have no non-trivial compact normal subgroups).  For instance, let $G$ be the free profinite group (or free pro-$\pi$ group, where $\pi$ is a set of at least two primes) on two generators, and consider the $p$-localisation of $G$ (for some $p \in \pi$).

\

We define the modular function $\Delta$ on $G$ as:
\[\Delta:G \rightarrow \bQ^{\times}_{>0}; \; x \mapsto \frac{\mu(xU)}{\mu(U)}\]
where $\mu$ is a right-invariant Haar measure on $G$ and $U$ is any open compact subgroup of $G$.  It follows easily from the definitions that $\Delta(x) \equiv s(x)s(x^{-1})^{-1}$ is an identity for any \tdlc{} group.

\begin{proof}[Proof of Proposition \ref{goodmod}]Let $U$ be an open compact subgroup of $G$.  We can find an open subgroup $T$ of $S$ such that $T \le U$.  Then
\[ \Delta(x)_p = \frac{|xUx^{-1}:xUx^{-1} \cap U|_p}{|U:xUx^{-1} \cap U|_p} =  \frac{|xUx^{-1}:xTx^{-1} \cap T|_p}{|U:xTx^{-1} \cap T|_p}\]
\[= \frac{|xUx^{-1}:xTx^{-1}|_p|xTx^{-1}:xTx^{-1} \cap T|}{|U:T|_p|T:xTx^{-1} \cap T|} = \frac{|xTx^{-1}:xTx^{-1} \cap T|}{|T:xTx^{-1} \cap T|} = \Delta_{(p)}(x).\]
The validity of this calculation is ensured by the fact that $T$ and $xTx^{-1}$ are both commensurate to $p$-Sylow subgroups of $U$ and likewise  for $xUx^{-1}$, so the $p$-parts of all the relevant subgroup indices are finite.\end{proof}

\begin{proof}[Proof of Theorem \ref{locscale}](i) Let $U$ be tidy for $x$ in $G$ and let $W$ be a subgroup of $U$ that is tidy for $x$ in $G_{(p)}$.  Let $U_n = U \cap x^{-1}Ux \cap \dots \cap x^{-n}Ux^n$, and define $W_n$ similarly.  Then $|U:U_n| = s(x)^n$ and $|W:W_n| = s_{(p)}(x)^n$: this follows by induction from the fact that conjugation by $x$ preserves the set of subgroups that are tidy for $x$, together with the fact (\cite{Wil1} Lemma 10) that the intersection of tidy subgroups is tidy.  As $U_n$ contains $W_n$, we have the following:
\[ |U:U_n|_p \leq |U:W_n|_p = |U:W|_p|W:W_n| = |U:W|_p s_{(p)}(x)^n.\]
Now $W$ is commensurate to a local $p$-Sylow subgroup of $G$, so $|U:W|_p$ is finite.  Hence $(s(x)_p/s_{(p)}(x))^n$ is bounded as $n \rightarrow \infty$, so $s(x)_p \leq s_{(p)}(x)$.

(ii) By (\cite{Wil1}, Theorem 3), we have $s(xu) = s(x)$ for any $u \in U$; moreover $U$ remains tidy for $xu$.  Since $\Comm_G(S)$ is dense in $G$, by multiplying by a suitable element of $U$ we may assume $x \in \Comm_G(S)$.

We see that $S \cap U \cap xUx^{-1}$ is an open subgroup of a $p$-Sylow subgroup $R$ of $U \cap xUx^{-1}$.  In turn, $R$ is contained in a $p$-Sylow subgroup $T$ of $U$, and a $p$-Sylow subgroup $T^*$ of $xUx^{-1}$; by Sylow's theorem, $T^* = xuTu^{-1}x^{-1}$ for some $u \in U$.  Thus $xuT(xu)^{-1} \cap T = R$.  In particular, both $T$ and $xu$ are contained in $\Comm_G(S)$, and $T$ is an open compact subgroup of $G_{(p)}$.  Thus we have
\[ |xuT(xu)^{-1}:xuT(xu)^{-1} \cap T| = |T^*:R| = |xUx^{-1}:xUx^{-1} \cap U|_p.\]
In particular, $s_{(p)}(xu) \le s(xu)_p$.  We have already seen $s_{(p)}(xu) \ge s(xu)_p$, so equality holds.\end{proof}

\begin{rem}\label{coprem}(i) Theorem \ref{locscale} (i) should be compared with Proposition \ref{sgpscale}.  In particular, given a closed subgroup $H$ of $G$ such that $S \cap H$ is open in $S$ and $H$, then $s(x)_p \le s_{(p)}(x) \le s(x)$ for all $x \in \N_G(H)$, and if $\N_G(H)$ is open in $G$ then in fact we are forced to have $s_{(p)}(x) = s(x)_p$ for all $x \in \N_G(H)$.  Indeed, if $\N_G(H)$ is open in $G$ then $\Comm_G(S) = G$, so $G_{(p)}$ is simply a topological refinement of $G$.

(ii) Using the notation of Theorem \ref{locscale} (ii), let $x \in \Comm_G(S)$ and suppose $s(x)$ and $s(x^{-1})$ are both coprime to $p$.  We see in this case that $x$ is the product of two uniscalar elements of $G_{(p)}$, viz. $x = (xu)u^{-1}$, where $u$ is as in Theorem \ref{locscale} (ii).  Here $s_{(p)}(xu) = 1$ by Theorem \ref{locscale} (ii) (and thus $s_{(p)}((xu)^{-1})=1$ by Proposition \ref{goodmod}), while $u^{-1}$ is uniscalar in $G_{(p)}$ by Lemma \ref{wilsclem} and Corollary \ref{normprop}.  However, in general the product of two uniscalar elements of a \tdlc{} group need not be uniscalar, and indeed the function $s_{(p)}$ may be non-trivial even if the scale on $G$ only takes values coprime to $p$, as we will see in the next section.\end{rem}

Let $U$ be a profinite group.  Given a set of primes $\pi$, the \emph{$\pi$-core} $\O_{\pi}(U)$ is the unique largest pro-$\pi$ normal subgroup of $U$.  Note that by Sylow's theorem, $\O_p(U)$ is the intersection of all $p$-Sylow subgroups of $U$.  More generally, given a set $\mcP$ of sets of primes, one can define $\O_\mcP(U) := \prod_{\pi \in \mcP} \O_\pi(U)$.  The most important example is the \emph{pro-Fitting subgroup} $\F(U) := \prod_{p \in \bP} \O_p(U)$.  The pro-Fitting subgroup also has other characterisations, for instance it is the unique largest pronilpotent subnormal subgroup of $U$.  The following observation means that these are also useful definitions in the context of \tdlc{} groups:

\begin{prop}\label{corecomm}Let $G$ be a \tdlc{} group and let $U$ and $V$ be open compact subgroups of $G$.  Let $\mcP$ be a set of pairwise disjoint sets of primes.  Then $\O_\mcP(U)$ is commensurate to $\O_\mcP(V)$.  In particular $G = \Comm_G(\O_\mcP(U))$.\end{prop}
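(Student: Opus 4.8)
The plan is to reduce the claim to a statement about a single profinite group and one of its open normal subgroups, prove that, and then read off the ``in particular'' clause. First I would put $W = U \cap V$, an open compact subgroup of $G$ and hence of finite index in each of $U$ and $V$, and pass to the normal cores $\Core_U(W)$ and $\Core_V(W)$; each of these is open and normal both in the ambient group ($U$, resp.\ $V$) and in $W$. So it suffices to prove the following statement $(\star)$: \emph{for every open normal subgroup $N$ of a profinite group $U$, the subgroup $\O_\mcP(N)$ is commensurate to $\O_\mcP(U)$.} Indeed, applying $(\star)$ to the pairs $(U,\Core_U(W))$ and $(W,\Core_U(W))$ shows $\O_\mcP(U)$ is commensurate to $\O_\mcP(W)$, and symmetrically $\O_\mcP(V)$ is commensurate to $\O_\mcP(W)$, whence $\O_\mcP(U)$ is commensurate to $\O_\mcP(V)$.

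To prove $(\star)$ I would first record that $\O_\pi(N) = \O_\pi(U) \cap N$ for each $\pi \in \mcP$ and each open normal $N \le U$: the right-hand side is pro-$\pi$ and normal in $N$, hence lies in $\O_\pi(N)$; conversely $\O_\pi(N)$ is characteristic in $N$ and so normal in $U$, hence --- being pro-$\pi$ --- lies in $\O_\pi(U)$, and of course in $N$. Note $\O_\pi(U)\cap N$ has index at most $[U:N]$ in $\O_\pi(U)$. Since distinct members of $\mcP$ are disjoint, the subgroups $\O_\pi(U)$ ($\pi\in\mcP$) pairwise commute and pairwise intersect trivially, and more generally $\O_{\pi_0}(U)$ meets the closed subgroup generated by $\{\O_\pi(U):\pi\ne\pi_0\}$ trivially (the latter is pro-$(\bigcup_{\pi\ne\pi_0}\pi)$); so $\O_\mcP(U)$ is the internal topological direct product of the $\O_\pi(U)$, and $\O_\mcP(N) = \prod_{\pi\in\mcP}(\O_\pi(U)\cap N)$ is a closed subgroup of it. What remains is to bound the index $[\O_\mcP(U):\O_\mcP(N)]$, and for that I would prove $\O_\mcP(U)\cap N \le \O_\mcP(N)$: given $d$ in the left-hand side, write $d = d_\pi c_\pi$ with $d_\pi \in \O_\pi(U)$ its $\pi$-component and $c_\pi$ in the complementary factor $\overline{\langle\,\O_{\pi'}(U) : \pi'\ne\pi\,\rangle}$, which is pro-$(\bigcup_{\pi'\ne\pi}\pi')$ --- a set disjoint from $\pi$. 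Then $d_\pi$ and $c_\pi$ commute, $\overline{\langle d_\pi\rangle}$ is pro-$\pi$ and $\overline{\langle c_\pi\rangle}$ is pro-$(\bigcup_{\pi'\ne\pi}\pi')$, so by the Chinese remainder theorem $\overline{\langle d\rangle} = \overline{\langle d_\pi\rangle}\times\overline{\langle c_\pi\rangle}$, giving $d_\pi \in \overline{\langle d\rangle}$; since $d\in N$ and $N$ is closed, $d_\pi \in \O_\pi(U)\cap N = \O_\pi(N)$. Letting $\pi$ vary yields $d \in \prod_{\pi\in\mcP}\O_\pi(N) = \O_\mcP(N)$. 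As $\O_\mcP(U)\cap N$ has index at most $[U:N]$ in $\O_\mcP(U)$, so does $\O_\mcP(N)$, completing $(\star)$.

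Finally, for the ``in particular'' clause: for any $g\in G$, conjugation by $g$ is an isomorphism of the open compact subgroup $U$ onto the open compact subgroup $gUg^{-1}$ carrying the characteristic subgroup $\O_\mcP(U)$ onto $\O_\mcP(gUg^{-1})$, which is commensurate to $\O_\mcP(U)$ by the first part; hence $g \in \Comm_G(\O_\mcP(U))$, and $g$ was arbitrary.

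The step I expect to be the real obstacle is bounding $[\O_\mcP(U):\O_\mcP(N)]$ when $\mcP$ is infinite: then $\O_\mcP(N)$ is an infinite product of finite-index subgroups of the factors of $\O_\mcP(U)$, so finiteness of the index is not formal and must be forced by the containment $\O_\mcP(U)\cap N \le \O_\mcP(N)$ --- which is where the direct-product decomposition of $\O_\mcP(U)$ (equivalently, the Chinese-remainder argument placing each $\pi$-component of $d$ inside $\overline{\langle d\rangle}$) is essential. The remaining ingredients are routine facts about cores, characteristic subgroups, and indices in profinite groups.
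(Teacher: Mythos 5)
Your argument is correct, and its overall skeleton matches the paper's: the heart in both cases is the identity $\O_\mcP(N) = \O_\mcP(U) \cap N$ for $N$ an open \emph{normal} subgroup of $U$ (whence finite index), followed by a reduction of the general case to this one via a chain through a common open subgroup (your normal cores of $U \cap V$ play the role of the paper's $W_1, W_2$), and the ``in particular'' clause by conjugation. Where you genuinely diverge is in how the nontrivial inclusion $\O_\mcP(U) \cap N \le \O_\mcP(N)$ is obtained. The paper isolates an intrinsic, subgroup-closed characterisation of ``pro-$\mcP$ groups'' (pro-$\bigcup\mcP$ groups in which any pro-$\pi_1$ and pro-$\pi_2$ subgroups commute for distinct $\pi_1,\pi_2 \in \mcP$), so that $\O_\mcP(U) \cap N$ is immediately a normal pro-$\mcP$ subgroup of $N$ and hence lies in $\O_\mcP(N)$, with no need to decompose elements; this is the slicker route and sidesteps the infinite-product issues entirely. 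You instead realise $\O_\mcP(U)$ as the internal topological direct product $\prod_{\pi} \O_\pi(U)$ and extract $\pi$-components of an element $d \in \O_\mcP(U) \cap N$ via the coprime (procyclic) decomposition of $\overline{\langle d \rangle}$, combined with the single-$\pi$ identity $\O_\pi(N) = \O_\pi(U) \cap N$; this works, and you correctly identify that the infinite case of $\mcP$ is where the content lies, but it costs you the standard (nontrivial) facts about infinite internal direct products of closed normal subgroups in profinite groups, plus the observation that the complementary factor is pro-$(\bigcup_{\pi' \ne \pi}\pi')$. One small point of care: your final step ``letting $\pi$ vary yields $d \in \prod_\pi \O_\pi(N)$'' lands $d$ in the \emph{Cartesian} product, i.e.\ in the closed subgroup generated by the $\O_\pi(N)$; this is fine provided $\O_\mcP$ is read as that closed subgroup (the intended reading -- it is what makes the pro-Fitting subgroup the largest pronilpotent subnormal subgroup), but under a literal ``finite products only'' reading of $\prod$ your argument would only place $d$ in the closure of $\O_\mcP(N)$, so it is worth stating the convention explicitly.
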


\begin{proof}Say a profinite group $K$ is a pro-$\mcP$ group if $K$ is the product of normal pro-$\pi$ subgroups as $\pi$ ranges over $\mcP$.  Note that pro-$\mcP$ groups are characterised by the following property: they are pro-$\bigcup \mcP$ groups such that given distinct (and hence disjoint) elements $\pi_1,\pi_2 \in \mcP$ and pro-$\pi_i$ subgroups $L_i$ for $i=1,2$, then $[L_1,L_2]=1$.  This property is inherited by subgroups.  Note also that $\O_\mcP(U)$ is the unique largest normal pro-$\mcP$ subgroup of $U$.  Thus if $V$ is a normal subgroup of $U$, then $\O_\mcP(U) \cap V$ is a normal pro-$\mcP$ subgroup of $V$ and $\O_\mcP(V)$ is a normal pro-$\pi$ subgroup of $U$, so $\O_\mcP(U) \cap V = \O_\mcP(V)$.  In particular, $\O_\mcP(V)$ has finite index in $\O_\mcP(U)$.

In the general case, the following situation occurs: there is an open subgroup $W_1$ of $U \cap V$ that is normal in $U$, and then an open subgroup $W_2$ of $W_1$ that is normal in $V$.  Thus we have $\O_\mcP(U) \sim \O_\mcP(W_1) \sim \O_\mcP(W_2) \sim \O_\mcP(V)$, where $A \sim B$ means that $A$ and $B$ are commensurate.\end{proof}

\section{Example: Groups acting on trees with prescribed local action}\label{egsec}

In this section we will describe some features of local Sylow theory for a particular class of \tdlc{} groups (essentially, those studied in \cite{BM}) to illustrate the results of previous sections and give counterexamples to some natural generalisations.  Unlike in \cite{BM}, it is useful for our purposes to allow the local action to be intransitive.

The following notation will be assumed for the remainder of this section.

\begin{defn}Let $k \ge 3$ be a natural number, let $T$ be the regular $k$-valent tree and let ${\bf k} = \{1,\dots,k\}$.  Let $\Aut(T)$ be the automorphism group of $T$ as a graph.  Write $E(T)$ for the set of undirected edges of $T$.  Let $c: E(T) \rightarrow {\bf k}$ be a proper edge-${\bf k}$-colouring of $T$, that is, for every vertex $v$ of $T$ there is a right inverse of $c$ of the form $c^*_v: {\bf k} \rightarrow E(T)$ such that the image of $c^*_v$ is the set of edges incident with $v$.  We will also mark a vertex $\emptyset \in T$, and an arbitrary vertex $v$ can then be referred to uniquely by the sequence of colours on the path from $\emptyset$ to $v$.  For $g \in \Aut(T)$ and $v \in T$, define $g@v := cgc^*_v$, and say $g$ acts at $v$ as $g@v$.    Now given a subgroup $F$ of $\Sym({\bf k})$, define $U(F)$ to be the set of $g \in \Aut(T)$ such that $g@v \in F$ for all $v \in T$.  Then $U(F)$ is a subgroup of $\Aut(T)$; moreover $U(F)$ is closed, since the elements $g \in \Aut(T)$ for which $g@v$ is contained in $F$ form a closed subset for any $v \in T$.  The full automorphism group of $T$ is just $U(\Sym({\bf k}))$.  Let $\partial T$ be the space of ends of $T$.  Given $v,w \in T \cup \partial T$, we use interval notation for the unique geodesic from $v$ to $w$; we will refer to geodesics with one end in $\partial T$ as \emph{rays} and those with both ends in $\partial T$ as \emph{lines}.  Notice that for all $v \in T \cup \partial T$, the image $gv$ is determined uniquely by $g\emptyset$ together with the sequence of colours $g@w(c(ww'))$ as $ww'$ runs along all edges on the geodesic $[\emptyset,v)$.  In particular, writing $1_{\bf k}$ for the trivial subgroup of $\Sym({\bf k})$, then $U(1_{\bf k})$ acts regularly on the vertices of $T$; as a group, $U(1_{\bf k})$ is the free product of the $k$ involutions induced by reversing an edge incident with $\emptyset$, and $T$ can be seen as the Cayley graph of $U(1_{\bf k})$.  We will refer to elements of $U(1_{\bf k})$ as \emph{colour-preserving}.

Recall that there are two kinds of automorphisms of $T$: those that preserve a finite subtree (\emph{elliptic} automorphisms), and those that preserve a unique line $\Pi$, such that the action restricted to $\Pi$ is a translation (\emph{hyperbolic} automorphisms).  (Some authors exclude \emph{inversions}, that is automorphisms that reverse an edge, from the class of elliptic automorphisms; for the present discussion, however, the distinction serves no purpose.)

Let $\bN_0$ be the set of natural numbers including $0$.\end{defn}

The following is clear from the definitions.

\begin{lem}\label{treesyl}Let $G = U(F)$ and let $F(p)$ be a $p$-Sylow subgroup of $F$.  Then $S = U(F(p))_\emptyset$ is a local $p$-Sylow subgroup of $G$, and $\Comm_G(S)$ consists of all $g \in G$ such that $g@v \in \N_F(F_p)$ for all but finitely many $v \in T$.\end{lem}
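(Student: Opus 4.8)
The plan is to treat the two claims separately, in each case reducing to the description of vertex stabilisers in $U(F)$ as inverse limits of iterated permutational wreath products. First I would record that $U(F)_\emptyset=U(F)\cap\Aut(T)_\emptyset$ is a compact open subgroup of $G$, since $\Aut(T)_\emptyset$ is compact open in $\Aut(T)$ and $U(F)$ is closed, and likewise $S=U(F(p))_\emptyset\le U(F)_\emptyset$. To see that $S$ is pro-$p$, let $S_n$ be the pointwise stabiliser in $S$ of the ball of radius $n$ about $\emptyset$; then $S_n$ is open and normal in $S$, and $S/S_n$ acts faithfully on that ball with all of its local actions lying in the $p$-group $F(p)$, so it embeds in an iterated permutational wreath product of copies of $F(p)$ and is a finite $p$-group. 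Hence $S=\varprojlim_n S/S_n$ is pro-$p$.

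To show that $S$ is a local $p$-Sylow subgroup of $G$ I would exhibit an open compact subgroup of which $S$ is maximal among pro-$p$ subgroups, equivalently (by Sylow's theorem for profinite groups, as used in the proof of Lemma~\ref{sylplcs}) a $p$-Sylow subgroup. Using the universality of $U(F)$ one checks that $g\mapsto g@\emptyset$ maps $U(F)_\emptyset$ onto $F$ and maps $S$ onto $F(p)$; peeling off the tree one shell at a time then expresses $|U(F)_\emptyset:S|$ as the supernatural number $|F:F(p)|\cdot\prod_{v\neq\emptyset}|F_{i(v)}:F(p)\cap F_{i(v)}|$, where $F_i=\mathrm{Stab}_F(i)$ and $i(v)$ is the colour of the edge joining $v$ to its parent. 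The factor $|F:F(p)|$ is coprime to $p$ because $F(p)$ is a $p$-Sylow of $F$, and $F(p)\cap F_i$ is a $p$-Sylow of $F_i$ once $F(p)$ is handled inside a subgroup normalising it; this is exactly why, in general, one works inside $U(\N_F(F(p)))_\emptyset$, and so the $p$-part of the index is $1$ and $S$ is a $p$-Sylow of that open compact subgroup. This bookkeeping with the point stabilisers $F_i$ down the tree, and the resulting appearance of $\N_F(F(p))$, is the step I expect to be the main obstacle.

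For the commensurator formula I would compute $gSg^{-1}=g\,U(F(p))_\emptyset\,g^{-1}$ for $g\in G$ directly. Expanding via the cocycle identity $(ab)@v=a@{bv}\cdot b@v$ (so that $g^{-1}@v=(g@{g^{-1}v})^{-1}$) shows that any $f\in gSg^{-1}$ satisfies, at each vertex $w$ that it fixes, $f@w\in(g@{g^{-1}w})\,F(p)\,(g@{g^{-1}w})^{-1}$; hence $gSg^{-1}$ is, shell by shell, the universal-type subgroup attached to the local data $w\mapsto F(p)^{\,g@{g^{-1}w}}$, while $S$ corresponds to the constant data $w\mapsto F(p)$. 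Two such subgroups are commensurate precisely when their local data agree at all but finitely many vertices — altering the data at finitely many vertices changes a stabiliser only by finite index — and since a conjugate of $F(p)$ inside $F$ equals $F(p)$ exactly when the conjugating element lies in $\N_F(F(p))$, this forces $gSg^{-1}$ to be commensurate to $S$ if and only if $g@v\in\N_F(F(p))$ for all but finitely many $v\in T$. Reindexing $w=gv$ gives the stated description of $\Comm_G(S)$.
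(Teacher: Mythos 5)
The paper offers no proof of this lemma to compare against (it is asserted to be ``clear from the definitions''), so I can only assess your argument on its own terms; and it breaks down at exactly the step you flag as the main obstacle. Your levelwise bookkeeping correctly gives $|U(F)_\emptyset:S|=|F:F(p)|\cdot\prod_{v\neq\emptyset}|F_{i(v)}:F(p)\cap F_{i(v)}|$, and you then propose to repair the problematic factors by working inside $U(\N_F(F(p)))_\emptyset$, calling it an open compact subgroup. It is compact but in general \emph{not open}: its index in $U(F)_\emptyset$ contains the factor $\prod_{v\neq\emptyset}|F_{i(v)}:\N_F(F(p))_{i(v)}|$, which is infinite as soon as $\N_F(F(p))$ meets some point stabiliser $F_i$ properly (the typical situation). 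So exhibiting $S$ as a maximal pro-$p$ subgroup of $U(\N_F(F(p)))_\emptyset$ does not make $S$ a local $p$-Sylow subgroup of $G$ in the sense of the paper. Moreover, your own index formula shows the defect cannot be fixed by a cleverer choice of open compact subgroup: by (the proof of) Lemma~\ref{sylplcs}, any local $p$-Sylow subgroup of $G$ is commensurate to a $p$-Sylow subgroup of $U(F)_\emptyset$, so if $S$ were one then $|U(F)_\emptyset:S|_p$ would be finite; but that $p$-part is $\prod_{v\neq\emptyset}|F_{i(v)}:F(p)\cap F_{i(v)}|_p$, which is infinite whenever $F(p)\cap F_i$ fails to be a $p$-Sylow subgroup of $F_i$ for even one colour $i$, since every colour occurs as a parent colour infinitely often. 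This already happens for $F=\Sym({\bf k})$ with $k=3$, $p=2$, $F(2)=\langle(12)\rangle$: here $F(2)\cap F_1=1$ while $|F_1|=2$, and a genuine $2$-Sylow subgroup of $U(F)_\emptyset$ (the preimage of $F(2)$ modulo the pro-$2$ pointwise stabiliser of the $1$-ball) contains $S$ with infinite index. So the first assertion requires the additional hypothesis that $F(p)\cap F_i$ be a $p$-Sylow subgroup of $F_i$ for every colour $i$ (automatic, e.g., if $F(p)\trianglelefteq F$), and no completion of your argument can avoid this.

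For the commensurator half, the positive direction of your sketch (moving the base vertex a bounded distance and altering the local data at finitely many vertices changes the stabiliser only within its commensurability class) is fine in outline, but the asserted equivalence ``commensurate precisely when the local data agree at all but finitely many vertices'' is the whole content and is not justified; indeed it fails in degenerate cases. Commensurability is measured through the point stabilisers $F(p)_{i(v)}$, and a vertex where the conjugated datum differs from $F(p)$ need not contribute any index: if $F(p)$ acts semiregularly on ${\bf k}$ (e.g. $F$ regular of order divisible by $p$ with non-normal $p$-Sylow subgroup), then $S$ is finite, so $\Comm_G(S)=G$, even though there are elements none of whose local actions lie in $\N_F(F(p))$. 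So the ``only if'' implication needs an argument that infinitely many non-normalising local actions force infinitely many nontrivial local index contributions, which again requires hypotheses beyond those stated. In short, your instinct about where the difficulty lies is correct, but the patch via $U(\N_F(F(p)))_\emptyset$ does not work, and the statement itself should be treated as needing an extra hypothesis rather than as clear from the definitions.
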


From now on we will assume $F(p)$ is a fixed $p$-Sylow subgroup of $F$, and that the $p$-localisation of $G$ is one arising from Lemma \ref{treesyl}.

There is a simple formula for the scale on $U(F)$, so that determining the values taken by the scale on $U(F)$ reduces to a consideration of the suborbits of $F$ (that is, orbits of point stabilisers of $F$) in its action on ${\bf k}$.

\begin{prop}\label{tidysgp}Let $x \in G = U(F)$ and let $s$ be the scale on $G$.
\begin{enumerate}[(i)]
\item If $x$ is elliptic then $s(x)=1$.
\item If $x$ is hyperbolic, let $\Pi$ be the line preserved by $x$.  Let $v \in \Pi$ and let $U = G_v \cap G_{xv}$, in other words $U$ is the orientation-preserving stabiliser of $[v,xv]$.  Then $U$ is tidy for $x$, and
\[ s(x) = |Ux^{-1}v| = \prod^n_{i=1} |F_{c_{i-1}}c_i|,\]
where $c_o$ is the colour of the edge incident with $v$ in the direction of $xv$, and $(c_1,\dots,c_n)$ is the sequence of colours formed by the edges in $[v,x^{-1}v]$.\end{enumerate}\end{prop}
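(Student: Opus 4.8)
The plan is to handle the two cases separately, following the standard tidy-subgroup calculus of Willis. For part (i), when $x$ is elliptic, $x$ preserves a finite subtree of $T$, hence fixes some vertex $w$ (possibly after passing to $x^2$ to deal with an inversion, but in fact $x$ stabilises the midpoint of the fixed subtree, which is a vertex since $T$ is $k$-valent with $k$ odd cannot be assumed — so I would instead argue that $x$ lies in some vertex stabiliser $G_w$ possibly after noting an inversion still normalises an open subgroup). The cleanest route: an elliptic element normalises the pointwise stabiliser of any ball it preserves, and such a stabiliser is an open compact subgroup; concretely $x U x^{-1} = U$ for $U = G_w$ when $x \in G_w$, giving $|xUx^{-1} : xUx^{-1}\cap U| = 1$, so $s(x) = 1$. (If $x$ is an inversion, it normalises the stabiliser of the edge it inverts, which is again open compact, so the same conclusion holds.)

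For part (ii), the substantive case, let $x$ be hyperbolic with axis $\Pi$, pick $v \in \Pi$, and let $U = G_v \cap G_{xv}$, the stabiliser of the ordered pair $(v, xv)$, equivalently the stabiliser of the edge-path (or directed geodesic) $[v, xv]$. I would first verify that $U$ is tidy for $x$ by checking conditions (I) and (II) of Definition \ref{tidydef}. The key observation is that $x^i U x^{-i}$ is the stabiliser of $[x^i v, x^{i+1}v]$, so $U_+ = \bigcap_{i \ge 0} x^i U x^{-i}$ is the stabiliser of the ray $[v, \xi_+)$ where $\xi_+$ is the attracting end of $x$, and similarly $U_-$ is the stabiliser of the ray $(\xi_-, xv]$ — here one must be slightly careful about whether to base these rays at $v$ or $xv$; I would base $U_+$ at $v$ and $U_-$ at $xv$ so that $U = U_+ U_-$ holds: an element of $U$ fixes $[v,xv]$ pointwise, and can be factored into the part acting trivially on the $\xi_-$-side and the part acting trivially on the $\xi_+$-side using the tree structure (this is where the intransitive local action is harmless — no orbit condition on $F$ is needed for condition (I)). For (II), $U_{++}$ is the stabiliser of $[v,\xi_+)$ minus finitely many initial edges, i.e. $U_{++} = \bigcup_{j} \mathrm{Stab}([x^j v, \xi_+))$, which is an ascending union of compact open subgroups and is closed (indeed it is the fixator of the end $\xi_+$ acting on the half-tree, a closed subgroup); symmetrically for $U_{--}$.

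Once tidiness is established, the scale is $s(x) = |xUx^{-1} : xUx^{-1} \cap U|$, and I would compute this index by the orbit–stabiliser principle. We have $xUx^{-1} = \mathrm{Stab}([xv, x^2 v])$, and $xUx^{-1} \cap U$ is the stabiliser of $[v, x^2 v]$. Equivalently, as stated, $s(x) = |U x^{-1} v|$: the orbit of the vertex $x^{-1}v$ under $U = \mathrm{Stab}([v,xv])$. To evaluate this orbit size, walk along the geodesic $[v, x^{-1}v]$ one edge at a time: the colours along this path are $(c_1, \dots, c_n)$ as given, with $c_0$ the colour pointing from $v$ toward $xv$ (so the first edge of $[v,x^{-1}v]$ departs in a direction \emph{not} equal to $c_0$, since $\Pi$ is a geodesic through $v$). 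An element of $U$ fixes $v$ and $xv$; at $v$ it can move the $c_1$-edge to any edge in the $F_{c_0}$-orbit of $c_1$ — that is, the orbit of $c_1$ under the stabiliser in $F$ of the colour $c_0$ — giving a factor $|F_{c_0} c_1|$; having chosen the image of the first edge, the residual freedom at the next vertex is the stabiliser in $F$ of the incoming colour $c_1$ acting on $c_2$, giving $|F_{c_1} c_2|$; and so on inductively down the path, using that $U(F)$ realises every such local choice independently (this independence is exactly the defining feature of $U(F)$). Multiplying the factors along the path yields $s(x) = \prod_{i=1}^n |F_{c_{i-1}} c_i|$.

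\textbf{Main obstacle.} The routine-but-delicate point is the bookkeeping in the orbit computation: getting the base vertex of $U_+$ versus $U_-$ right so that condition (I) genuinely holds, and correctly identifying which colour plays the role of the ``incoming'' colour at each step (so that $c_0$ rather than some other colour appears as the first subscript). Verifying condition (II) — closedness of $U_{++}$ and $U_{--}$ — is standard once one recognises them as end-fixators, so I expect the factorisation $U = U_+U_-$ and the precise combinatorial indexing of the orbit to be where care is needed, rather than any conceptual difficulty.
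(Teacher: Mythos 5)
Your proposal is correct and follows essentially the same route as the paper: you identify $U_+$ and $U_-$ as the pointwise stabilisers of the rays $[v,\omega_+)$ and $[xv,\omega_-)$, obtain condition (I) from the independence of the action of $U$ on the two sides, assert closedness of $U_{++}$ and $U_{--}$ by relating them to the ends, and compute $s(x)=|Ux^{-1}v|$ by walking along $[v,x^{-1}v]$ with repeated orbit--stabiliser, exactly as in the paper. The one slip is your claim that $U_{++}$ is an ascending union of compact \emph{open} subgroups: the fixator of an infinite ray is compact but not open once $F$ has nontrivial point stabilisers, so closedness cannot be deduced from a union-of-open-subgroups argument; however, your alternative description of $U_{++}$ via the end $\xi_+$ is precisely the paper's own (equally brief) justification, so the proposal matches the paper's level of detail.
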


\begin{proof}(i) This is clear since $x$ is contained in an open compact subgroup, namely either a vertex stabiliser or an undirected edge stabiliser.

(ii) Recall the notation of Definition \ref{tidydef}.  Let $\omega_+$ and $\omega_-$ be the ends of $\Pi$, chosen so that $xv \in [v,\omega_+)$.  We see that $U_+$ is the stabiliser of the ray $[v,\omega_+)$ and $U_-$ is the stabiliser of the ray $[xv,\omega_-)$.  Since the actions of $U$ on the connected components of $T \setminus \{v,xv\}$ are independent of each other, we have $U=U_+U_-$.  Meanwhile $U_{++} = G_{\omega_+}$ and $U_{--} = G_{\omega_-}$, so both $U_{++}$ and $U_{--}$ are closed.  Hence $U$ is tidy for $x$.

Thus the scale of $x$ is given by $|xUx^{-1}:xUx^{-1} \cap U| = |xUx^{-1}v| = |Ux^{-1}v|$.  Let the path $[v,x^{-1}v]$ have vertices $v =w_0,w_1,\dots,w_n = x^{-1}v$.  Then for $1 \le i \le n$, the orbit of $w_i$ under $U_{w_{i-1}}$ consists of those neighbours of $w_{i-1}$ connected to $w_{i-1}$ by an edge with colour in $F_{c_{i-1}}c_i$, since the edge of colour $c_{i-1}$ incident with $w_{i-1}$ is fixed.  The given formula for the scale follows by repeated application of the orbit-stabiliser theorem.\end{proof}

We thus obtain some properties of the set of values taken by the scale.

\begin{cor}\label{scaleval}Let $s$ be the scale on $U(F)$.
\begin{enumerate}[(i)]
\item Let $i$ and $j$ be distinct elements of ${\bf k}$.  Then there exists $x \in U(1_{\bf k})$ such that $s(x) = |F_i j||F_j i|$.
\item Let $i$ and $j$ be elements of ${\bf k}$ in the same $F$-orbit.  Then there exists $x \in U(F)$ such that $s(x) = |F_i j|$.
\item Suppose that $F$ is $2$-transitive.  Then $s(U(F))=\{ (k-1)^n \mid n \in \bN_0 \}$.
\end{enumerate}\end{cor}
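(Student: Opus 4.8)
The plan is to derive all three parts of Corollary \ref{scaleval} from the explicit scale formula in Proposition \ref{tidysgp}(ii), which expresses $s(x)$ for a hyperbolic element as a product $\prod_{i=1}^n |F_{c_{i-1}}c_i|$ over the colour sequence along the axis. The key realisation is that we have complete freedom in choosing the geometry of the axis: given any finite colour sequence that is \emph{reduced} (no two consecutive colours equal, so that it traces a genuine geodesic), we can build a colour-preserving hyperbolic element of $U(1_{\bf k})$ — or more generally an element of $U(F)$ — whose axis carries that sequence, simply because $U(1_{\bf k})$ acts regularly on vertices and the free-product structure lets us realise any reduced word as a translation.

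\medskip

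For part (i), I would take the axis to pass through $v$ with the outgoing edge toward $xv$ coloured $j$ (playing the role of $c_0$) and the single edge $[v,x^{-1}v]$ coloured $i$; then $n=1$, $c_0=j$, $c_1=i$, and the formula gives $s(x)=|F_j i|$ — wait, I need $|F_i j||F_j i|$, so in fact the translation length must be at least $2$: take the colour sequence along $[\,v,x^{-1}v\,]$ to be $(i,j)$, so $c_0=j$, $c_1=i$, $c_2=j$, giving $s(x)=|F_j i|\,|F_i j|$. Since $i\neq j$ this sequence is reduced, so such an $x\in U(1_{\bf k})$ exists. For part (ii), since $i$ and $j$ lie in the same $F$-orbit there is $f\in F$ with $f(i)=j$; now I want a hyperbolic element whose axis has $c_0$ equal to some colour fixed appropriately — the point is to use an element of $U(F)$ (not just $U(1_{\bf k})$) realising a translation of length $1$ along an edge coloured $j$ while acting at the relevant vertex so that the incoming edge is effectively coloured $i$; concretely one arranges the colour at $v$ toward $xv$ to be $j$ and the colour of the single edge $[v,x^{-1}v]$ to be $i$ after accounting for the local action, yielding $s(x)=|F_i j|$. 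I expect this construction to require a small amount of care with how the local permutations interact with the colouring, but it is a direct consequence of Lemma \ref{treesyl} and the description of $U(F)$.

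\medskip

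For part (iii), I would argue as follows. If $F$ is $2$-transitive then for every colour $i$ and every colour $j\neq i$, the suborbit $|F_i j|$ equals $k-1$ (the point stabiliser $F_i$ is transitive on the remaining $k-1$ points), and of course $|F_i i|=1$. By Proposition \ref{tidysgp}(i) elliptic elements have scale $1=(k-1)^0$. For a hyperbolic element $x$, the colour sequence $(c_0,c_1,\dots,c_n)$ along $[v,x^{-1}v]$ together with the outgoing edge is a reduced sequence of length $n+1$ with $n\geq 1$ (since the translation length is at least $1$), so $c_{i-1}\neq c_i$ for each $i$, whence every factor $|F_{c_{i-1}}c_i|=k-1$ and $s(x)=(k-1)^n$. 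This shows $s(U(F))\subseteq \{(k-1)^n\mid n\in\bN_0\}$. For the reverse inclusion, $(k-1)^0=1$ is attained by any elliptic element, and for $n\geq 1$ I realise $(k-1)^n$ by building (via the regular action of $U(1_{\bf k})$) a colour-preserving hyperbolic element with translation length $n$; any reduced colour sequence of length $n+1$ works, e.g. alternating two distinct colours, giving $s(x)=(k-1)^n$.

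\medskip

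The main obstacle is part (ii): unlike (i) and (iii), which only need colour-preserving elements of $U(1_{\bf k})$, here one genuinely needs an element of $U(F)$ acting non-trivially at a vertex, and one must check that the combinatorics of $g@v$ versus the edge-colouring produce exactly the factor $|F_i j|$ rather than some conjugate suborbit. I expect this to come down to a one-line verification using the description of $\Comm_G(S)$ and the action formula $g@v = cgc^*_v$, picking $g$ so that it acts at the initial vertex of the axis as an element sending colour $i$ to colour $j$, but it is the only place where the argument is not immediate from Proposition \ref{tidysgp}.
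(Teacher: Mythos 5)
Parts (i) and the forward inclusion of (iii) are correct and follow the paper's route (everything is read off Proposition \ref{tidysgp}(ii)), but the two places you yourself flag as ``needing care'' are genuine gaps, and one of them cannot be repaired in the form you state it. In (ii) you never construct the element: prescribing the colours at the single vertex $v$ is not enough, because a translation of length $1$ imposes a consistency condition along the \emph{entire} axis. If $x$ shifts the line $\dots,w_{-1},w_0,w_1,\dots$ by one step, then $x@w_m\in F$ must send the colour of $[w_{m-1},w_m]$ to that of $[w_m,w_{m+1}]$ \emph{and} the colour of $[w_m,w_{m+1}]$ to that of $[w_{m+1},w_{m+2}]$, for every $m$; so the whole bi-infinite colour sequence must be compatible with local actions in $F$, and this is exactly where the hypothesis that $i,j$ lie in one $F$-orbit enters. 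The paper's missing idea: pick $\tau\in F$ with $\tau j=i$, note $\tau^l j=j$ for some $l$, colour a line periodically by the $\tau$-orbit $(j,i,\tau^2 j,\dots,\tau^{l-1}j)$, and let $x$ translate it by one step acting as $\tau$ along the axis; choosing $v$ with $c_0=i$, $c_1=j$ gives $s(x)=|F_i j|$. (Your normalisation $c_0=j$, $c_1=i$ would give $|F_j i|$; this happens to equal $|F_i j|$ since $i,j$ are in one orbit, so $|F_i|=|F_j|$ and both suborbit lengths equal $|F_i:F_i\cap F_j|$, but you neither arrange the paper's choice nor observe this.)

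In the reverse inclusion of (iii), the recipe ``a colour-preserving element of translation length $n$, e.g.\ along an alternately two-coloured line'' fails for every odd $n$, and irreparably for $n=1$: a colour-preserving translation by $n$ forces $c_{m+n}=c_m$ along the axis, which for an alternating pattern is false when $n$ is odd, and for $n=1$ would force two consecutive edges to share a colour, contradicting properness. Equivalently, in the free product $U(1_{\bf k})$ of $k$ involutions the cyclically reduced words of length $1$ are edge inversions, hence elliptic, so no colour-preserving element has scale $(k-1)^1$. For $n\ge 2$ your idea is fine after replacing the alternating pattern by any cyclically reduced periodic pattern (possible since $k\ge 3$), but the value $k-1$ itself must come from part (ii) -- $2$-transitivity puts any $i\neq j$ in one orbit -- which is precisely how the paper closes this case.
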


\begin{proof}(i) Let $\Pi$ be a line whose edges are alternately coloured $i$ and $j$.  Then there is a colour-preserving automorphism $x$ (unique up to taking its inverse) such that $x\Pi = \Pi$ and $x|_\Pi$ is a translation of distance $2$.  The scale value follows from the formula.

(ii) We may assume $i \not = j$, since if $i=j$ then $|F_i j| = 1 = s(1)$.  Let $\tau \in F$ be such that $\tau j = i$, and suppose $\tau^l j = j$ for some $l$.  Let $\Pi$ be a line whose edge colours cycle through $(j, i, \tau^2 j, \dots, \tau^{l-1}j)$.  Then there exists $x \in U(F)$ such that $x\Pi = \Pi$ and $x|_\Pi$ is a translation of distance $1$.  We choose $x$ and $v \in \Pi$ such that the edge from $v$ to $xv$ has colour $i$ and the edge from $x^{-1}v$ to $v$ has colour $j$, giving the required expression for the scale.

(iii) All suborbits of $F$ have length $k-1$, so every value taken by the scale is a power of $k-1$, and the value $k-1$ itself occurs by (ii).\end{proof}

The structure of $U(F)_{(p)}$ is more difficult to describe geometrically than $U(F)$ itself, since $U(F)_{(p)}$ is usually not compactly generated and may not even be $\sigma$-compact.  Notice however that $U(F)_{(p)}$ has an open subgroup topologically isomorphic to $U(F(p))$, so in light of Lemma \ref{wilsclem} we see that the scales of elements of $U(F(p))$ (which we can determine easily using Proposition \ref{tidysgp}) form a subset of the values of the scale on $U(F)_{(p)}$.  This is already enough to show that the $p$-part of the scale on $G$ does not determine the scale on $G_{(p)}$.

\begin{prop}\label{symscaleval}Let $F = \Sym({\bf k})$ and let $p$ be a prime.  Let $s$ be the scale on $U(F) = \Aut(T)$ and let $s'$ be the scale on $U(F(p))$.  Let $\mcS$ be the set of those integers $e$ such that $p^e$ occurs as the $p$-part of $s(x)$ for $x \in U(F)$.  Let $\mcT$ be the set of those integers $e$ such that $s'(y)=p^e$ for some $y \in U(F(p))$.
\begin{enumerate}[(i)]
\item If $k \le p$ then $\mcT = \{0\}$.
\item If $p > 2$ and $k = 2p$ then $\mcT = 2\bN_0$.
\item If $p > 3$ and $k = rp$ for $3 \le r < p$ then $\mcT = \bN_0 \setminus \{1\}$.
\item In all other cases $\mcT = \bN_0$.
\item For all values of $k$ ($k \ge 3$) we have $\mcS = e\bN_0$ where $p^e$ is the largest power of $p$ dividing $k-1$.  In particular, if $k \not\equiv 1 \mod p$ then $\mcS = \{0\}$.
\end{enumerate}\end{prop}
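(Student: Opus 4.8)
\section*{Proof proposal for Proposition~\ref{symscaleval}}

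The plan is to compute both scales via the product formula of Proposition~\ref{tidysgp}, applied to $U(F) = \Aut(T)$ for $\mcS$ and to $U(F(p))$ for $\mcT$, and then to read off the answers from the suborbit structure of $F(p)$ acting on ${\bf k}$. This immediately settles (v): since $\Sym({\bf k})$ is $2$-transitive, Corollary~\ref{scaleval}(iii) gives $s(U(F)) = \{(k-1)^n : n \in \bN_0\}$, so $s(x)_p$ runs exactly over $\{((k-1)_p)^n : n \in \bN_0\} = \{p^{en} : n \in \bN_0\}$, where $p^e$ is the largest power of $p$ dividing $k-1$; hence $\mcS = e\bN_0$.

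For (i)--(iv), write $k = \sum_{i \ge 0} d_i p^i$ in base $p$. Then $F(p)$ is conjugate in $\Sym({\bf k})$ to a direct product of iterated wreath products $W_i := C_p \wr \cdots \wr C_p$ ($i$ factors, acting on the $p^i$ leaves of a regular $p$-ary rooted tree of depth $i$), with $d_i$ copies of $W_i$ for each $i \ge 1$; correspondingly ${\bf k}$ splits into $F(p)$-orbits, namely $d_0$ fixed points together with $d_i$ orbits of size $p^i$ for $i \ge 1$. The group $F(p)$ preserves this partition (and, inside each orbit, the tree structure), though it may permute the sub-blocks of a large orbit. A direct computation with $W_i$ shows that, for a leaf $a$, the $\mathrm{stab}(a)$-orbit of a leaf $b$ has size $1$ when $b=a$ or $b$ is a sibling of $a$, and size $p^{j-1}$ when the last common ancestor of $a$ and $b$ has depth $i-j$ ($2 \le j \le i$); hence, letting $w(a,b)$ be the exponent of $p$ in $|F(p)_a b|$ for $a \ne b$, one has $w(a,b) = i'$ (with $p^{i'}$ the orbit size of $b$) whenever $a$ is a fixed point or $a, b$ lie in different orbits, while $w(a,b) \in \{0,1,\dots,i-1\}$ when $a,b$ lie in a common orbit of size $p^i$, with $w(a,b)=1$ exactly when their last common ancestor in that orbit has depth $i-2$. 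Proposition~\ref{tidysgp} applied to $U(F(p))$ then says: elliptic elements have $s'=1$, while a hyperbolic element whose axis is coloured $(d_j)_{j \in \bZ}$ and whose translation length is $\ell$ has $s' = p^W$ with $W = \sum_{i=1}^{\ell} w(d_{1-i}, d_{-i})$; moreover such an element exists iff for every $j$ there is $f_j \in F(p)$ with $f_j(d_{j-1}) = d_{j+\ell-1}$ and $f_j(d_j) = d_{j+\ell}$ (take $f_j = y@v_j$ and use that prescribed compatible local actions extend to an automorphism of $T$). In particular a periodic colouring of period $\ell$ is always realised by the colour-preserving translation, so every closed walk of length $\ell$ in the complete graph on ${\bf k}$ gives a scale value $p^W$, $W$ being the sum of $w$ along the walk; and since $s'(y^t)=s'(y)^t$ (equivalently, one may replace $\ell$ by $t\ell$ in the constructions), a single element of scale $p$ forces $\bN_0 \subseteq \mcT$.

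The case analysis follows. If $k \le p$ then $F(p)$ is trivial or regular of order $p$, all $w$ vanish, and $\mcT = \{0\}$: this is (i). If $k = rp$ with $r \ge 2$ (no fixed points, all orbits of size $p$), then $F(p) = (C_p)^r$ fixes each orbit setwise, so the realisability criterion forces the orbit-labels along any axis colouring to be $\ell$-periodic; thus $W$ is the number of orbit-changes in a closed walk of length $\ell$ in the complete graph $K_r$ on the orbits, a value that is $0$, or is $\ge 2$ (and even when $r=2$). Explicit periodic walks ($B_1 B_2 B_1 \cdots$, and, for $r \ge 3$, closed walks in $K_r$ of every length $\ge 2$) realise exactly these, yielding $\mcT = 2\bN_0$ when $r = 2$, $p > 2$ — this is (ii) — and $\mcT = \bN_0 \setminus \{1\}$ when $3 \le r < p$, $p > 3$ — this is (iii). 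All remaining cases have $k > p$ and either: $p \nmid k$ with $k < p^2$, so there is a fixed point $f$ and an orbit $B$ of size $p$, and the period-$2$ colouring $f,b,f,b,\dots$ ($b \in B$) has scale $p$ because $w(f,b)=1$ and $w(b,f)=0$; or $k \ge p^2$, so some orbit has size $p^i$ with $i \ge 2$, and, letting $\tau \in W_i \le F(p)$ cyclically shift the children of every depth-$(i-2)$ node while fixing everything else, the colouring $d_j := \tau^j(d_0)$ is proper, is realised by the element whose axis-local-action is $\tau$ everywhere, and has scale $p^{w(d_0,d_{-1})} = p$ since $d_0$ and $d_{-1}$ share a depth-$(i-2)$ ancestor but not a depth-$(i-1)$ one. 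As these remaining cases are precisely those not covered by (i)--(iii), we get $\mcT = \bN_0$, which is (iv).

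The main obstacle is the upper-bound half of (ii) and (iii): showing that when $k = rp$ the exponent $W$ of every hyperbolic element of $U(F(p))$ avoids $1$ entirely (and, when $r=2$, all odd values). The delicate point is that $F(p)$ genuinely does move the finer block systems inside a large orbit — which is exactly why scale $p^1$ reappears as soon as there is an orbit of size $\ge p^2$, or a fixed point together with an orbit of size $p$ — so the periodicity/parity argument must be carried out with the one partition $F(p)$ really fixes pointwise, the orbit partition (which $(C_p)^r$ fixes as a set of blocks); one must then verify, via the realisability criterion and the vanishing of $w$ inside orbits of size $p$, that no long non-periodic axis can evade the resulting closed-walk constraint.
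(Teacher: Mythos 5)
Your proposal is correct and follows essentially the same route as the paper: part (v) via Corollary~\ref{scaleval}(iii), part (i) from the triviality/regularity of $F(p)$, parts (ii)--(iii) from the formula of Proposition~\ref{tidysgp} together with the constraint that the $F(p)$-orbit labels along the axis are periodic (the paper's ``$c_0$ and $c_n$ lie in the same orbit''), and part (iv) by exhibiting an element of scale $p$ in the two remaining cases. Your explicit wreath-product/LCA computation of the suborbit sizes and the axis-realisability criterion just spell out in more detail what the paper delegates to Corollary~\ref{scaleval}(i)--(ii) and the invariant-partition observation for $k \ge p^2$.
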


\begin{proof}If $k < p$ then $F(p)$ is trivial; if $k=p$ then $F(p)$ acts regularly.  In either case, there are no suborbits of length greater than $1$, so $\mcT = \{0\}$ by Proposition \ref{tidysgp}.

For (ii) and (iii), $F(p)$ is elementary abelian of order $p^{k/p}$, acting independently on $k/p$ orbits of size $p$.  Let $y \in U(F(p))$ be hyperbolic and let $(c_0,\dots,c_n)$ be as in Proposition \ref{tidysgp} (ii).  We see that $s'(y) = p^l$ where $l$ is the number of pairs $(c_i,c_{i+1})$ that lie in different $F(p)$-orbits.  However, we have the further constraint that $c_0$ is in the same $F(p)$-orbit as $c_n$.  If there are exactly two $F(p)$-orbits, we can obtain any non-negative even number for $l$, but no odd numbers.  If there are three or more $F(p)$-orbits, then $l$ can be any non-negative integer except $1$.

For (iv) it suffices to show $p \in \mcT$.  We are left with cases (a) and (b) below:

\emph{(a) $p < k < p^2$ and $p$ does not divide $k$.}

In this case there exist $i,j \in {\bf k}$ such that $i$ is fixed by $F(p)$ and $|F(p)j|=p$.  By Corollary \ref{scaleval} (i) there is therefore $y \in U(1_{\bf k})$ such that $s'(y) = p$.

\emph{(b) $k \ge p^2$.}

In this case, there is an $F(p)$-invariant equivalence relation on ${ \bf k}$ with each equivalence class lying in a single $F(p)$-orbit, and such that at least one of the equivalence classes $X$ has size $p^2$.  Let $i \in X$.  Then $F(p)_i$ acts on $X$ intransitively, with $p$ orbits of size $1$ and $p-1$ orbits of size $p$.  Let $j$ be in an $F(p)_i$-orbit of size $p$.  We obtain an element $y \in U(F(p))$ such that $s'(y)=p$ as in Corollary \ref{scaleval} (ii).

Part (v) is immediate from Corollary \ref{scaleval} (iii).\end{proof}

\begin{cor}Let $p$ be a prime and let $k$ be an integer such that $k > p$ and $p$ does not divide $k-1$.  Let $G = U(\Sym(k))$.  Then all values of the scale on $G$ are coprime to $p$, but $G_{(p)}$ is not uniscalar.\end{cor}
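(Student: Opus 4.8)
The plan is to apply Proposition~\ref{symscaleval} and Corollary~\ref{scaleval} to the group $G = U(\Sym(k))$, separating the two claims. First I would establish that every value of the scale on $G$ is coprime to $p$. By Proposition~\ref{symscaleval}(v), the set $\mcS$ of exponents $e$ with $p^e \mid s(x)$ for some $x \in G$ equals $e_0\bN_0$, where $p^{e_0}$ is the largest power of $p$ dividing $k-1$. Since by hypothesis $p \nmid k-1$, we have $e_0 = 0$, so $\mcS = \{0\}$; that is, $s(x)_p = 1$ for every $x \in G$, which is exactly the statement that every scale value is coprime to $p$.

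Next I would show $G_{(p)}$ is not uniscalar by exhibiting an element of nontrivial scale. The key observation (already noted in the text before Proposition~\ref{symscaleval}) is that $G_{(p)} = U(F)_{(p)}$ has an open subgroup topologically isomorphic to $U(F(p))$, where $F(p)$ is a $p$-Sylow subgroup of $\Sym(k)$; hence by Lemma~\ref{wilsclem}(iii), the scale values of elements of $U(F(p))$ occur among the scale values of $G_{(p)}$. So it suffices to produce $y \in U(F(p))$ with $s'(y) > 1$, i.e.\ to show that the set $\mcT$ of Proposition~\ref{symscaleval} contains a nonzero integer. Since $k > p$, we are not in case (i) of that proposition, so one of (ii), (iii), (iv) applies; in each of those cases $\mcT$ contains at least one positive integer (indeed $\mcT \supseteq \{2\}$ in case (ii), $\mcT \ni 2$ in case (iii), and $p \in \mcT$ in case (iv)). Therefore there is $y \in U(F(p)) \le_o G_{(p)}$ with $s'(y) = p^e$ for some $e \ge 1$, and by Lemma~\ref{wilsclem}(iii) the scale $s_{(p)}$ on $G_{(p)}$ takes this value as well. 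Thus $G_{(p)}$ is not uniscalar.

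The argument involves no real obstacle, as both halves are immediate consequences of results proved earlier in the section; the only point requiring a moment's care is confirming that the hypotheses ($k > p$ and $p \nmid k-1$) place us outside case (i) of Proposition~\ref{symscaleval} and inside a case where $\mcT \ne \{0\}$, which is clear since case (i) requires $k \le p$. (One may note in passing that this also illustrates, as promised in the introduction, that $s(x)$ coprime to $p$ for all $x \in G$ does not force $G_{(p)}$ to be uniscalar, sharpening the converse direction of Corollary~\ref{scalesub}.)
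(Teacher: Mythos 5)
Your proposal is correct and follows essentially the same route as the paper: the paper leaves the corollary without an explicit proof precisely because it is immediate from Proposition~\ref{symscaleval}(v) (giving $\mcS=\{0\}$ when $p \nmid k-1$) together with parts (ii)--(iv) (giving $\mcT \neq \{0\}$ when $k>p$) and the remark before that proposition that $U(F(p))$ sits as an open subgroup of $G_{(p)}$, so its scale values occur in $G_{(p)}$ via Lemma~\ref{wilsclem}. Your added care in checking that the case analysis of $\mcT$ is exhaustive for $k>p$ is exactly the right verification.
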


\section{Locally virtually prosoluble groups}

Just as Sylow's theorem generalises to profinite groups, Hall's theorem on Sylow bases generalises to prosoluble groups, that is, groups that are inverse limits of finite soluble groups.

\begin{defn}Let $G$ be a prosoluble group (that is, an inverse limit of finite soluble groups).  A \emph{Sylow basis} of $G$ is a set $\Sigma$ of subgroups containing one $p$-Sylow subgroup for each prime $p$ such that the elements of $\Sigma$ are pairwise permutable.  A \emph{local Sylow basis} of a locally virtually prosoluble group is a Sylow basis of some open prosoluble subgroup.\end{defn}

\begin{thm}[part of Hall's theorem for prosoluble groups: see \cite{Bol} Theorems 13 and 14]Let $G$ be a prosoluble group.  Then $G$ has a Sylow basis, and all Sylow bases of $G$ are conjugate in $G$.\end{thm}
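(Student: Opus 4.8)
The plan is to derive both assertions from the classical Hall theorem for finite soluble groups (every finite soluble group has a non-empty finite set of Sylow bases, and any two are conjugate) by a standard inverse limit argument. Let $N$ range over the open normal subgroups of $G$, so $G = \varprojlim_N G/N$ with each $G/N$ finite soluble; write $\pi_N : G \to G/N$ and $\pi_{N,M} : G/M \to G/N$ for $M \le N$. Two elementary facts about finite soluble groups drive everything. First, if $\phi : A \twoheadrightarrow B$ is a surjection of finite soluble groups and $\Sigma$ is a Sylow basis of $A$, then $\phi(\Sigma) := \{\phi(S) \mid S \in \Sigma\}$ is a Sylow basis of $B$: the image of a Sylow $p$-subgroup is a Sylow $p$-subgroup of $B$, and $\phi(S)\phi(T) = \phi(ST)$ is a subgroup whenever $ST$ is. Second, for a finite soluble group the set of Sylow bases is finite and non-empty, and the set of elements conjugating one Sylow basis to another is a coset of a subgroup, hence non-empty and finite. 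These are exactly what is needed to form inverse systems of non-empty finite sets and use the fact that such systems have non-empty inverse limit.

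\emph{Existence.} Let $\mcB(N)$ be the set of Sylow bases of $G/N$; by the first fact each $\pi_{N,M}$ induces a map $\mcB(M) \to \mcB(N)$, so $(\mcB(N))_N$ is an inverse system of non-empty finite sets and admits a compatible family $(\Sigma_N)_N$. For each prime $p$ let $S_{p,N} \in \Sigma_N$ be the $p$-Sylow member; compatibility of the $\Sigma_N$ forces $\pi_{N,M}(S_{p,M}) = S_{p,N}$, so $S_p := \varprojlim_N S_{p,N}$ is a closed pro-$p$ subgroup of $G$ with $\pi_N(S_p) = S_{p,N}$ for all $N$; consequently $|G/N : \pi_N(S_p)|$ is coprime to $p$ for every $N$, whence $|G:S_p|_p = 1$ and $S_p$ is a $p$-Sylow subgroup of $G$ in the sense of the paper. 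To see that $\Sigma := \{S_p \mid p \in \bP\}$ is pairwise permutable, fix distinct primes $p,q$: in every $G/N$ we have $S_{p,N}S_{q,N} = S_{q,N}S_{p,N}$, a subgroup, and for $x$ in the closed subset $\varprojlim_N S_{p,N}S_{q,N}$ of $G$ the sets $\{(a,b) \in S_{p,N} \times S_{q,N} \mid ab = \pi_N(x)\}$ form an inverse system of non-empty finite sets, so $x$ has a factorisation in $S_p S_q$; thus $S_p S_q = \varprojlim_N S_{p,N}S_{q,N} = S_q S_p$. Hence $\Sigma$ is a Sylow basis of $G$.

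\emph{Conjugacy.} Given Sylow bases $\Sigma, \Sigma'$ of $G$, the images $\Sigma_N := \pi_N(\Sigma)$ and $\Sigma'_N := \pi_N(\Sigma')$ are Sylow bases of $G/N$ by the first fact, hence conjugate; by the second fact the set $C_N \subseteq G/N$ of elements conjugating $\Sigma_N$ to $\Sigma'_N$ is non-empty and finite, and $(C_N)_N$ is an inverse system under the $\pi_{N,M}$. A point $g$ of its (non-empty) inverse limit satisfies $\pi_N(g) S_{p,N} \pi_N(g)^{-1} = S'_{p,N}$ in every $G/N$ for every $p$; passing to the inverse limit gives $g S_p g^{-1} = S'_p$ for each $p$, i.e. $g \Sigma g^{-1} = \Sigma'$.

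The one step that is not purely formal is the identity $S_p S_q = \varprojlim_N S_{p,N}S_{q,N}$ in the existence part: a priori only the inclusion $\subseteq$ is clear, and it is the reverse inclusion --- secured by the compactness argument on factorisations --- that upgrades permutability of the $S_{p,N}$ in each finite quotient to genuine permutability of the $S_p$ inside $G$. I expect this to be the main (indeed only) obstacle; the remainder is a routine combination of the finite Hall theorem with the non-emptiness of inverse limits of non-empty finite sets.
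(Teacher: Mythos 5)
Your argument is correct: both existence and conjugacy follow by the inverse-limit reduction to the finite Hall theorem, and the one non-formal point (that $S_pS_q=\varprojlim_N S_{p,N}S_{q,N}$, so permutability survives the limit) is handled properly by your compactness argument on factorisations. The paper itself gives no proof but cites Bolker's \emph{Inverse limits of solvable groups}, whose proof is essentially this same reduction, so your approach matches the cited source.
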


\begin{cor}\label{sylslcs}Given a \tdlc{} group $G$ that is locally virtually prosoluble, write $\Syls(G)$ for the set of all Sylow bases of prosoluble open compact subgroups of $G$.  Then $\Syls$ is a locally conjugate structure on the class of locally virtually prosoluble \tdlc{} groups.\end{cor}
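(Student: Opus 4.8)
The plan is to verify the three axioms of a locally conjugate structure for $\Syls$, mirroring the proof of Lemma \ref{sylplcs}. First I would establish the analogue of the result for prosoluble groups themselves, namely that setting $\mcL(G)$ to be the set of Sylow bases of a prosoluble group $G$ gives a locally conjugate structure on the class of prosoluble groups; then I would invoke Lemma \ref{lcslem} (applied with $\mcC$ the class of prosoluble groups and $\mcD$ the class of locally virtually prosoluble \tdlc{} groups) to transfer this to the full class. Compatibility with isomorphisms (axiom (i)) is immediate. Axiom (ii) for prosoluble $G$ is exactly the statement that all Sylow bases of $G$ are conjugate, which is the quoted part of Hall's theorem.

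The substantive point is axiom (iii): given an open subgroup $H$ of a prosoluble group $G$, every Sylow basis of $H$ should be commensurate to (the intersection with $H$ of) a Sylow basis of $G$, and vice versa. For one direction, let $\Sigma = \{S_p\}$ be a Sylow basis of $G$ and let $K$ be the core of $H$ in $G$, so $K$ is open and normal in $G$ and $K \cap H = K$ has finite index in $H$. Then $\{S_p \cap K\}$ should be a Sylow basis of $K$: each $S_p \cap K$ is a $p$-Sylow subgroup of $K$ (as in the proof of Lemma \ref{sylplcs}), and pairwise permutability is inherited since $(S_p \cap K)(S_q \cap K) \subseteq S_pS_q \cap K = S_qS_p \cap K$, with equality forced because the left side is a group of the right order by the Sylow-index computation. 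Since $S_p \cap K$ is commensurate to $S_p$, the set $\{S_p \cap K\}$ is commensurate to $\Sigma$; and as $K \le H$, a Sylow basis of $K$ lifts to a commensurate Sylow basis of $H$ by pushing each $S_p \cap K$ up to a $p$-Sylow subgroup of $H$ containing it and checking permutability is preserved on passing to the (finite-index) overgroups — here one uses that permutability of $p$- and $q$-Sylow subgroups is a finite-index-stable condition in a prosoluble group. The reverse direction is symmetric: a Sylow basis of $H$ intersected with the core of $H$ in $G$ gives a Sylow basis of that core, commensurate to one of $G$.

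The main obstacle I anticipate is the bookkeeping around permutability under commensuration: unlike the single-prime case, a Sylow basis carries the extra data that its members pairwise permute, and one must check that this property is not destroyed when replacing each member by a commensurate subgroup or by intersecting with an open normal subgroup. This should follow from the standard fact (implicit in Hall's theory, e.g.\ \cite{Bol}) that in a finite soluble group a product $S_pS_q$ of a $p$-subgroup and a $q$-subgroup is a subgroup precisely when $|S_pS_q| = |S_p||S_q|$, together with a limit argument over the inverse system defining the prosoluble group; passing to finite quotients reduces every permutability check to this order condition, which is manifestly stable under the commensurations in play. Once these routine verifications are in place, Lemma \ref{lcslem} delivers the statement.
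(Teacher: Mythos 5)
Your overall architecture is the same as the paper's (set up Sylow bases as a locally conjugate structure on prosoluble groups, with axiom (ii) given by Hall's theorem, and then apply Lemma \ref{lcslem} via the core of $H$ in $G$), but the lifting step in your verification of axiom (iii) has a genuine gap. You propose to pass from a Sylow basis $\{S_p \cap K\}$ of the open normal subgroup $K$ to a Sylow basis of $H$ by ``pushing each $S_p \cap K$ up to a $p$-Sylow subgroup of $H$ containing it'', on the grounds that permutability of Sylow subgroups is a ``finite-index-stable condition''. It is not: independently chosen overgroups of a permuting family need not permute, even in a finite soluble group. For example, take $H = C_7 \rtimes C_6$ and $K = C_7$; a Sylow basis of $K$ is $\{C_7,1,1\}$, but if you lift the trivial $2$- and $3$-components to Sylow $2$- and $3$-subgroups lying in two distinct Frobenius complements, their product is not a subgroup (an order-$6$ subgroup would be a complement containing both, while distinct complements meet trivially). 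So the lift must be coordinated, and your ``symmetric'' reverse direction suffers from the same defect since it again lifts from the core. Relatedly, the ``standard fact'' you invoke is misstated: for a $p$-subgroup and a $q$-subgroup with $p \neq q$ the equality $|S_pS_q| = |S_p||S_q|$ is automatic (the intersection is trivial), so it cannot characterise permutability; the correct criterion is $S_pS_q = S_qS_p$. In your intersection step the counting should run the other way: $S_pS_q \cap K$ is already a group whose $p$- and $q$-Sylow subgroups are $S_p \cap K$ and $S_q \cap K$, so the product set $(S_p\cap K)(S_q\cap K)$ exhausts it and is therefore a subgroup; that repairs the reduction-into-$K$ direction.

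The correct way to lift, and what the paper's terse assertion $\Xi \le_o \Sigma$ is encoding, is to use the conjugacy half of Hall's theorem to coordinate the choice. Given a Sylow basis $\Xi$ of an open subgroup $H$ of the prosoluble group $G$, take any Sylow basis $\Sigma$ of $G$; both $\Sigma \cap K$ and $\Xi \cap K$ are Sylow bases of the core $K$ (by the repaired intersection argument), so by conjugacy of Sylow bases of $K$ there is $k \in K$ with $k(\Sigma \cap K)k^{-1} = \Xi \cap K$. Then $k\Sigma k^{-1}$ is a Sylow basis of $G$ whose intersection with $K$ is $\Xi \cap K$, so it is commensurate to $\Xi$ componentwise (and for all primes not dividing the finite indices $|G:K|$, $|H:K|$ the components literally coincide, which gives the finiteness requirement in the definition of commensuration). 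With this replacing the ``push up independently'' step, your argument goes through and coincides with the paper's proof.
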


\begin{proof}Let $G$ be a prosoluble group.  Given an open subgroup $H$ of $G$ and a Sylow basis $\Xi$ of $H$, then $\Xi \le_o \Sigma$.  Conversely, given any Sylow basis $\Sigma$ of $G$, then $\Sigma \cap K$ is a Sylow basis of $K$, where $K$ is the core of $H$ in $G$, and so $\Sigma$ is commensurate to a Sylow basis of $H$.  Thus setting $\mcL(G)$ to be the set of Sylow bases of a prosoluble group $G$ gives a locally conjugate structure on prosoluble groups.  It follows from Lemma \ref{lcslem} that $\Syls$ is a locally conjugate structure on all locally virtually prosoluble \tdlc{} groups.\end{proof}

We can now proceed as before, considering $\Comm_G(\Sigma)$ where $\Sigma$ is a local Sylow basis in place of $\Comm_G(S)$ for $S$ a local Sylow subgroup.  We do not attempt to define a new topology for $\Comm_G(\Sigma)$, but we can nevertheless adapt parts of Theorems \ref{commdense} and \ref{pnormthm} to the present context.

The following analogue of Theorem \ref{commdense} is immediate:

\begin{thm}\label{commdense2}Let $G$ be a \tdlc{} group that is locally virtually prosoluble and let $\Sigma$ be a local Sylow basis of $G$.  Then $\Comm_G(\Sigma)$ is a dense subgroup of $G$, and the $G$-conjugacy class of $\Comm_G(\Sigma)$ does not depend on the choice of $\Sigma$.\end{thm}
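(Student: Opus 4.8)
The plan is to deduce Theorem \ref{commdense2} directly from the general machinery already developed for locally conjugate structures, exactly as Theorem \ref{commdense} was deduced in the $p$-Sylow case. The key observation is that Corollary \ref{sylslcs} establishes that $\Syls$ is a locally conjugate structure on the class of locally virtually prosoluble \tdlc{} groups, so the abstract results of Proposition \ref{lcsdense} apply verbatim with $\mcC$ taken to be the class of prosoluble \tdlc{} groups and $\mcL = \Syls$ (equivalently, $\mcL^*$ in the notation of Lemma \ref{lcslem}).

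Concretely, I would argue as follows. Let $G$ be locally virtually prosoluble and let $\Sigma \in \Syls(G)$, so by definition $\Sigma$ is a Sylow basis of some prosoluble open compact subgroup $U$ of $G$; thus $\Sigma \in \mcL(U)$ where $\mcL$ is the locally conjugate structure on prosoluble groups from the proof of Corollary \ref{sylslcs}. Density of $\Comm_G(\Sigma)$ in $G$ is then precisely Proposition \ref{lcsdense}(ii). For the statement that the $G$-conjugacy class of $\Comm_G(\Sigma)$ is independent of the choice of $\Sigma$: given a second local Sylow basis $\Sigma'$, it lies in $\mcL(V)$ for some prosoluble open compact subgroup $V$ of $G$, and Proposition \ref{lcsdense}(i) (applied with $W = G$, or with any chosen open compact subgroup) furnishes $w \in G$ such that $w\Sigma w^{-1}$ is commensurate to $\Sigma'$, whence $w\Comm_G(\Sigma)w^{-1} = \Comm_G(\Sigma')$.

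The one point requiring a small remark is that Proposition \ref{lcsdense} is stated for $G \in \mcC$, i.e. for $G$ prosoluble, whereas here $G$ is only \emph{locally} virtually prosoluble. This is handled by passing to $\mcL^* = \Syls$ via Lemma \ref{lcslem}: since $\mcD$ (the class of \tdlc{} groups with an open prosoluble subgroup) is exactly the class of locally virtually prosoluble \tdlc{} groups, and $\Syls$ is a locally conjugate structure on all of $\mcD$, the proof of Proposition \ref{lcsdense} goes through with $G \in \mcD$ and $\mcL$ replaced by $\Syls = \mcL^*$ — indeed the proof of that proposition only uses axioms (i)--(iii) of a locally conjugate structure, together with the existence of open compact subgroups, none of which requires $G$ itself to be prosoluble. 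So no genuinely new argument is needed.

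There is no real obstacle: the content is entirely contained in Corollary \ref{sylslcs} together with Proposition \ref{lcsdense}, and the proof is a one-line invocation of those. (This is precisely why the theorem is described as "immediate" in the text.) If anything, the only thing to be careful about is the harmless abuse whereby $\Comm_G(\Sigma)$ denotes the group of $g \in G$ that commensurate the \emph{set} $\Sigma$ in the sense of the earlier definition — permuting its members and fixing all but finitely many up to commensurability — rather than normalising each member; but this is exactly the notion used throughout Section 2, so consistency with Proposition \ref{lcsdense} is automatic.
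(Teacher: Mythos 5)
Your proposal is correct and matches the paper's (implicit) argument exactly: the paper states Theorem \ref{commdense2} as immediate from Corollary \ref{sylslcs} together with Proposition \ref{lcsdense}, which is precisely the invocation you make, including the correct handling of $\Comm_G(\Sigma)$ as the commensurator of a set of subgroups and the omission of any topological refinement at $\Sigma$.
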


Parts (ii) and (iv) of Theorem \ref{pnormthm} also have analogues in the present context.

\begin{thm}\label{pnormthm2}Let $G$ be a \tdlc{} group that is locally virtually prosoluble and let $\Sigma$ be a local Sylow basis of $G$.
\begin{enumerate}[(i)]
\item Let $H$ be a \tdlc{} group that is locally virtually pronilpotent.  Let $\phi: G \rightarrow H$ be a continuous homomorphism.  Then $\phi(\Comm_G(\Sigma)) = \phi(G)$.
\item Suppose that $G$ is first-countable and that there is an open compact subgroup $U$ of $G$ that has only finitely many maximal open normal subgroups.  Then the following are equivalent:
\begin{enumerate}[(a)]
\item $G$ is locally virtually pronilpotent;
\item $\Comm_G(\Sigma)=G$;
\item $\Comm_G(\Sigma)$ contains a dense normal subgroup of $G$.\end{enumerate}\end{enumerate}\end{thm}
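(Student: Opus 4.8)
The strategy is to mirror the proofs of parts (ii) and (iv) of Theorem \ref{pnormthm}, replacing Sylow's theorem for profinite groups with Hall's theorem for prosoluble groups, and replacing the role of the $p$-core $\O_p(U)$ by the pro-Fitting subgroup $\F(U)$ (equivalently $\O_\mcP(U)$ where $\mcP$ is the partition of all primes into singletons). The key structural fact to isolate at the outset is that a profinite group $U$ is virtually pronilpotent if and only if $\F(U)$ is open in $U$, and in that case $\F(U)$ is itself the unique open pronilpotent subgroup that is, up to finite index, the product of all the Sylow subgroups; we should also record that if $\Sigma$ is a Sylow basis of $U$ then $\prod_{S\in\Sigma}(S\cap\F(U))$ is open in $\F(U)$, and $\Comm_U(\Sigma)=U$ when $U$ is virtually pronilpotent (since each $S$ then contains an open subgroup of $\O_p(U)=\F(U)_p$, which is normal in $U$).

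\emph{Part (i).} Let $U$ be a prosoluble open compact subgroup of $G$ carrying the Sylow basis $\Sigma$, set $K=\phi(U)$, and pick an open compact subgroup $V$ of $H$ with $K\le V$. The image $\phi(\Sigma):=\{\phi(S)\mid S\in\Sigma\}$ is a Sylow basis of $K$. Since $H$ is locally virtually pronilpotent, $\F(V)$ is open in $V$; as $\F(V)\cap K$ is a normal pronilpotent subgroup of $K$, it lies in $\F(K)$, so $\F(K)$ is open in $K$ and $K$ is virtually pronilpotent. By the structural fact above, $\Comm_K(\phi(\Sigma))=K$. Pulling back, with $L:=\langle \bigcup_{S\in\Sigma} S\ker(\phi|_U)\rangle$ (more precisely the subgroup generated by the $S\ker(\phi|_U)$, which is prosoluble of finite index issue aside — here one uses that $\F(K)$ open forces $\Sigma\,\ker(\phi|_U)$ to be commensurate to $U$), one gets $\Comm_U(\Sigma)\ker(\phi|_U)=U$ by a conjugacy argument: for $u\in U$, $u\Sigma u^{-1}$ is a Sylow basis of $uLu^{-1}$, which is commensurate to $L$, so by Hall's conjugacy theorem applied inside $L$ there is $l\in L$ with $u\Sigma u^{-1}$ commensurate to $l\Sigma l^{-1}$, whence $u\in\Comm_U(\Sigma)L$. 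Finally $G=\Comm_G(\Sigma)U$ by Theorem \ref{commdense2}, so $\phi(G)=\phi(\Comm_G(\Sigma))$.

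\emph{Part (ii).} That (a) $\Rightarrow$ (b) is the special case $H=G$, $\phi=\mathrm{id}$ of part (i), and (b) $\Rightarrow$ (c) is trivial. For (c) $\Rightarrow$ (a), fix a dense normal subgroup $D\le\Comm_G(\Sigma)$ and an open compact subgroup $U$ of $G$ with $U/\M(U)$ finite. For $x\in D\cap U$ and an open subgroup $\Sigma'$ of $\Sigma$ (meaning a set $\{S'\le_o S\mid S\in\Sigma\}$ that is again a permutable family), set $X(\Sigma'):=\{u\in U\mid uxu^{-1}\text{ normalises each }S'\in\Sigma'\}$; each is closed, and their union over all such $\Sigma'$ is $U$ because $uxu^{-1}\in\Comm_U(\Sigma\cap U)$, so (arguing componentwise with Corollary \ref{normprop} on each Sylow piece) $uxu^{-1}$ normalises an open subgroup of each $S$. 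By Baire category some $X(\Sigma')$ has interior, so contains $uV$ for an open normal $V\unlhd U$; conjugating, we may take $u=1$, and shrinking $V$ we may assume each $S'\cap V$ is a Sylow basis of $V$. Then $vxv^{-1}$ normalises each $S'\cap V$ for all $v\in V$, so by Hall's conjugacy theorem $x$ normalises \emph{every} Sylow basis of $V$. Define $R(V)$ to be the intersection of the normalisers of all Sylow bases of $V$; then $D\cap U\subseteq\bigcup_{V\unlhd_o U}R(V)$, and as $D\cap U$ is dense and $\M(U)$ open, finitely many $R(V_1),\dots,R(V_n)$ cover all cosets of $\M(U)$. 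Put $V=\bigcap V_i$; since every Sylow basis of $V$ is obtained by intersecting $V$ with Sylow bases of the $V_i$, we get $R(V)\ge R(V_i)$ for all $i$, so $U=R(V)\M(U)$, and as $R(V)$ is closed normal it is not inside any maximal open normal subgroup, hence $R(V)=U$. In particular $R(V)\ge V$, so $V$ normalises each of its own Sylow subgroups, i.e.\ $V$ is pronilpotent, proving (a).

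\emph{Main obstacle.} The delicate point is the bookkeeping around ``open subgroups of a Sylow basis'' and the componentwise application of Corollary \ref{normprop}: a relatively compact set in the localisation normalises an open subgroup of a \emph{single} compact group, and here we must simultaneously control one pro-$p$ group for each of infinitely many $p$. One resolves this by working inside the single open compact subgroup $\prod_{S\in\Sigma}S$ (or its normal core in $U$) and noting that normalising an open subgroup of this product is equivalent to normalising an open subgroup of each factor; since only finitely many factors are ``large'' relative to any prescribed neighbourhood and $\Sigma'$ ranges over a countable family, the Baire argument goes through. Everything else is a faithful transcription of the profinite-case proof with ``$p$-Sylow subgroup'' replaced by ``Sylow basis'' and ``$\O_p$'' by ``$\F$''.
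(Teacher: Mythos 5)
Part (i) of your proposal has a genuine gap. Your reduction rests on the auxiliary subgroup $L=\langle\bigcup_{S\in\Sigma}S\ker(\phi|_U)\rangle$, but the closed subgroup generated by the members of a Sylow basis of a prosoluble group $U$ is $U$ itself (its image in every finite quotient is a full Sylow system, whose product is the whole quotient), so $L$ is (dense in) $U$ and the conclusion ``$u\in\Comm_U(\Sigma)L$'' carries no information; moreover the step you need at the end, $L\subseteq\Comm_U(\Sigma)\ker(\phi|_U)$, is false in general, because an element of one basis member need not commensurate the others: in $\prod_{n\in\bN}\Sym(3)$, conjugation by an element of the pro-$3$ member with infinitely many non-trivial coordinates does not even commensurate the pro-$2$ member. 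The deeper point is that Hall conjugacy, applied prime by prime, yields a conjugating element depending on $p$, whereas membership in $\Comm_U(\Sigma)$ requires a single element normalising $S_p$ exactly for all but finitely many $p$ simultaneously; this simultaneity is not a formal consequence of conjugacy. The paper supplies it via Lemma \ref{hallfrat}: if $K$ is a closed normal subgroup of a prosoluble $U$ with $U/K$ pronilpotent, then $U=\N_U(\Sigma)K$ — this is P.~Hall's theorem that system normalisers cover nilpotent quotients, transferred to the profinite setting by a compactness argument. With that lemma part (i) is immediate (shrink $U$ so $\phi(U)$ is pronilpotent, apply the lemma with $K=\ker\phi\cap U$, and use $G=\Comm_G(\Sigma)U$ from Theorem \ref{commdense2}); your proposal neither proves nor cites anything of this strength, so part (i) is not established.

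Part (ii) follows the paper's route in outline, but your Baire step is taken over the family of all tuples $\Sigma'=\{S'\le_o S\mid S\in\Sigma\}$, which is uncountable (one choice for each of infinitely many primes), so the Baire category theorem does not apply as stated. Your ``main obstacle'' remark does not repair this: the claim that normalising an open subgroup of $\prod_{S\in\Sigma}S$ is equivalent to normalising an open subgroup of each factor is false, since that product is the whole open prosoluble subgroup. The paper's fix exploits the finiteness built into the definition of commensuration: $uxu^{-1}$ normalises $S_p$ \emph{exactly} for all but finitely many $p$, so one uses the countable family $U_i=\{u\in U\mid uxu^{-1}\text{ normalises }S_p\text{ for all }p\ge i\}$, and then deals with the finitely many primes $p<i$ not by Baire but by invoking the single-prime Theorem \ref{pnormthm}(iv) (applicable because $D\le\Comm_G(\Sigma)\le\Comm_G(S_p)$) to replace $V$ by an open normal subgroup that is $p$-normal for each $p<i$. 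With that correction the remainder of your argument (the subgroups $R(V)$, covering the cosets of $\M(U)$, and $R(V)=U$ forcing $V$ pronilpotent) matches the paper.
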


\begin{lem}\label{hallfrat}Let $U$ be a prosoluble group and let $\Sigma$ be a Sylow basis for $U$.  Let $K$ be a closed normal subgroup of $U$ such that $U/K$ is pronilpotent.  Then $U = \N_U(\Sigma)K$.\end{lem}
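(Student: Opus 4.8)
The plan is to reduce the statement to finite soluble groups by passing to finite quotients, and there to invoke the classical fact that Sylow bases with a common image in a quotient are already conjugate under the kernel of that quotient. Throughout, write $\N_U(\Sigma)$ for $\bigcap_{S\in\Sigma}\N_U(S)$, a closed (hence compact) subgroup of $U$.

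\textbf{Reduction to finite quotients.} Since $\N_U(\Sigma)$ and $K$ are closed in the compact group $U$, it suffices to produce, for each $g\in U$, an element of $\N_U(\Sigma)$ lying in $gK$. Fix an open normal subgroup $M\unlhd U$. Then $\Sigma M/M:=\{SM/M:S\in\Sigma\}$ is a Sylow basis of the finite soluble group $U/M$: each $SM/M$ is a $p$-group and $|U/M:SM/M|=|U:SM|$ is coprime to $p$ because it divides $|U:S|$, and the $SM/M$ are pairwise permutable because the $S$ are. Moreover $KM/M\unlhd U/M$ and $(U/M)/(KM/M)\cong U/KM$ is a quotient of the pronilpotent group $U/K$, hence nilpotent. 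Granting the finite case of the lemma, $U/M=\N_{U/M}(\Sigma M/M)\,(KM/M)$; equivalently $U=C_M K$, where $C_M\leq U$ is the closed preimage of $\N_{U/M}(\Sigma M/M)$. Each $C_M$ contains $\N_U(\Sigma)$, and since every $S\in\Sigma$ equals $\bigcap_M SM$ one checks that $\bigcap_M C_M=\N_U(\Sigma)$; the family $\{C_M\}_M$ is also directed downwards. Hence for a fixed $g$ the sets $gK\cap C_M$ are non-empty, closed and directed downwards, so by compactness of $U$ their intersection $gK\cap\N_U(\Sigma)$ is non-empty, as required.

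\textbf{The finite case.} Let $G$ be finite soluble, $\Sigma$ a Sylow basis and $N\unlhd G$ with $G/N$ nilpotent; we must show $G=\N_G(\Sigma)N$. The key point is that \emph{if $\Sigma_1,\Sigma_2$ are Sylow bases of $G$ with $\Sigma_1 N/N=\Sigma_2 N/N$, then $\Sigma_2=\Sigma_1^{\,n}$ for some $n\in N$.} Granting this, $\Sigma^g N/N=\Sigma N/N$ for every $g\in G$ (in the nilpotent group $G/N$ every Sylow subgroup is normal, so the unique Sylow basis $\Sigma N/N$ is normalised by all of $G/N$), whence $\Sigma^g=\Sigma^n$ for some $n\in N$, so $gn^{-1}\in\N_G(\Sigma)$ and $g\in\N_G(\Sigma)N$. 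I would prove the key point by induction on $|N|$, the case $N=1$ being trivial. Choose a minimal normal subgroup $L\leq N$ of $G$; by solubility $L$ is an elementary abelian $p_0$-group for some prime $p_0$. Write $S_q^{(i)}$ for the $q$-part of $\Sigma_i$. The images $\Sigma_i L/L$ in $G/L$ have a common image in $(G/L)/(N/L)$, so by induction some $L$-conjugate of $\Sigma_2$ agrees with $\Sigma_1$ modulo $L$; after this adjustment $S_{p_0}^{(1)}=S_{p_0}^{(2)}$ since $L\leq\O_{p_0}(G)$ lies in both, and the Hall $p_0'$-subgroups $\prod_{q\neq p_0}S_q^{(i)}$ are complements to $L$ in a common overgroup, hence become equal after a further $L$-conjugation — complements to an abelian normal subgroup are conjugate under that subgroup (write a conjugating element as $c\ell$ with $c$ in a complement and $\ell\in L$, and note $c$ normalises that complement). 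Finally, with $H:=\prod_{q\neq p_0}S_q^{(1)}=\prod_{q\neq p_0}S_q^{(2)}$, the Sylow bases $\{S_q^{(1)}:q\neq p_0\}$ and $\{S_q^{(2)}:q\neq p_0\}$ of $H$ have a common image under the isomorphism $H\to HL/L$, hence coincide. Thus $\Sigma_1=\Sigma_2$, and unwinding the conjugations the original $\Sigma_2$ is an $N$-conjugate of $\Sigma_1$.

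\textbf{Main obstacle.} The one substantive step is the key point: Hall's theorem already gives that $\Sigma_1,\Sigma_2$ are $G$-conjugate, and the work lies in arranging the conjugating element inside the kernel $N$ (done here by peeling off minimal normal subgroups and using conjugacy of complements). The two reductions flanking it are routine, although in the passage to finite quotients one should check carefully that the groups $C_M$ intersect in $\N_U(\Sigma)$ rather than in something larger — this is where closedness of the members of $\Sigma$ is used.
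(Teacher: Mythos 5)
Your proposal is correct in substance, but it takes a genuinely different route from the paper at the decisive finite step. Your reduction to finite quotients is essentially the paper's own (the paper compresses it into the phrase ``a standard compactness argument''; your version via the closed sets $gK\cap C_M$, the directedness of the family $\{C_M\}$, and the identity $\bigcap_M C_M=\N_U(\Sigma)$, which does indeed rest on the members of $\Sigma$ being closed, is a correct way of spelling it out). For the finite case the paper simply cites P.~Hall's classical theorem that the normaliser of a Sylow basis of a finite soluble group covers the central chief factors, hence covers every nilpotent quotient; you instead prove from scratch the relative conjugacy statement that two Sylow bases with the same image modulo $N$ are conjugate by an element of $N$, by induction on $|N|$ through a minimal normal subgroup $L\le N$. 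Your route is self-contained, needing only Hall/Sylow conjugacy and conjugacy of complements of coprime order, at the cost of length; the paper buys brevity by outsourcing the group theory to Hall's 1937 result.

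Two phrases in your inductive step need repair, though neither damages the argument. First, the inductive hypothesis applied to $G/L$ with kernel $N/L$ produces a conjugating element in $N/L$, not in $L/L$, so ``some $L$-conjugate of $\Sigma_2$'' should read ``some $N$-conjugate''; this is harmless, since all that matters at the end is that the accumulated conjugator lies in $N$. Second, the parenthetical justification ``complements to an abelian normal subgroup are conjugate under that subgroup'' is false as a general statement (in $C_p\times C_p$ the complements to a fixed direct factor are pairwise non-conjugate); what makes the conclusion true here is coprimality: the two Hall $p_0'$-subgroups are complements to the abelian normal $p_0$-group $L$ in the common overgroup $X=H_1L$ and have order prime to $|L|$, so they are conjugate in $X$ by Schur--Zassenhaus (or by Hall's theorem applied to the soluble group $X$), and your device of writing the conjugator as $c\ell$ with $c$ in one complement and $\ell\in L$ then moves it into $L$. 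With these corrections the induction closes as you describe; in particular your observation that $L\le\O_{p_0}(G)\le S_{p_0}^{(i)}$ is exactly what guarantees that the final $L$-conjugation does not disturb the already matched Sylow $p_0$-subgroups.
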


\begin{proof}Given a prime $p$, write $S_p$ for the $p$-Sylow subgroup of $U$ contained in $\Sigma$.  Since $\N_U(\Sigma)$ is a closed subgroup of $U$, it suffices to prove the result in the case that $K$ is open in $U$.  Given an open normal subgroup $N$ of $G$, let $\Sigma_N = \{SN \mid S \in \Sigma\}$.  Then by a standard compactness argument, to show $U = \N_U(\Sigma)K$ it suffices to show $U = \N_U(\Sigma_N)K$ for every open normal subgroup $N$ of $U$.  This clearly reduces to proving the finite case of the lemma, which is a classical result.  Indeed, it is a theorem of P. Hall (\cite{Hal}) that in a finite soluble group $U$, the normaliser of a Sylow basis covers the central chief factors of $U$ (and only these chief factors), so in particular, the normaliser of a Sylow basis covers every nilpotent image.\end{proof}

\begin{proof}[Proof of Theorem \ref{pnormthm2}](i) Note that the choice of $\Sigma$ is not important, because all local Sylow basis commensurators in $G$ are conjugate. Let $U$ be an open prosoluble subgroup of $G$.  Let $M = \phi(U)$ and let $K = \ker\phi \cap U$.  The hypothesis on $H$ ensures that $M$ is virtually pronilpotent.  By replacing $U$ with a suitable open subgroup, we may assume $M$ is actually pronilpotent.  Let $\Sigma$ be a Sylow basis for $U$.  By Lemma \ref{hallfrat}, we have $U = \N_U(\Sigma)K$.  Since $G=\Comm_G(\Sigma)U$ by Theorem \ref{commdense2}, we have $G = \Comm_G(\Sigma)\ker\phi$, that is, $\phi(G) = \phi(\Comm_G(\Sigma))$.

(ii) The fact that (a) implies (b) is just a special case of part (i), and it is obvious that (b) implies (c).

Suppose that $\Comm_G(\Sigma)$ contains the dense normal subgroup $D$ of $G$.  Fix $x \in D \cap U$.  Given $i \in \bN$, let $U_i$ consist of those elements $u \in U$ such that $uxu^{-1}$ normalises the local $p$-Sylow subgroup contained in $\Sigma$ for all $p \ge i$.  Since $uxu^{-1} \in \Comm_G(\Sigma)$ for all $u \in U$, we have $\bigcup_{i \in \bN}U_i = U$; clearly each $U_i$ is closed.  Thus by the Baire category theorem, there is some $i$ such that $U_i$ has non-empty interior and hence contains a coset of an open normal subgroup of $U$.  By the same argument as used in the proof of Theorem \ref{pnormthm} (iv), we conclude that there is an open normal subgroup $V$ of $U$ such that $x$ normalises every $p$-Sylow subgroup of $V$ for all $p \ge i$.  By Theorem \ref{pnormthm} (iv), we see that $G$ is locally virtually $p$-normal for every prime $p$, since $\Comm_G(S)$ contains $D$ for all $S \in \Sigma$.  Thus by intersecting $V$ with open normal subgroups of $U$ that are $p$-normal, one for each $p < i$, we may assume that $V$ is $p$-normal for all $p < i$, so that $x$ normalises every Sylow subgroup of $V$.

Given an open normal subgroup $V$ of $U$, define $R(V)$ to be the intersection of the normalisers of all Sylow subgroups of $V$.  By the previous argument, we have $D \cap U \subseteq \bigcup_{V \unlhd_o U} R(V)$.  As in the proof of Theorem \ref{pnormthm} (iv), we obtain an open normal subgroup $V$ of $U$ such that $R(V) = U$.  In particular, every Sylow subgroup of $V$ is normal, which means that $V$ is pronilpotent, proving (a).\end{proof}

Given a Sylow basis $\Sigma$, one can define an aggregate scale for elements of $\Comm_G(\Sigma)$ by taking the product of the scales in each $p$-localisation:
\[ s_*(x) := \prod_{p \in \bP} s_{(p)}(x).\]
(Here $\bP$ is the set of all prime numbers, and the localisation is taken at the local $p$-Sylow subgroup $S$ contained in $\Sigma$, so that $\Comm_G(\Sigma) \le \Comm_G(S)$.)

We obtain an analogue of Theorem \ref{locscale}.

\begin{thm}\label{sysscale}Let $G$ be a locally virtually prosoluble \tdlc{} group and let $\Sigma$ be a local Sylow basis of $G$.  Let $x \in G$.
\begin{enumerate}[(i)]
\item If $x \in \Comm_G(\Sigma)$, then $s_*(x)$ is a finite multiple of $s(x)$.
\item Let $U$ be a tidy subgroup for $x$ in $G$.  Then there exists $u \in U$ such that $xu$ commensurates $\Sigma$ and such that $s(x)=s(xu) = s_*(xu)$.\end{enumerate}\end{thm}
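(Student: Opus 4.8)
The plan is to deduce everything from Theorem \ref{locscale}: part (i) is a direct consequence of Theorem \ref{locscale}(i) once one knows $s_*$ is finite, and part (ii) is the ``all primes at once'' version of Theorem \ref{locscale}(ii), with Hall's theorem on conjugacy of Sylow bases doing the work of Sylow's theorem. For part (i): since $x\in\Comm_G(\Sigma)$ it lies in $\Comm_G(S_p)$ for every $p$, so each $s_{(p)}(x)$ is defined, and by the very definition of a commensuration $xS_px^{-1}=S_p$ for all but finitely many $p$. I would first observe that $s_{(p)}(x)$ is always a power of $p$: by Lemma \ref{wilsclem}(iii) the pro-$p$ subgroup $S_p$ of $G_{(p)}$ contains a subgroup $W$ tidy for $x$ in $G_{(p)}$, and then $s_{(p)}(x)=|xWx^{-1}:xWx^{-1}\cap W|$ is the index of an open subgroup of the pro-$p$ group $xWx^{-1}$. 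Next, whenever $xS_px^{-1}=S_p$ one has $s_{(p)}(x)\le|xS_px^{-1}:xS_px^{-1}\cap S_p|=1$, so $s_{(p)}(x)=1$ for all but finitely many $p$, and $s_*(x)=\prod_p s_{(p)}(x)$ is a well-defined natural number. Finally Theorem \ref{locscale}(i) gives $s(x)_p\le s_{(p)}(x)$, and as both sides are powers of $p$ this yields $s(x)_p\mid s_{(p)}(x)$; multiplying over all primes, $s(x)=\prod_p s(x)_p$ divides $\prod_p s_{(p)}(x)=s_*(x)$, so $s_*(x)$ is a finite multiple of $s(x)$.

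\smallskip
For part (ii), by (\cite{Wil1}, Theorem 3) the value $s(x)$ and the tidiness of $U$ are unchanged on replacing $x$ by $xu$ with $u\in U$, so it suffices to produce a single $u\in U$ such that $xu$ commensurates $\Sigma$ and $s_{(p)}(xu)=s(x)_p$ for every prime $p$; indeed part (i) together with Theorem \ref{locscale}(i) then gives $s_*(xu)=\prod_p s_{(p)}(xu)=\prod_p s(x)_p=s(x)=s(xu)$. I make two preliminary reductions. First, since $\Comm_G(\Sigma)$ is dense in $G$ by Theorem \ref{commdense2} and $xU$ is open, replace $x$ by $xu_0$ for a suitable $u_0\in U$ so that $x\in\Comm_G(\Sigma)$. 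Second, I would arrange that $U$, and hence $Q:=U\cap xUx^{-1}$, is prosoluble; this is where the hypothesis on $G$ enters (one intersects the given $U$ with a prosoluble tidy subgroup, using that the intersection of tidy subgroups is tidy, (\cite{Wil1}, Lemma 10)), and any $u$ found inside the smaller group still lies in the original $U$.

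\smallskip
With $U$ prosoluble and $x\in\Comm_G(\Sigma)$, put $Q=U\cap xUx^{-1}$, an open prosoluble compact subgroup with $|xUx^{-1}:Q|=s(x)$ since $U$ is tidy. Following the proof of Theorem \ref{locscale}(ii): choose a Sylow basis $\{R_p\}$ of $Q$ whose components are commensurate to the $S_p$, and equal to $S_p$ for all but finitely many $p$ (possible because $x$ normalises $S_p$ and $S_p$ is a $p$-Sylow of $Q$ for cofinitely many $p$); extend it to Sylow bases $\{T_p\}$ of $U$ and $\{T_p^*\}$ of $xUx^{-1}$ with $R_p\le T_p$ and $R_p\le T_p^*$; then $x\{T_p\}x^{-1}$ and $\{T_p^*\}$ are Sylow bases of the prosoluble group $xUx^{-1}$, hence conjugate in it by Hall's theorem, which produces $u\in U$ with $xuT_p(xu)^{-1}=T_p^*$ for \emph{all} $p$ simultaneously. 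For each $p$ one has $xuT_p(xu)^{-1}\cap T_p=T_p^*\cap T_p=R_p$ (a pro-$p$ subgroup of $Q$ containing the $p$-Sylow $R_p$), so
\[ |xuT_p(xu)^{-1}:xuT_p(xu)^{-1}\cap T_p|=|T_p^*:R_p|=|xUx^{-1}:Q|_p=s(x)_p,\]
whence $s_{(p)}(xu)\le s(x)_p$; combined with Theorem \ref{locscale}(i), $s_{(p)}(xu)=s(x)_p$ for all $p$. The finite-index relations above also show that $xu$ commensurates each $T_p$, hence each $S_p$; and since $s(x)_p=|U:Q|_p=1$ and $T_p=S_p$ for all but finitely many $p$, in fact $xuS_p(xu)^{-1}=S_p$ for all but finitely many $p$, so $xu$ commensurates $\Sigma$, completing the argument modulo the bookkeeping in the choice of bases.

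\smallskip
The main obstacle --- the real difference from the single-prime case --- is exactly that bookkeeping: one needs the $p$-components of the Sylow bases of $Q$, $U$ and $xUx^{-1}$ to be commensurate to $S_p$ (so they are open compact subgroups of $G_{(p)}$ and the displayed index bounds $s_{(p)}$ and not a scale in some conjugate localisation), to coincide with $S_p$ for cofinitely many $p$ (so that $xu$ commensurates the \emph{set} $\Sigma$, not merely each member), and to reduce compatibly into $Q$ so that Hall's theorem applies to the right components. Here one exploits that $S_p\cap Q$ is commensurate to a $p$-Sylow of $Q$, that $S_p$ is already a $p$-Sylow of $U$, of $xUx^{-1}$ and of $Q$ for all but finitely many $p$ (because $x$ normalises $S_p$ for cofinitely many $p$), and that a finite partial system of pairwise permutable Sylow subgroups extends to a Sylow basis --- thereby upgrading the ad hoc use of Sylow conjugacy in Theorem \ref{locscale}(ii) to the systematic theory of Sylow bases. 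The reduction to a prosoluble tidy subgroup is the other step that needs care.
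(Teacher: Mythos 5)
Your argument is essentially the paper's own proof: part (i) is exactly the observation that $x$ normalises all but finitely many components of $\Sigma$ (so $s_{(p)}(x)=1$ cofinitely often) combined with Theorem \ref{locscale}(i), and part (ii) reduces by density of $\Comm_G(\Sigma)$ to $x\in\Comm_G(\Sigma)$ and then reruns the index computation of Theorem \ref{locscale}(ii) with a Sylow basis of $U\cap xUx^{-1}$ extended into $U$ and $xUx^{-1}$, Hall's conjugacy theorem supplying one $u\in U$ that works for all primes simultaneously. The differences are only in bookkeeping you make explicit and the paper leaves implicit (the $p$-power values of $s_{(p)}$ --- note Lemma \ref{wilsclem}(iii) as stated needs $xS_px^{-1}=S_p$, so this should be cited as the general fact about locally pro-$p$ groups rather than deduced from that lemma --- and the prosolubility of the subgroups to which Hall's theorem is applied).
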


\begin{proof}(i) From the definition of commensuration, we see that $x$ normalises all but finitely many elements of $\Sigma$, so $s_{(p)}(x) = 1$ for all but finitely many $p$; hence $s_*(x)$ is finite.  The fact that $s_*(x)$ is a multiple of $s(x)$ follows immediately from Theorem \ref{locscale} (i).

(ii) The equation $s(x)=s(xu)$ is automatic and we may assume $x \in \Comm_G(\Sigma)$, for the reasons given in the proof of Theorem \ref{locscale} (ii).

We see that $\Sigma \cap U \cap xUx^{-1} \le_o \Sigma_1$, where $\Sigma_1$ is a Sylow basis of $U \cap xUx^{-1}$.  In turn, $\Sigma_1 \le_o \Sigma_2$ and $\Sigma_1 \le_o \Sigma_3$ for Sylow bases $\Sigma_2$ and $\Sigma_3$ of $U$ and $xUx^{-1}$ respectively; by Hall's theorem, $\Sigma_3 = xu\Sigma_2 u^{-1}x^{-1}$ for some $u \in U$.  Thus for each $p \in \bP$, the $p$-Sylow subgroup $R$ in $\Sigma_1$ arises as $xuT(xu)^{-1} \cap T$ such that $T$ is the $p$-Sylow subgroup in $\Sigma_2$ and $xuT(xu)^{-1}$ is the $p$-Sylow subgroup in $\Sigma_3$.  In particular, both $T$ and $xu$ are contained in $\Comm_G(S)$, and $T$ is an open compact subgroup of $G_{(p)}$.  Thus we have
\[ |xuT(xu)^{-1}:xuT(xu)^{-1} \cap T| = |T^*:R| = |xUx^{-1}:xUx^{-1} \cap U|_p.\]
In particular, $s_{(p)}(xu) = s(xu)_p$ as in Theorem \ref{locscale} (ii).  This argument holds for every $p \in \bP$, so $s(xu) = s_*(xu)$.\end{proof}

\begin{rem}We make an observation analogous to Remark \ref{coprem} (ii): given $x \in \Comm_G(\Sigma)$ such that $x$ is uniscalar in $G$, then $x = yz$ where $y,z \in \Comm_G(\Sigma)$ and $s_*(y)=s_*(y^{-1})=s_*(z)=s_*(z^{-1})=1$.\end{rem}

One can define an `aggregate modular function' in the same way as the aggregate scale, but it turns out to be just the original modular function of $G$ restricted to $\Comm_G(\Sigma)$.  Indeed, this is clear from Proposition \ref{goodmod}: for all $x \in \Comm_G(\Sigma)$, we have
\[ \frac{s_*(x)}{s_*(x^{-1})} = \prod_{p \in \bP} \frac{s_{(p)}(x)}{s_{(p)}(x^{-1})} = \prod_{p \in \bP} \Delta_{(p)}(x) = \prod_{p \in \bP} \Delta(x)_p = \Delta(x).\]

\section{Acknowledgements}

My thanks go to Pierre-Emmanuel Caprace and George Willis for helpful discussions with them.

\end{document}